\documentclass[12pt,reqno]{amsart}

\usepackage{amsmath}
\PassOptionsToPackage{hyphens}{url}\usepackage[hidelinks,hypertexnames=false]{hyperref}

\usepackage[utf8]{inputenc}
\usepackage[british]{babel}
\usepackage[noabbrev, nameinlink]{cleveref}
\usepackage{amsthm}
\usepackage{amssymb}
\usepackage{mathrsfs}
\usepackage{indentfirst}
\usepackage{stmaryrd}
\usepackage{bbm}
\usepackage{xspace, xcolor}
\usepackage{thmtools}
\usepackage{thm-restate}
\usepackage{calc}
\usepackage{fullpage}

\makeatletter
\newcommand*{\ie}{i.e\@ifnextchar.{}{.\@\xspace}}
\makeatother
\makeatletter
\newcommand*{\whp}{\emph{w.h.p}\@ifnextchar.{}{.\@\xspace}}
\makeatother
\newcommand*{\Freiman}{Fre\u{\i}man\xspace}
\usepackage[shortlabels]{enumitem}

\newcommand{\E}{\mathbb{E}}
\newcommand{\F}{\mathbb{F}}
\newcommand{\R}{\mathbb{R}}
\newcommand{\Z}{\mathbb{Z}}
\newcommand{\N}{\mathbb{N}}
\newcommand{\Zmodn}{\Z_n}
\newcommand{\eps}{\varepsilon}
\newcommand{\cX}{\mathcal{X}}
\newcommand{\cF}{\mathcal{F}}
\newcommand{\cH}{\mathcal{H}}
\newcommand{\cZ}{\mathcal{Z}}
\newcommand{\cS}{\mathcal{S}}
\newcommand{\cP}{\mathcal{P}}
\newcommand{\cC}{\mathcal{C}}
\newcommand{\cQ}{\mathcal{Q}}
\newcommand{\cI}{\mathcal{I}}

\newcommand{\ccY}{\mathcal{Y}}
\newcommand{\pr}{\mathbb{P}}
\newcommand{\rsum}{\mathbin{\hat{+}}}
\newcommand{\dimF}{\dim_\mathrm{F}}
\newcommand{\rank}{\operatorname{rank}}
\newcommand{\Var}{\operatorname{Var}}
\newcommand{\Span}{\operatorname{span}}
\newcommand{\conv}{\operatorname{conv}}
\newcommand{\diam}{\operatorname{diam}}

\newcommand{\inner}[1]{\left\langle#1\right\rangle}
\newcommand{\equivc}[1]{\llbracket#1\rrbracket}
\newcommand{\oN}[1]{\overline{N}(#1)}
\renewcommand{\complement}[1]{#1^\mathsf{c}}

\crefname{prop}{proposition}{propositions}
\Crefname{prop}{Proposition}{Propositions}
\crefname{obs}{observation}{observations}
\Crefname{obs}{Observation}{Observations}
\crefname{lem}{lemma}{lemmas}
\Crefname{lem}{Lemma}{Lemmas}
\crefname{thm}{theorem}{theorems}
\Crefname{thm}{Theorem}{Theorems}
\crefname{defi}{definition}{definitions}
\Crefname{defi}{Definition}{Definitions}
\crefname{cor}{corollary}{corollaries}
\Crefname{cor}{Corollary}{Corollaries}

\title{On the independence number of \\ sparser random Cayley graphs}

\author{Marcelo Campos \and Gabriel Dahia \and Jo\~ao Pedro Marciano}

\address{IMPA, Estrada Dona Castorina 110, Jardim Bot\^anico, Rio de Janeiro, 22460-320, Brasil}
\email{\{gabriel.dahia, joao.marciano\}@impa.br}

\address{Department of Pure Mathematics and Mathematical Statistics, University of Cambridge, Wilberforce Road, Cambridge, CB3 0WA, UK}
\email{mc2482@cam.ac.uk}

\subjclass{11P70, 60C05, 05C80, 52A20 (primary)}

\usepackage[numbers,longnamesfirst]{natbib}
\usepackage{graphicx}

\addtolength{\footskip}{\baselineskip/2}

\newtheorem{thm}{Theorem}[section]
\newtheorem{lem}[thm]{Lemma}
\newtheorem{defi}[thm]{Definition}

\newtheorem{prop}[thm]{Proposition}
\newtheorem{conjecture}[thm]{Conjecture}
\newtheorem{obs}[thm]{Observation}

\newtheoremstyle{named}{}{}{\itshape}{}{\bfseries}{.}{.5em}{\thmnote{#3}}
\theoremstyle{named}

\begin{document}

\begin{abstract}
  The Cayley sum graph $\Gamma_A$ of a set $A \subseteq \Zmodn$ is defined to have vertex set $\Zmodn$ and an edge between two distinct vertices $x, y \in \Zmodn$ if $x + y \in A$.
  \citeauthor{GM16} proved that if the set $A$ is a $p$-random subset of $\Zmodn$ with $p = 1/2$, then the independence number of $\Gamma_A$ is asymptotically equal to $\alpha(G(n, 1/2))$ with high probability.
  Our main theorem is the first extension of their result to $p = o(1)$: we show that, with high probability, $$\alpha(\Gamma_A) = (1 + o(1)) \alpha(G(n, p))$$ as long as $p \ge (\log n)^{-1/80}$.

  One of the tools in our proof is a geometric-flavoured theorem that generalises \Freiman's lemma, the classical lower bound on the size of high dimensional sumsets.
  We also give a short proof of this result up to a constant factor; this version yields a much simpler proof of our main theorem at the expense of a worse constant.
\end{abstract}

\maketitle

\section{Introduction}

Let $n \in \mathbb{N}$ be a large prime. The Cayley sum graph $\Gamma_A$ of a set $A \subseteq \Zmodn$ is defined to have $\Zmodn$ as its vertices and an edge between two distinct $x, y \in \Zmodn$ if $x + y \in A$.
\citet[Conjecture 4.4]{Alo07} conjectured that for each $t \in \{1, \dots, n\}$, there exists a set $A \subseteq \Zmodn$ such that $|A| = t$ and $\Gamma_A$ has independence number at most $\frac{n}{t} (\log n)^{O(1)}$, and moreover remarked that ``the conjecture may well hold for a random choice of $A$.''

\citeauthor{Gre05} more generally asked (see \cite[Question 2]{Gre05}) which parameters of $\Gamma_A$ match those of the random graph $G(n, p)$ when $A$ is a random subset of $\Zmodn$.
In that same work, he showed that if $A$ is chosen uniformly at random over all subsets of $\Zmodn$, then with high probability the size of the largest independent set\footnote{\citeauthor{Gre05} actually states this for the clique number, but this is equivalent to what we state since a ${1/2}$-random subset of $\Zmodn$ has the same distribution of its complement.} of $\Gamma_A$ is at most $\alpha(G(n, 1/2))$, up to a constant factor.

This result is in fact a consequence of a theorem in additive combinatorics.
Recall that the sumset $X + Y$ and the restricted sumset $X \rsum X$ are defined as
\begin{equation*}
  X + Y = \{x + y : x \in X, y \in Y\}, \qquad X \rsum X = \{x_1 + x_2 : x_1, x_2 \in X, x_1 \neq x_2\},
\end{equation*}
and observe that $X \subseteq \Zmodn$ is independent in $\Gamma_A$ exactly when $X \rsum X \subseteq \complement{A}$.
\citeauthor{Gre05} deduced his bound on the typical size of $\alpha(\Gamma_A)$ from estimates on the number of sets $X \subseteq \Zmodn$ such that $|X| = k$ and $|X + X| \le \sigma k$, where $\sigma$ is some upper bound on the doubling of $X$.
When referring to the doubling of the set itself, we will use $\sigma[X]$, \ie $\sigma[X] = |X + X| / |X|$.

Later, \citet*{GM16} showed that better estimates on the number of such sets are possible if one handles ranges of $\sigma[X]$ differently.
When $\sigma[X]$ is constant, they used an arithmetic regularity lemma to obtain a tighter bound.
Leveraging the isoperimetric inequality on $\Z^d$, they obtain a bound on the number of ``quasi-random'' elements in sets with doubling close to maximum, thus refining \citeauthor{Gre05}'s estimate for the number of such sets.
As a consequence, they determined the correct constant for the independence number of $\Gamma_A$ when $A \subseteq \Zmodn$ is uniform random, showing that $\alpha(\Gamma_A)$ asymptotically matches $\alpha(G(n, 1/2))$ with high probability~\cite{GM16}.

Our main \namecref{stmt:main-result} extends the above result of \citeauthor*{GM16} to the more challenging setting when $A$ is a sparse random subset of $\Zmodn$, and can be seen as progress towards \citeauthor{Alo07}'s aforementioned conjecture.
To state our result, it is useful to denote by $\Gamma_p$ the (random) Cayley sum graph $\Gamma_A$ when $A$ is a $p$-random subset of $\Zmodn$ for $p = p(n)$.

\begin{thm}\label{stmt:main-result}
  Let $n$ be a prime number and let $p = p(n)$ satisfy $p \ge (\log n)^{-1/80}$.
  The random Cayley sum graph $\Gamma_p$ of $\Zmodn$ satisfies
  \begin{equation}\label{eq:main-result}
    \alpha(\Gamma_p) = \big(2 + o(1)\big) \log_{\frac{1}{1 - p}} n
  \end{equation}
  with high probability as $n \to \infty$.
\end{thm}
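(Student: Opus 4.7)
My plan is to prove matching upper and lower bounds on $\alpha(\Gamma_p)$, both equal to $(2+o(1))\log_{1/(1-p)}n$ with high probability. Writing $k^\ast = 2\log_{1/(1-p)}n$, I will separately show that $\alpha(\Gamma_p)\ge(1-o(1))k^\ast$ and $\alpha(\Gamma_p)\le(1+o(1))k^\ast$.

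For the lower bound I would follow a standard second-moment / Janson-type argument. A set $X\subseteq\Zmodn$ of size $k$ is independent in $\Gamma_A$ exactly when $X\rsum X\subseteq\complement{A}$, an event of probability $(1-p)^{|X\rsum X|}$. For all but a negligible fraction of $k$-subsets the $\binom{k}{2}$ pairwise sums $x_i+x_j$ are pairwise distinct, since $n$ is prime and $k=k^\ast$ is much smaller than $\sqrt{n}$; hence for such $X$ the independence probability equals $(1-p)^{\binom{k}{2}}$, exactly as in $G(n,p)$. Taking $k=(1-\eps)k^\ast$, a computation of the variance (grouping pairs $X,Y$ of $k$-subsets by the size of $(X\rsum X)\cap(Y\rsum Y)$, and controlling how many additive coincidences can occur between the two sets) yields concentration around the mean and gives an independent set of size $(1-o(1))k^\ast$ with high probability.

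For the upper bound, with $k_+=(1+\eps)k^\ast$, the key quantity is the first moment
\[
  \E\bigl[\#\{X\subseteq\Zmodn : |X|=k_+,\ X\rsum X\subseteq\complement{A}\}\bigr]=\sum_{X:|X|=k_+}(1-p)^{|X\rsum X|}.
\]
If we could replace $|X\rsum X|$ uniformly by $\binom{k_+}{2}$ the calculation would match $G(n,p)$ and Markov would finish. The obstruction is precisely sets of small doubling $\sigma[X]=|X+X|/|X|$, for which $|X\rsum X|$ is much smaller and the weight $(1-p)^{|X\rsum X|}$ is much larger. To handle these, I would stratify the sum over $X$ by a structural parameter---the Freiman dimension $\dimF(X)$ or the doubling $\sigma[X]$---and for each stratum trade off (i) an upper bound on the number of $k_+$-subsets of $\Zmodn$ in that stratum against (ii) a lower bound on $|X\rsum X|$. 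The promised generalisation of \Freiman's lemma provides exactly this kind of dimension-sensitive lower bound, essentially $|X\rsum X|\ge(d+1)|X|-O_d(1)$ or better, for $X$ of Freiman dimension $d$.

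The main obstacle is the intermediate regime of doubling, roughly $\sigma[X]\in[C,k_+/C]$, where neither extreme estimate suffices. At one end ($\sigma[X]$ close to maximal) the set is generic and behaves like a random $k_+$-subset of $\Zmodn$, so the sum above mimics $G(n,p)$ and Markov closes the bound; at the other end ($\sigma[X]=O(1)$) Freiman-type structure forces $X$ to lie in a short GAP, which drastically cuts the combinatorial count and absorbs the loss in $|X\rsum X|$. In the middle, however, one must simultaneously count sets of given Freiman dimension and apply the generalised \Freiman lemma to the restricted sumset (which differs from $X+X$ only by the diagonal terms $2x$ and is therefore comparable). I expect the threshold $p\ge(\log n)^{-1/80}$ to emerge from the bookkeeping in this intermediate regime: since $k_+$ grows like $(\log n)/p$, making $p$ smaller widens both the range of dimensions and the gap between the counting and sumset estimates, which is what ultimately limits how sparse $A$ can be while still forcing $\alpha(\Gamma_p)$ to match $\alpha(G(n,p))$.
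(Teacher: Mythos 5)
Your lower-bound plan matches the paper: restrict to $k$-sets with all pairwise sums distinct (the paper's $\cZ_k$), compute the second moment by grouping pairs by the overlap $(X\rsum X)\cap(Y\rsum Y)$, and conclude with Chebyshev. That part is fine.

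Your upper-bound plan, however, has a genuine gap, and you have also misidentified where the difficulty lies. You write that the small-doubling end ($\sigma[X]=O(1)$) is handled by \Freiman structure ``which drastically cuts the combinatorial count and absorbs the loss,'' and that the \emph{intermediate} range is the real obstacle. In fact the opposite is true: the intermediate range $k^\beta<\sigma[X]=o(k)$ and the near-maximal range are handled directly by \citeauthor{Gre05}'s and \citeauthor{GM16}'s existing counts (\Cref{stmt:greens-estimates-on-sets-with-bounded-doubling} and \Cref{stmt:appendix}), and it is precisely the \emph{small}-doubling range that breaks every union-bound approach once $p=o(1)$. To see why your plan fails there, suppose $\sigma[X]=O(1)$ and \Freiman's theorem places $X$ inside a GAP $P$ with $|P|\le C|X|$ and $\dim(P)=d$. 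Choosing $P$ costs about $n^{d+1}$, and \Freiman's lemma gives $|X\rsum X|\gtrsim (d+1)k/2$, so $(1-p)^{|X\rsum X|}\approx n^{-(1+o(1))(d+1)}$ and these two roughly cancel. But you must still choose $X$ inside $P$, and that costs $\binom{|P|}{k}\ge\binom{Ck}{k}\approx C^k$; to beat this you would need extra slack in $(1-p)^{|X\rsum X|}$, and since $|X\rsum X|\le O(k)$ for bounded doubling, the total slack available is $\exp(-O(kp))$, which is only $\exp(-O(\log n))$ and cannot dominate $C^k\approx C^{\log n/p}$ when $p=o(1)$. This is the explicit computation in~\eqref{eq:union-bound-fails}, which shows the straightforward first moment diverges. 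The missing idea, and the paper's central contribution, is the fingerprint mechanism: for each $X$ with small doubling, one does not take a union bound over all of $X\subseteq P$ but over a much smaller $F\subseteq X$ (size $\approx\sqrt{|X\rsum X|}\log|X|$) with the property that $|F\rsum F|$ is still $\ge(1-\gamma)(d+1)k/2$, so $\binom{|P|}{|F|}(1-p)^{|F\rsum F|}\to 0$. Proving such fingerprints exist requires the supersaturation result \Cref{stmt:supsat}, which in turn rests on the few-translates version of \Freiman's lemma, \Cref{stmt:freiman-lemma-via-few-translates}; the quantitative dependence of the number of translates on the rank (linear in $r$, not tower-type) is exactly what keeps the fingerprint sizes small enough. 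Your proposal mentions the few-translates \Freiman lemma only as a way to lower-bound $|X\rsum X|$, not as the engine behind a fingerprint/container-style reduction, so it would not recover the sparse regime $p=o(1)$.
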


If $p = o(1)$, then this bound is asymptotically equal to $(2 + o(1)) (\log n) / p$; it also matches $\alpha(G(n, p))$ in the corresponding range of $p$~\cite{Bol85, JLR00}.
We remark that we do not believe the lower bound on $p$ in \Cref{stmt:main-result} to be close to optimal, so we do not optimize the constant in the exponent of the $\log$.
Nonetheless, there is a natural barrier for our methods when proving upper bounds at around $p \approx (\log n)^{-1}$ (see \Cref{sec:concluding-remarks}).

The lower bound in \eqref{eq:main-result} follows from a second moment computation together with a simple combinatorial argument (see \Cref{sec:lower-bound}).
The proof of the upper bound, on the other hand, is much more challenging, even though it has the same broad outline pioneered by \citet*{Gre05}.
In fact, we use his bounds on the number of sets with given doubling for the ``middle'' range $k^\beta < \sigma[X] = o(k)$, and a minor extension (that follows from the same proof) of the theorem of \citet{GM16} for the ``upper'' range $\sigma[X] = \Theta(k)$.
Our contribution is therefore in the ``lower'' doubling range, $\sigma[X] \le k^\beta$ for a small constant $\beta > 0$, where even the correct count of such sets would not be enough to prove our main \namecref{stmt:main-result}.

To overcome this obstacle, we show that each of those sets contains a much smaller subset $F$ -- we call it a ``fingerprint'' of $X$ -- so that, after determining that $F$ possesses some special properties, it suffices to count the fingerprints to deduce \Cref{stmt:main-result}.
This idea is inspired by the application of the asymmetric container method of \citet*{MSS24} to the problem of counting sets with small sumset by the first author~\cite{Cam20}.
Indeed, we could also use asymmetric containers as a tool to prove our main \namecref{stmt:main-result}, but we prefer this simpler approach as it results in better bounds and a more self-contained proof.

The special property we require of each fingerprint $F \subseteq X$ is having a sufficiently large sumset.
Ideally, we would like $F + F$ to be as large as $X + X$; this obviously imposes a lower bound of $|F| \ge |X + X|^{1/2}$.
\citeauthor{BLT22+b} recently studied the question of finding such sets for general Abelian groups: they obtain $F \subseteq X$ satisfying $|F + F| \approx |X + X|$ when $X$ has bounded doubling~\cite[Theorem 2]{BLT22+b}.
However, their result does not serve our needs because it does not handle the case of $\sigma[X]$ being $k^\beta$ for fixed, small $\beta > 0$.

We pursue a different approach to obtain our collection of fingerprints, one that is able to handle sets with doubling that is polynomial in their size.
This approach yields fingerprints such that $|F| \approx |X + X|^{1/2} \log |X|$, which is only a logarithmic factor away from the ideal bound, but $|F + F|$ is in general far from $|X + X|$.
What we can show is that the size of the sumset is as large as in \Freiman's lemma, which is just enough for our application.

It is therefore useful to recall \Freiman's lemma, and to do so we need a definition.
We will write $\rank(X)$ for the minimum dimension of a hyperplane that contains a set $X \subseteq \R^d$; if $\rank(X) = d$, then we say $X$ has full rank.
The statement of the lemma is then:
\begin{lem}[\citet{Fre99}]\label{stmt:freimans-lemma}
  Let $X \subseteq \R^d$ be a finite set of full rank.
  Then,
  \begin{equation*}
    |X + X| \ge (d + 1)|X| - \binom{d + 1}{2}.
  \end{equation*}
\end{lem}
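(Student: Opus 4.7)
The plan is to prove the lemma by induction on the dimension $d$. For the base case $d = 1$, writing $X = \{x_1 < x_2 < \cdots < x_k\} \subseteq \R$, the chain $x_1 + x_1 < x_1 + x_2 < \cdots < x_1 + x_k < x_2 + x_k < \cdots < x_k + x_k$ exhibits $2k - 1$ distinct elements of $X + X$, which matches the bound.

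For the inductive step, suppose $d \ge 2$ and the statement holds in dimension $d - 1$, and let $X \subseteq \R^d$ have full rank with $|X| = k$. Since $\conv(X)$ is a $d$-dimensional polytope, it has at least one facet, and the affine hull of such a facet is a supporting hyperplane $H$ for which $Y := X \cap H$ has rank exactly $d - 1$ inside $H \cong \R^{d-1}$; set $\ell := |Y| \ge d$ and $X' := X \setminus Y$, so $|X'| = k - \ell \ge 1$. Applying the inductive hypothesis to $Y$ yields $|Y + Y| \ge d\ell - \binom{d}{2}$, so the remaining task is to exhibit at least $(d+1)(k - \ell) + (\ell - d)$ additional sums in $X + X$ that lie outside the slice containing $Y + Y$.

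I would obtain those extra sums by stratifying $X + X$ according to the linear functional $\inner{v, \cdot}$, where $v$ is a unit normal to $H$ pointing into $X'$; after translating, we may assume $\inner{v, y} = 0$ for all $y \in Y$ and $\inner{v, x'} > 0$ for all $x' \in X'$. Then $Y + Y$ lives entirely in the slice $\{\inner{v, \cdot} = 0\}$, while every sum involving an element of $X'$ lies in a strictly higher slice. Let $x^* \in X'$ maximise $\inner{v, \cdot}$. The two families $Y + X'$ and $X' + x^*$ occupy disjoint collections of slices: every slice hit by $X' + x^*$ sits strictly above every slice hit by $Y + X'$, because $\inner{v, x' + x^*} > \inner{v, x^*} \ge \inner{v, x''}$ for all $x', x'' \in X'$. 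Applying the inequality $|A + B| \ge |A| + |B| - 1$ (valid in every torsion-free abelian group) slice by slice within each family then extracts enough elements to supply the required count.

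The most delicate point will be the bookkeeping when several elements of $X'$ share the same $v$-coordinate: such collinear elements must be grouped into a common layer and handled jointly via the torsion-free inequality, and one must verify that the lower bounds accumulated from $Y + Y$, from the layers of $Y + X'$, and from the layers of $X' + x^*$ indeed add up to $(d+1)k - \binom{d+1}{2}$. Once the correct disjoint contributions have been identified, this reduces to a short algebraic verification.
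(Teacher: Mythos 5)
The paper does not prove \Cref{stmt:freimans-lemma} directly, but the proof of its weighted variant, \Cref{stmt:weighted-freiman}, follows what it calls ``the standard proof of \Freiman's lemma'': induction on $|X|$, removing a single vertex $v$ of $\conv(X)$ at each step and splitting into cases according to whether $\rank(X \setminus \{v\}) = \rank(X)$. Your proposal instead does induction on the dimension $d$, removing the entire intersection $Y = X \cap H$ with a facet hyperplane $H$ at once. This is a genuinely different decomposition, and it has a gap.

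The gap is in the final ``short algebraic verification'', which does not close when $X' = X \setminus Y$ is concentrated in few layers. Take $d \ge 3$, $Y = \{0, e_1, \ldots, e_{d-1}\}$ (so $\ell = d$ and $\rank(Y) = d-1$), and
\begin{equation*}
  X' = \{\, e_d + j e_1 : 0 \le j \le t - 1 \,\}, \qquad t \ge 2,
\end{equation*}
so that every element of $X'$ has $v$-coordinate $1$ and $m = 1$. Your accounting gives the three contributions
\begin{equation*}
  |Y + Y| \ge \binom{d+1}{2}, \qquad |Y + X'| \ge |Y| + |X'| - 1 = d + t - 1, \qquad |X' + x^*| = t,
\end{equation*}
for a total of $\binom{d+1}{2} + d + 2t - 1$, whereas the target $(d+1)|X| - \binom{d+1}{2}$ equals $\binom{d+1}{2} + (d+1)t$. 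The shortfall is $(d-1)(t-1)$, which is positive whenever $t \ge 2$ (and grows in both $d$ and $t$), and replacing $X' + x^*$ by $X' + X'$ only improves the total by $t - 1$, which is still not enough. The loss is in the single middle slice: there $Y + X'$ actually has size $(d-1)t + 1$, but the torsion-free inequality $|A + B| \ge |A| + |B| - 1$ only certifies $d + t - 1$ of these elements. When $m$ is small relative to $k - \ell$ you cannot afford to use this one-dimensional bound within a slice; you would need a \Freiman-type bound for $|Y + X_i'|$ exploiting that $\rank(Y) = d - 1$, which your induction hypothesis (stated only for sets of the form $A + A$ of full rank) does not supply. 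Patching this would require either a stronger inductive statement for asymmetric sumsets $A + B$, or abandoning the facet decomposition in favour of the standard one-vertex-at-a-time induction, where the removed vertex contributes either $d + 1$ new sums (rank preserved) or $|X|$ new sums (rank drops), and the two cases exactly close.
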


Observe that \Cref{stmt:freimans-lemma} is a statement about subsets of $\R^d$ rather than $\Zmodn$.
To amend this, we can use the (by now) standard machinery of \Freiman isomorphisms (see, for example, \cite[Chapter 5.3]{TV06}) to reach similar conclusions for subsets of the integers modulo $n$, changing rank for \Freiman dimension when appropriate.
We therefore change the setting to $\R^d$ for the rest of this informal discussion.

Like applications of the hypergraph container method, the way to show that $F + F$ is almost as large as $(d + 1)|X|$ is by proving a suitable supersaturation result (see \Cref{stmt:supsat}, below).
In fact, from this result we will be able to show that picking a fingerprint randomly inside $X$ works.
\Cref{stmt:supsat} informally says the following: if a set $Y$ approximates $X + X$ in terms of its popular sums, then it is (almost) as large as the bound in \Freiman's lemma.

This key step in the proof of \Cref{stmt:supsat} is a result stating that we can find some small $T \subseteq X$ such that $X + T$ almost attains the bound in \Freiman's lemma.
Proving this \namecref{stmt:freiman-lemma-via-few-translates} is the most difficult part of our proof, and we consider it to be of independent interest.
\begin{restatable}{thm}{freimanlemmafewtrans}
  \label{stmt:freiman-lemma-via-few-translates}
  Let $d, r \in \N$, let $\gamma > r^{-1/3}$ and let $X \subseteq \R^d$ be a finite set with $\rank(X) \ge r$.
  There exists $T \subseteq X$ such that $|T| \le 4(r + 1)/\gamma$ and
  \begin{equation*}
    |X + T| \ge (1 - 5\gamma) \frac{(r + 1)|X|}{2}.
  \end{equation*}
\end{restatable}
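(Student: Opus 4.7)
The plan is to proceed by induction on the rank $r$, in parallel with the classical inductive proof of \Freiman's lemma (\Cref{stmt:freimans-lemma}). After restricting to the affine hull one may assume $\rank X = r$ exactly; the small-$|X|$ case $|X| \le 4(r+1)/\gamma$ is settled by taking $T = X$ and invoking \Freiman's lemma directly, which gives $|X + X| \ge (r+1)|X| - \binom{r+1}{2} \ge (1-5\gamma)(r+1)|X|/2$ for the admissible range of parameters.

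For the inductive step the plan is to peel a supporting hyperplane from $\conv(X)$. Choose an outward-pointing normal $v$ to a facet of $\conv(X)$ and let $H = \{z : \inner{v, z} = c\}$ with $c = \max_{x \in X}\inner{v, x}$; then $X_0 := X \cap H$ has rank exactly $r - 1$, since a facet is an $(r-1)$-face of $\conv(X)$ whose vertices lie in $X$, and $X_1 := X \setminus H$ sits strictly in the open half-space $\{z : \inner{v, z} < c\}$. Apply induction to $X_0$ to obtain $T_0 \subseteq X_0$ with $|T_0| \le 4r/\gamma$ and $|X_0 + T_0| \ge (1 - 5\gamma)\, r |X_0|/2$, and set $T = T_0 \cup T_1$ for some $T_1 \subseteq X_1$ with $|T_1| \le 4/\gamma$. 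The crucial geometric observation is that $X_0 + T_0$ sits entirely in the affine hyperplane $\{z : \inner{v, z} = 2c\}$, while every other sum of the form $X_0 + T_1$, $X_1 + T_0$, or $X_1 + T_1$ has $\inner{v, z} < 2c$ strictly, so the two layers contribute disjointly:
\begin{equation*}
  |X + T| \;\ge\; |X_0 + T_0| + |X + T_1|.
\end{equation*}

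To close the induction the plan is to show $|X + T_1| \ge (1 - O(\gamma))(r|X_1| + |X|)/2$; combined with the inductive estimate for $|X_0 + T_0|$ and the identity $|X_0| + |X_1| = |X|$, this delivers the target $(1 - 5\gamma)(r+1)|X|/2$. When $|X_1| \le |X|/r$ a single element of $T_1$ already suffices, because $|X + T_1| \ge |X|$ is automatic; the serious difficulty, and the reason $|T_1|$ is allowed to be as large as $4/\gamma$ rather than $1$, is the regime in which $X_1$ carries a constant fraction of $|X|$. In that regime the plan is to pick $T_1$ greedily, one element at a time, adding the $x \in X_1$ that maximizes the marginal gain $|X + (T_1 \cup \{x\})| - |X + T_1|$, and to lower bound this gain by an averaging identity: the average of the gain over $x \in X_1$ equals $|X| - \frac{1}{|X_1|}\sum_{y \in X + T_1} a_{X_1}(y)$, where $a_{X_1}(y) = |\{(x_1, x) \in X_1 \times X : x_1 + x = y\}|$. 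The main obstacle is thus a supersaturation-type statement showing that unless $|X + T_1|$ has already reached the target, $a_{X_1}(\cdot)$ cannot be too concentrated on $X + T_1$; I expect this to follow from \Freiman's lemma applied to $X$ itself, coupled with a Cauchy--Schwarz inequality, with the hypothesis $\gamma > r^{-1/3}$ entering precisely in order to absorb both the $O(r^2)$ error in \Freiman's bound and the cumulative $O(\gamma)$ losses incurred across the $r$ levels of the induction inside the final factor $1 - 5\gamma$.
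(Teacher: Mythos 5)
Your facet-peeling plan diverges from the paper's (which runs a greedy translate-adding phase and then, upon stalling, passes to a fibre decomposition with a weighted variant of \Freiman's lemma), so this is a genuinely different route --- but it has a quantitative gap that cannot be repaired as stated.

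The problem is the core claim $|X + T_1| \ge (1 - O(\gamma))(r|X_1| + |X|)/2$ for $T_1 \subseteq X_1$ with $|T_1| \le 4/\gamma$. Each translate contributes at most $|X|$ elements, so
\begin{equation*}
  |X + T_1| \;\le\; |T_1|\,|X| \;\le\; \frac{4}{\gamma}\,|X| \;<\; 4 r^{1/3}\,|X|,
\end{equation*}
using $\gamma > r^{-1/3}$. If $|X_1| = \Theta(|X|)$ --- the regime you yourself flag as the serious one --- the right-hand side of your target is $\Theta(r|X|)$, which exceeds $4 r^{1/3}|X|$ as soon as $r$ is at all large. No averaging, supersaturation, or Cauchy--Schwarz argument can help: the inequality is impossible on cardinality grounds alone. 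Worse, $X_0$ can legitimately carry only $O(r)$ points (a facet of a rank-$r$ polytope can have as few as $r$ vertices of $X$), in which case the inductive term $|X_0 + T_0|$ is $O(r^2)$ and essentially all of the target $(r+1)|X|/2$ must come from $X + T_1$, which again requires $\Omega(r)$ translates rather than $O(r^{1/3})$. The fix would require allocating to $T_1$ a number of translates growing linearly in $r$, but then the inductive bookkeeping breaks: $X_0$ would need fewer than $4r/\gamma$ translates, and the recursion doesn't close with the stated constants. There is also a secondary issue you don't address: the inductive call to $X_0$ (rank $r - 1$) requires $\gamma > (r-1)^{-1/3}$, which is strictly stronger than your hypothesis $\gamma > r^{-1/3}$; even granting the main estimate, the induction would need a more careful parametrisation to go through.

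The paper avoids this trap precisely because its greedy phase \emph{does} use $\Theta(r)$ translates (each contributing roughly $(1-\gamma)|X|/2$ new elements), and when the greedy argument stalls it is because a $\gamma$-fraction of $X$ has accumulated in a low-dimensional subspace $W$ --- at which point the decomposition into fibres of the projection $X \to W^\perp$, rather than into a facet and its complement, gives the right handle on the geometry. You may want to compare the roles of your $X_0$ and the paper's zero fibre $\equivc{0} = X \cap W$: both are ``the heavy layer,'' but the paper's $W$ is chosen adaptively so that $\equivc{0}$ is guaranteed a $\gamma$-fraction of the mass, which is exactly the control your facet-based split lacks.
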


The proof of this theorem is technical; however, we will also show that if we were satisfied with $1/6$ being the leading constant instead of $(1 - 5\gamma)/2$, then a much simpler approach would suffice.
This weaker version would also be enough to prove the upper bound in \Cref{stmt:main-result} with 6 being the leading constant instead of 2.

Recently, \citet{JM23+} proved the following \namecref{stmt:jing-mudgal-freiman} that is closely related to \Cref{stmt:freiman-lemma-via-few-translates}.
It obtains the correct leading constant, \ie without the $(1 - 5\gamma)/2$ or $1/6$ loss in our bound, at the expense of a worse relationship between the number of translates and the dimension of the set:
\begin{thm}[{\cite[Theorem 1.2]{JM23+}}]\label{stmt:jing-mudgal-freiman}
  Given $d \in \N$, there exists a constant $C = C(d) > 0$ such that, for every finite set of full rank $X \subseteq \R^d$, there exist $x_1, \ldots, x_C \in X$ satisfying
  \begin{equation*}
    |X + \{x_1, . . . , x_C\}| \ge (d + 1)|X| - 5(d + 1)^3.
  \end{equation*}
\end{thm}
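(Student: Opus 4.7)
I propose induction on the dimension $d$, defining $C(d)$ recursively. For the base case $d = 1$, take $T = \{\min X, \max X\}$: the two shifts $X + \min X$ and $X + \max X$ meet only at the single point $\min X + \max X$, so $|X + T| = 2|X| - 1 \ge 2|X| - 40$, giving $C(1) = 2$.

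For the inductive step with $d \ge 2$, let $X \subseteq \R^d$ be of full rank and pick a facet $F$ of $\conv(X)$ with affine hull $H$. Set $X_0 := X \cap H$ (which has rank $d-1$ within $H$) and $X^+ := X \setminus X_0$ (which lies strictly on one side of $H$). The inductive hypothesis applied to $X_0$ inside $H \cong \R^{d-1}$ yields $T_0 \subseteq X_0$ with $|T_0| \le C(d-1)$ and $|X_0 + T_0| \ge d |X_0| - 5 d^3$ as a sum inside $H$. Stratify $X = \bigsqcup_{i=0}^{m} X_i$ by distance $0 = h_0 < h_1 < \cdots < h_m$ from $H$. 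Since $T_0 \subseteq H$, translation by $T_0$ preserves each level, so the slab at height $h_i$ of $X + T_0$ contains $X_i + T_0$, and summing gives $|X + T_0| \ge d|X_0| - 5 d^3 + |X^+|$, short of the target $(d+1)|X| - 5(d+1)^3$ by about $|X_0|$.

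To close this gap, augment $T_0$ with $O(d)$ extreme vertices of $\conv(X)$ strictly above $H$. A vertex $v$ of $\conv(X)$ on the topmost hyperplane $H_m$ shifts $X$ into heights $\{h_m, h_m + h_1, \dots, 2h_m\}$, all strictly above the heights $\{0, h_1, \dots, h_m\}$ of $X + T_0$ except at $h_m$, where the overlap is bounded by $|X_m|\cdot|T_0| \le |X_m|\, C(d-1)$; iterating this ``peel-off-the-cap'' procedure on the residual polytope recovers the missing $|X_0|$ sums. The main obstacle I anticipate is bounding the number of peeling iterations by a function of $d$ alone, since a priori this could scale with $m$ or even with $|X^+|$; resolving this will require a more refined selection than ``topmost vertex,'' probably an application of \Cref{stmt:freiman-lemma-via-few-translates} to the vertex set of $\conv(X)$ to find $O(d)$ vertices whose heights collectively saturate the contributions from all $m$ slabs at once. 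The constant $5$ in the error term $5(d+1)^3$ leaves $15 d^2 + 15 d + 5$ slack beyond the inductive $5 d^3$, which is just enough for the $O(d)$ augmentation losses provided each new translator introduces only $O(d)$ duplicated sums.
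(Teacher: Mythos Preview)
This theorem is not proved in the present paper: it is quoted from \cite{JM23+} and used as a black box (only in the bounded-dimension regime of the supersaturation argument). The paper does, however, describe the actual proof route: \citeauthor{JM23+} deduce it from a theorem of \citeauthor*{BLT22+a}~\cite[Theorem~8]{BLT22+a}, whose proof in turn rests on Shao's almost-all Balog--Szemer\'edi--Gowers theorem; this is why $C(d)$ comes out tower-type in $d$. There is no elementary geometric induction in the literature that yields the sharp leading constant $d+1$ with \emph{any} explicit $C(d)$.

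Your proposal has a concrete arithmetic error that makes the rest of the plan unworkable. After applying the inductive hypothesis on the facet and stratifying by height, you obtain
\[
  |X + T_0| \;\ge\; d|X_0| - 5d^3 + |X^+|,
\]
whereas the target is $(d+1)|X| - 5(d+1)^3 = (d+1)|X_0| + (d+1)|X^+| - 5(d+1)^3$. The deficit is therefore
\[
  |X_0| + d\,|X^+| - 5\big((d+1)^3 - d^3\big),
\]
not ``about $|X_0|$'' as you wrote. You are missing a term of order $d|X^+|$, which in general is of order $d|X|$. Adding $O(d)$ extreme vertices above $H$ and peeling caps cannot recover $d|X^+|$ new sums in general: each new translator contributes at most $|X|$ new sums, so you would need at least $d$ of them to do \emph{all} the work on $X^+$, and there is no mechanism in your sketch guaranteeing that these contributions are anywhere close to $|X|$ each or that they are disjoint from one another and from $X+T_0$ up to $O(d^2)$ overlaps. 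Invoking \Cref{stmt:freiman-lemma-via-few-translates} does not help either, since that result only delivers leading constant $(1-5\gamma)/2$, not $1$. The sharp constant $d+1$ with boundedly many translates is genuinely hard; this is exactly why the known proof passes through the almost-all BSG machinery.
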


\Cref{stmt:jing-mudgal-freiman} is part of a recent line of work~\cite{BLT22+a, FLPZ24} that relies on a beautiful theorem of \citet*[Theorem 8]{BLT22+a} to obtain sumset lower bounds via few-translates.
Unfortunately, the proof of this theorem uses \citeauthor{Sha19}'s~\cite{Sha19} almost-all Balog--Szemer\'edi--Gowers theorem, and, as a consequence, the dependency of the number of translates $C$ on the dimension $d$ is of tower-type~\cite{BLT22+a, Sha19}.

A related result, which avoids super-polynomial dependencies between its parameters, is the following elegant \namecref{stmt:flpz} proved by \citet*{FLPZ24}.
Its proof relies on a clever path counting argument akin to Gowers' proof of the Balog--Szemer\'edi theorem.
Note that we state a specialized version of their much more general result.
\begin{thm}[{\cite[Theorem 1.1]{FLPZ24}}]\label{stmt:flpz}
  There exists $c > 0$ such that the following holds.
  Let $X \subseteq \R^d$ with $|X| = k$.
  For every $s \in \{1, \ldots, k\}$, there exists $T \subseteq X$ such that $|T| \le s$ and
  \begin{equation}\label{eq:flpz}
    |X + T| \ge c \min\big\{\sigma[X]^{1/3}, s\big\}|X|.
  \end{equation}
\end{thm}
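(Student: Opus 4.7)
The plan is a probabilistic construction of $T$ combined with a two-regime analysis: a "quasirandom" regime handled by a moment estimate, and a "structured" regime handled by a Gowers-style path-counting argument. Set $K = \sigma[X]$, and let $s' := \min(s, \lceil cK^{1/3}\rceil)$ for a small absolute constant $c$; replacing $s$ with $s'$ reduces the problem to showing the conclusion under the assumption $s \le cK^{1/3}$. I would then sample $T$ by including each $x \in X$ independently with probability $p = s/|X|$; standard concentration gives $|T| \le s$ with positive probability, so the task becomes lower-bounding $\E|X+T|$.

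Writing $r(z) = |\{(x_1,x_2) \in X^2 : x_1 + x_2 = z\}|$ for the representation function and $E(X) := \sum_z r(z)^2$ for the additive energy, the identity $\E|X+T| = \sum_z (1 - (1-p)^{r(z)})$ combined with the elementary bound $1 - (1-p)^r \ge \tfrac{1}{2}\min(pr, 1)$ and the Markov-type estimate $\sum_{r(z) > 1/p} r(z) \le pE(X)$ (which follows from $r \le pr^2$ when $r > 1/p$) yields
\[
  \E|X+T| \;\ge\; \tfrac{1}{2}\sum_z \min\bigl(pr(z),\, 1\bigr) \;\ge\; \tfrac{1}{2}\bigl(s|X| - s^{2} E(X)/|X|^2\bigr).
\]
Hence, whenever $E(X) \le |X|^3/(2K^{1/3})$, the right-hand side is at least $s|X|/4$ for every $s \le K^{1/3}$, and the theorem follows in this quasirandom regime. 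The threshold $|X|^3/(2K^{1/3})$, and therefore the exponent $1/3$, is exactly what balances the two terms on the right at $s = K^{1/3}$.

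The remaining case $E(X) > |X|^3/(2K^{1/3})$ is where the path counting enters. The plan here is to view the high additive energy as producing many length-two paths $x \to z \to x'$ in the bipartite graph on $X \cup (X+X)$ with $x \sim z$ iff $z - x \in X$, and to apply Gowers's path-counting / dependent random choice procedure from his proof of Balog--Szemer\'edi--Gowers to extract a subset $X' \subseteq X$ with $|X'| \ge \Omega(|X|)$ and much smaller doubling $\sigma[X'] \ll K$; the moment estimate above is then applied inside $X'$ to produce $T \subseteq X' \subseteq X$ with the required sumset size. The main obstacle will be controlling the loss in $|X'|$ during this refinement carefully enough that the exponent $1/3$ survives — any worse than constant-factor loss in $|X'|$ would weaken the exponent. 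As a back-up plan, should the BSG-style refinement prove too lossy in this regime, I would instead build $T$ greedily, tracking the potential $\Phi(T) = \sum_{z \in X+T} \min(r(z), 1/p)$ and arguing that the high energy lets us append an element of $X$ that raises $\Phi$ by a definite increment at each step — this is the most direct incarnation of the "clever path counting" the paper attributes to Figueira--Lovett--Pham--Zakharov.
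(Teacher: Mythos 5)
This statement is not proved in the paper; it is cited verbatim from \citet{FLPZ24}, so there is no in-paper proof to compare against. Evaluating your attempt on its own merits: the first regime is handled correctly. The identity $\E|X+T| = \sum_z\bigl(1-(1-p)^{r(z)}\bigr)$, the elementary bound $1-(1-p)^r \ge \tfrac12\min(pr,1)$, and the estimate $\sum_{r(z)>1/p} r(z) \le p\,E(X)$ are all sound, and they do yield $\E|X+T| \ge \tfrac12\bigl(s|X|-s^2E(X)/|X|^2\bigr)$ with $p=s/|X|$, which is $\Omega(s|X|)$ whenever $E(X) \lesssim |X|^3/K^{1/3}$ and $s \le K^{1/3}$. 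Your identification of the threshold is also correct.

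The gap is the second regime, and it is not a small one — it is where the entire content of the theorem lives. When $E(X) > |X|^3/(2K^{1/3})$ the Balog--Szemer\'edi--Gowers route fails quantitatively, as you suspect: Schoen-type BSG applied with parameter $L=K^{1/3}$ produces $X'\subseteq X$ with only $|X'|\gtrsim |X|/L = |X|/K^{1/3}$, and this factor of $K^{1/3}$ lost in the size of $X'$ exactly cancels the gain you are trying to prove, so the recursion yields nothing. The back-up greedy argument is stated only as an aspiration: you name the potential $\Phi(T)=\sum_{z\in X+T}\min(r(z),1/p)$ but give no mechanism for why a single well-chosen $x\in X$ must increase $\Phi$ by $\Omega(|X|)$ when the energy is high — and indeed a naive greedy step stalls precisely because, in the high-energy regime, $X+x$ can have almost all of its mass on already-covered, high-$r$ fibres. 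The actual argument of \citeauthor{FLPZ24} is not an energy dichotomy followed by BSG; it is a direct iterative construction of $T$ driven by a path-counting lemma (counting walks in a bipartite graph associated to the addition map) that certifies, without any refinement of $X$, that a good translate exists at every step. That mechanism is absent from your sketch, so the high-energy case remains open in your proposal. (Minor: the authors of the cited work are Fox, Liu, Pham and Zakharov.)
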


This result would work for our needs if we could ensure $\sigma[X] > c^{-1} d^3$, but we cannot.
Alas, \citeauthor*{FLPZ24} also exhibit a construction~\cite[Proposition 2.4]{FLPZ24} which shows that we cannot even replace $\sigma[X]^{1/3}$ in \eqref{eq:flpz} by $\sigma[X]^{1/1.29}$; this means that our requirement for the rank of the set in \Cref{stmt:freiman-lemma-via-few-translates} is essential.

In the next \namecref{sec:overview}, we give a detailed sketch of our proof strategy, and a proof of our fingerprint \namecref{stmt:fingerprints}.
Then, in \Cref{sec:freiman-via-few-translates}, we give a simple proof of a weaker version of our ``\Freiman's lemma via few translates'' theorem that nevertheless contains some of the main ideas required for the full proof of \Cref{stmt:freiman-lemma-via-few-translates}.
\Cref{sec:improved-freiman-via-few-translates} is dedicated to improving the constant to $(1 - 5\gamma)/2$, assuming two additional technical lemmas, which we prove in \Cref{sec:offsetting-the-loss-of-the-zero-fibre,sec:weighted-freiman}.
In \Cref{sec:supsat}, we derive our supersaturation result from \Cref{stmt:freiman-lemma-via-few-translates}, and in \Cref{sec:main-theorem,sec:lower-bound} we complete the proof of our main \namecref{stmt:main-result}.
Finally, in \Cref{sec:concluding-remarks}, we discuss future work and open problems.

\section{Overview of the proof}\label{sec:overview}

Throughout, $n \in \N$ will always be a sufficiently large prime number; we will also adopt the standard convention of omitting floors and ceilings whenever they are not essential.
Let $k \in \N$ be the bound that we want to show for the independence number and let $A_p$ be a $p$-random subset of $\Zmodn$.
Denoting $\cX := {\Zmodn \choose k}$, we will show that
\begin{equation*}
  \pr\left(\exists \, X \in \cX : X \rsum X \subseteq \complement{A_p}\right) \to 0
\end{equation*}
as $n \to \infty$, which is equivalent to proving that $\alpha(\Gamma_p) < k$ \whp.

We will follow \citeauthor{Gre05}'s general approach of partitioning $\cX = \cX_1 \cup \cX_2 \cup \cX_3$ where each sub-collection is defined based on the doubling $\sigma[X]$,
\begin{align*}
  \cX_1 & = \left \{ X \in \cX : \sigma[X] \le k^\beta \right \}, \\
  \cX_2 & = \left \{ X \in \cX : k^\beta < \sigma[X] \le \delta k/10 \right \}, \text{ and} \\
  \cX_3 & = \left \{ X \in \cX : \delta k/10 < \sigma[X] \right \},
\end{align*}
for $\beta = 1/40$ and some $\delta = o(1)$, and handle each one differently.
Explicitly, we use the union bound to deduce
\begin{equation*}
  \pr\left(\exists \, X \in \cX : X \rsum X \subseteq \complement{A}\right) \le \sum_{i = 1}^3 \pr\left(\exists \, X \in \cX_i : X \rsum X \subseteq \complement{A}\right).
\end{equation*}

It turns out that we can bound the terms related to $\cX_2$ and $\cX_3$ using the same techniques of \citet{Gre05} and \citet*{GM16}; we therefore defer working out the details regarding these terms to \Cref{sec:main-theorem}.
For the remaining term, we could try to follow the methods in \cite{GM16} and take a union bound over the sets in $\cX_1$:
\begin{equation}\label{eq:the-sum}
  \pr(\exists \, X \in \cX_1 : X \rsum X \subseteq \complement{A_p}) \le \sum_{X \in \cX_1} \pr(X \rsum X \subseteq \complement{A_p}).
\end{equation}
Observe that, for each $X \subseteq \Zmodn$, the probability in the summand is
\begin{equation*}
  \pr(X \rsum X \subseteq \complement{A_p}) = (1 - p)^{|X \rsum X|}.
\end{equation*}
Since there are more than ${m / 2 \choose k}$ sets $X$ such that $|X \rsum X| = m$, the right-hand side of \eqref{eq:the-sum} is at least
\begin{equation*}
  \sum_{m = 2k - 1}^{k^{1 + \beta}} {m / 2 \choose k} (1 - p)^m \ge {2k \choose k} \exp(-5 k p) \to \infty,
\end{equation*}
as $n \to \infty$, whenever $p = o(1)$ and $k \to \infty$.
The conclusion is that any approach that uses the union bound over all sets with small doubling cannot give the optimal upper bound on the independence number for $p$ smaller than some explicit constant, like $1/5$.

As we mentioned in the introduction, one of the key new ideas in this paper is to show that there exists a family $\cF$ of fingerprints such that it suffices to consider only the events $\{F \rsum F \subseteq \complement{A_p}\}_{F \in \cF}$ instead of the collection $\{X \rsum X \subseteq \complement{A_p}\}_{X \in \cX_1}$.
Trivially, $\cF = \cX_1$ is such a family, so, to make this strategy work and improve upon taking the union bound over all $X$, we must choose $\cF$ in a more clever way.

The first property of these fingerprints that we require is that $|\cF|$ is sufficiently small, where small vaguely means that a union bound over all $F \in \cF$ works.
One direct way to achieve that is picking each fingerprint $F$ to be small, but there is a subtle trade-off between the size of $F$ and the upper bound on the probability term $(1 - p)^{|F \rsum F|}$.
To circumvent this issue, we can use the fact that we have a bound (even if a polynomially large one) on the doubling of each $X \in \cX_1$.
In this setting, \Freiman's theorem~\cite{Fre59} says that $X$ is contained in a generalised arithmetic progression $P$ such that both the size $s$ and dimension $d$ of $P$ are small.
Recall that a $d$-dimensional generalised arithmetic progression is a set of the form
\begin{equation*}
  \bigg\{ a_0 + \sum_{i = 1}^d w_i a_i : w_i \in \Z, \, 0 \le w_i < \ell_i \bigg\}
\end{equation*}
for some differences $a_0, \ldots, a_d \in \Zmodn$ and side-lengths $\ell_1, \ldots, \ell_d \in \N$, and $P$ is proper when every possible choice of $\{w_1, \ldots, w_d\}$ corresponds to a distinct element of $P$.
Therefore, instead of choosing $F$ directly from $\Zmodn$, we first choose $P$ and then select $F$ inside $P$.

Now we can state the first requirement for $F \in \cF$ in more detail.
Let $P$ be the generalised arithmetic progression given by \Freiman's theorem for $X$.
Then, $F$ should be small enough compared to $|F \rsum F|$ for a union bound over choices of $F \subseteq P$ to work:
\begin{equation}\label{eq:dealt-with}
  \pr \big(\exists \, F \subseteq P : F \rsum F \subseteq \complement{A_p} \big) \le {s \choose |F|} (1 - p)^{|F \rsum F|} \to 0
\end{equation}
as $n \to \infty$.
To get to this point, however, we must also choose this generalised arithmetic progression in a previous union bound.
Hence, our second requirement for each $F$ is that $F \rsum F$ is sufficiently large to pay for the number of choices for the generalised arithmetic progression $P$.

As we can count generalised arithmetic progressions with dimension $d$ and size $s$ by choosing the $a_0, a_1, \ldots, a_d$ and $l_1, \ldots, l_d$, there are at most $(n s)^{d + 1}$ of them.
Temporarily ignoring the number of choices for the fingerprint inside the progression (which we already dealt with in \eqref{eq:dealt-with}), this amounts to requiring that $F$ satisfies
\begin{equation*}
  (n s)^{d + 1} (1 - p)^{|F \rsum F|} \to 0
\end{equation*}
as $n \to \infty$.
While these conditions may seem too strict -- small $F$ with large $F \rsum F$ for every $X$ -- this is exactly what we are able to do.

To state the actual \namecref{stmt:fingerprints}, we need to relate the dimension of the progression to some notion of dimension for $X$.
The notion that we use is the \Freiman dimension of $X$, but to state its definition, we must first define what are \Freiman homomorphisms and \Freiman isomorphisms.
A \Freiman homomorphism is a function ${\phi : X \to Y}$ such that for every $a_1, a_2, b_1, b_2 \in X$, $$a_1 + b_1 = a_2 + b_2 \quad \text{ implies } \quad \phi(a_1) + \phi(b_1) = \phi(a_2) + \phi(b_2). $$
A \Freiman isomorphism is a bijection $\phi$ such that both $\phi$ and its inverse $\phi^{-1}$ are \Freiman homomorphisms.
The \Freiman dimension of $X$, $\dimF(X)$, is then defined to be the largest $d \in \N$ for which there is a full rank subset of $\Z^d$ that is \Freiman isomorphic to $X$.

It is furthermore useful to define the robustness of the \Freiman dimension of $X$: we say that $X$ has $\eps$-robust \Freiman dimension $d$ if $\dimF(X) \ge d$ and there is no $X' \subseteq X$ such that $|X'| \ge (1 - \eps)|X|$ and $\dimF(X') < d$.
In words, this means that the \Freiman dimension of $X$ is at least $d$ even if we remove an $\eps$ proportion of its elements.

With these definitions, we can state our fingerprint \namecref{stmt:fingerprints}:

\begin{restatable}{thm}{fingerprints}\label{stmt:fingerprints}
  Let $n$ be a large enough prime and let $k, d \in \N$.
  For every $0 < \gamma < 1/2$, there exists $C = C(\gamma) > 0$ such that the following holds for all $m \ge (d + 1)k/2$ and $C/k < \eps < \gamma$.
  For each $d$-dimensional generalised arithmetic progression $P \subseteq \Z_n$, there exists a collection $\cF = \cF_{k, m, \eps}(P)$ of subsets of $P$ satisfying:
  \begin{enumerate}[(a)]
    \item For every $F \in \cF$, we have
    \begin{equation}\label{eq:fingerprint-reqs}
      |F| \le C \eps^{-1} \sqrt{ m \log m } \quad \text{ and } \quad |F \rsum F| \geq \frac{(1 - \gamma) (d + 1) k}{2}.
    \end{equation}
    \item For all $X \in \binom{P}{k}$ with $|X \rsum X| \leq m$ and $\eps$-robust \Freiman dimension $d$, there exists $F \in \cF$ such that $F \subseteq X$. \label{item:fingerprint-is-contained-in-X}
  \end{enumerate}
\end{restatable}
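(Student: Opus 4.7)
The plan is to build $\cF$ by fiat as the collection of \emph{all} subsets of $P$ meeting the two conditions in (a), and then to exhibit, for each admissible $X$, a subset $F \subseteq X$ satisfying both. Concretely, set
\begin{equation*}
  \cF := \left\{ F \subseteq P : |F| \le C\eps^{-1}\sqrt{m \log m} \text{ and } |F \rsum F| \ge (1-\gamma)(d+1)k/2\right\}\!.
\end{equation*}
Part (a) is then automatic, so everything reduces to showing that each $X \in \binom{P}{k}$ with $|X \rsum X| \le m$ and $\eps$-robust \Freiman dimension $d$ contains such a subset. I would pick $F$ uniformly at random from $\binom{X}{s}$, where $s := C\eps^{-1}\sqrt{m \log m}$, and show that with positive probability $|F \rsum F|$ meets the required lower bound.

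The argument combines two ingredients. The first is a supersaturation statement: for an appropriately chosen threshold $\theta$, the collection of $\theta$-popular sums
\begin{equation*}
  P_\theta := \big\{z \in X + X : r_X(z) \ge \theta\big\}, \qquad r_X(z) := \big|\{(x, y) \in X \times X : x + y = z\}\big|,
\end{equation*}
has size $|P_\theta| \ge (1 - \gamma/2)(d+1)k/2$. I would derive this from \Cref{stmt:freiman-lemma-via-few-translates} by first applying it to $X$ to produce a small set of translates $T \subseteq X$ with $|T| \le 4(d+1)/\gamma'$ and $|X + T| \ge (1-5\gamma')(d+1)k/2$, and then pruning from $X$ the elements concentrated on ``unpopular'' sums. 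Because the latter pool has total weight at most $\theta |X + X|$, at most $\eps k$ elements of $X$ are heavily involved in unpopular representations; the $\eps$-robust \Freiman dimension hypothesis guarantees that we may remove them and still have rank $\ge d$, so \Cref{stmt:freiman-lemma-via-few-translates} re-applied to the pruned set yields the desired count of popular sums.

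The second ingredient is a standard random-subset computation. For every $z \in P_\theta$, greedy pairing produces at least $r_X(z)/2 \ge \theta/2$ disjoint pairs $(x_i, y_i) \in X \times X$ with $x_i + y_i = z$. The event $\{z \notin F + F\}$ forces every such pair to have at least one endpoint outside $F$, so
\begin{equation*}
  \pr(z \notin F + F) \le \left(1 - \left(\frac{s}{k}\right)^2\right)^{\theta / 2} \le \exp\!\left(-\frac{s^2 \theta}{2 k^2}\right).
\end{equation*}
With the chosen parameters, $s^2\theta/(2k^2) \ge 2\log m$ provided $C$ is taken sufficiently large, and a union bound over the (at most $m$) popular sums yields $P_\theta \subseteq F + F$ with probability at least $1/2$. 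On this event, $|F \rsum F| \ge |F + F| - |F| \ge |P_\theta| - s \ge (1-\gamma)(d+1)k/2$, the $s$-term being absorbed into the $\gamma$-slack since $s \ll k$. A random $F$ therefore succeeds with positive probability, so a suitable fingerprint exists in $X$.

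The main obstacle is the supersaturation claim. \Cref{stmt:freiman-lemma-via-few-translates} produces a lower bound on $|X + T|$ but says nothing about the multiplicities $r_X$; converting the former into a lower bound on $|P_\theta|$ requires a careful iterative removal step that discards elements responsible for unpopular representations while preserving enough rank to reapply \Cref{stmt:freiman-lemma-via-few-translates}. This is precisely where $\eps$-robustness of $\dimF(X)$ enters, and the delicate bookkeeping -- ensuring that the cumulative loss remains $O(\gamma)$ rather than compounding -- is the technically demanding part of the proof.
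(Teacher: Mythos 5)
Your overall plan mirrors the paper's: define $\cF$ to be every subset of $P$ satisfying the conditions in (a), then exhibit a random $F \subseteq X$ that captures all popular sums and invoke a supersaturation statement to show $|F \rsum F|$ is large. The paper realizes exactly this scheme but packages the supersaturation step into a separate theorem (\Cref{stmt:supsat}), which it applies to $Y = F \rsum F$ rather than to $P_\theta$ directly; the two orderings are equivalent once one observes that ``$F \rsum F$ contains all $\theta$-popular sums'' forces the number of missing pairs in $F \rsum F$ to be at most $\theta\,|X+X|$.

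The genuine gap is in your derivation of the supersaturation bound $|P_\theta| \ge (1-\gamma/2)(d+1)k/2$. The one-shot pruning you sketch does not go through: after removing the ``bad'' elements, \Cref{stmt:freiman-lemma-via-few-translates} applied to the pruned set $X'$ gives a lower bound for $|X' + T'|$, but an element of $X' + T'$ carries no guarantee of high multiplicity $r_X$ in the original $X$. That lemma controls the \emph{size} of a sumset, not the \emph{popularity} of its members; removing vertices of high unpopular degree does not make the remaining pairwise sums popular. The argument that actually works, and which you gesture at in your closing paragraph without carrying out, is iterative and is exactly the proof of the paper's \Cref{stmt:supsat}: assume $|P_\theta|$ (or $|Y|$) is small, apply \Cref{stmt:freiman-lemma-via-few-translates} (or \Cref{stmt:jing-mudgal-freiman} in the bounded-$d$ regime) to the current set $X_i$, use pigeonhole to pick $x^{(i+1)} \in T_i$ whose translate misses $\Omega(k)$ elements of the target set, remove $x^{(i+1)}$, and repeat for $\eps k/4$ rounds. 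Each missed element is an unpopular sum, hence a missing pair; the $\eps$-robustness of $\dimF(X)$ licenses the $\eps k/4$ removals; and the accumulated count of unpopular pairs exceeds what the threshold $\theta\cdot m$ permits, a contradiction. You identify this as ``the technically demanding part'' but the proposal does not close it, and this iterative bookkeeping is precisely what \Cref{stmt:supsat} is for.

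Two smaller remarks. First, the paper samples $F$ as a $q$-random (binomial) subset of $X$ rather than a uniform $s$-subset; this sidesteps the negative-correlation issue in your bound $\pr(z \notin F+F) \le (1-(s/k)^2)^{\theta/2}$, which for fixed-size sampling does not follow from independence and needs a hypergeometric or negative-association argument (easily repaired, but worth flagging). Second, the slack ``$s \ll k$'' you quote when absorbing the $-s$ term into $\gamma$ should really be $s \ll \gamma(d+1)k$; this does hold for the parameter range used in \Cref{sec:main-theorem}, but it is not automatic from $s \ll k$ alone and needs to be checked against the upper bound $|F| \le C\eps^{-1}\sqrt{m\log m}$.
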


We will deduce this \namecref{stmt:fingerprints} from the following supersaturation result.

\begin{restatable}{thm}{supsat}\label{stmt:supsat}
  For every $0 < \gamma < 1$, there exists a constant ${c = c(\gamma) > 0}$ such that, for every sufficiently large set $X \subseteq \Zmodn$, every $d \in \N$ and every $0 < \eps < \gamma$, the following holds.
  If $X$ has $\eps$-robust \Freiman dimension $d$ and $Y \subseteq X + X$ satisfies
  \begin{equation}\label{eq:pairs}
    \big| \big\{(x_1, x_2) \in X^2 : x_1 + x_2 \not \in Y \big\} \big| \le c \eps |X|^2,
  \end{equation}
  then $|Y| \ge (1 - \gamma)(d + 1)|X|/2$.
\end{restatable}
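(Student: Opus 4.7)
The plan is to deduce \Cref{stmt:supsat} from \Cref{stmt:freiman-lemma-via-few-translates} via a cleaning argument. I will remove from $X$ the elements incident to too many bad pairs, apply the few-translates theorem to the cleaned set $X^*$ (whose Freiman dimension is still at least $d$ by the robustness hypothesis), and then subtract the contribution of the remaining bad pairs.

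Concretely, since $\dimF(X) \ge d$, a Freiman isomorphism allows me to assume $X \subseteq \R^d$ with $\rank(X) = d$; all relevant quantities ($|X|$, the set $Y$, and the bad-pair count) are preserved. Writing $N := |X|$ and $B_x := |\{x' \in X : x + x' \notin Y\}|$ for $x \in X$, hypothesis~\eqref{eq:pairs} gives $\sum_{x} B_x \le c \eps N^2$. Let $Z := \{x \in X : B_x > 2cN\}$; Markov's inequality yields $|Z| \le \eps N / 2$, so $X^* := X \setminus Z$ satisfies $|X^*| \ge (1 - \eps/2) N$ and $B_t \le 2cN$ for every $t \in X^*$. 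Because $|Z| \le \eps N$, the $\eps$-robustness of the Freiman dimension of $X$ ensures $\dimF(X^*) \ge d$, and (after a further Freiman isomorphism) I may view $X^* \subseteq \R^d$ as a rank-$d$ set.

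Setting $\gamma_0 := \gamma/20$, and assuming $d$ is large enough that $\gamma_0 > d^{-1/3}$ (the case of bounded $d$ being handled by a separate, simpler argument using \Cref{stmt:freimans-lemma} directly), I apply \Cref{stmt:freiman-lemma-via-few-translates} to $X^*$ to obtain $T \subseteq X^*$ with $|T| \le 4(d+1)/\gamma_0$ and
\begin{equation*}
  |X^* + T| \ge (1 - 5\gamma_0)\frac{(d+1)|X^*|}{2} \ge (1 - 5\gamma_0)(1 - \eps/2)\frac{(d+1)N}{2}.
\end{equation*}
Every element of $(X^* + T) \setminus Y$ arises as $x + t$ for some bad pair $(x, t) \in X^* \times T$, so
\begin{equation*}
  \big|(X^* + T) \setminus Y\big| \le \sum_{t \in T} B_t \le |T| \cdot 2cN \le \frac{8c(d+1)N}{\gamma_0},
\end{equation*}
which combined with the previous display yields
\begin{equation*}
  |Y| \ge \frac{(d+1)N}{2}\left[(1 - 5\gamma_0)(1 - \eps/2) - \frac{16c}{\gamma_0}\right].
\end{equation*}
Choosing $c = c(\gamma) := \gamma^2/1280$, the bracketed expression is at least $1 - 5\gamma_0 - \eps/2 - 16c/\gamma_0 = 1 - \gamma/4 - \eps/2 - \gamma/4 \ge 1 - \gamma$, using $\eps < \gamma$ in the final step, which is exactly the desired conclusion.

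The main obstacle is the tight accounting among three sources of loss---$(1 - \eps/2)$ from restricting to $X^*$, $(1 - 5\gamma_0)$ from the few-translates theorem, and $16c/\gamma_0$ from the bad pairs---all of which must fit within the single budget $\gamma$. The threshold $2cN$ in the definition of $Z$ (rather than the more obvious $cN$) is precisely what makes $|Z| \le \eps N/2$ instead of $\eps N$; this extra slack converts $\eps < \gamma$ into the usable inequality $\eps/2 < \gamma/2$ and leaves a $\gamma/2$ buffer that can then be split between $\gamma_0$ and $c$.
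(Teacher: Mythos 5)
Your proposal takes a genuinely different route from the paper. The paper argues by contradiction and iterates: assuming $|Y|$ is small, it builds a sequence $x^{(1)},\ldots,x^{(t)}$ with $t=\eps|X|/4$, at each step applying the few-translates theorem to the shrinking set $X_i$ and pigeonholing to extract a single element that accounts for $\Omega(c|X|)$ new bad pairs, summing to $c\eps|X|^2$ in total. You instead argue directly in a single pass: clean out the $\le\eps N/2$ high-degree elements via Markov, apply the few-translates theorem once to the cleaned set $X^*$, and account for the loss $|(X^*+T)\setminus Y|$ by the fact that $T\subseteq X^*$ consists of low-degree elements. Your one-shot version is cleaner and the accounting $1-\gamma = 1 - 5\gamma_0 - \eps/2 - 16c/\gamma_0$ is transparent; the paper's iterative framing is a bit more roundabout but makes the role of $\eps$-robustness more visibly uniform across a sequence of sets. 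Both rely essentially on the same engine, \Cref{stmt:freiman-lemma-via-few-translates}, and your arithmetic and the Markov-cleaning step are correct, as is the key observation that every element of $(X^*+T)\setminus Y$ witnesses a bad pair in $X^*\times T$.

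There is, however, one real gap: you dismiss the bounded-$d$ case (when $\gamma_0\le d^{-1/3}$, i.e.\ $d\le (20/\gamma)^3$) as ``handled by a separate, simpler argument using \Cref{stmt:freimans-lemma} directly,'' but \Cref{stmt:freimans-lemma} alone does not yield a few-translates statement, and your argument genuinely needs one. With $T=X^*$ the bound $|(X^*+T)\setminus Y|\le c\eps N^2$ can overwhelm $(d+1)N/2$ when $\eps$ is of constant order; and the trivial $r_Y(y)\le N$ argument only gives $|Y|\ge(1-c\eps)N$, which suffices for $d=1$ but already fails for $d=2$ where the target is $(1-\gamma)\cdot 3N/2$. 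What is actually needed here is a result like \Cref{stmt:jing-mudgal-freiman}, which for fixed $d$ provides $T$ of size $C(d)$ with $|X^*+T|\ge(d+1)|X^*|-5(d+1)^3$; plugging that into your framework and taking $c$ small relative to $\gamma/C(d)$ closes the case. This is precisely what the paper does, so the fix is immediate, but the gap as written is a genuine misattribution of which tool is doing the work.
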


In words, whenever $Y \approx X + X$ in the sense of \eqref{eq:pairs}, then it also (almost) satisfies the lower bound given by \Freiman's lemma (\Cref{stmt:freimans-lemma}), up to a factor of 1/2.
Once we have this \namecref{stmt:supsat}, the proof of \Cref{stmt:fingerprints} is substantially simpler than usual for a similarly strong container theorem.
In fact, it is also the only part of the proof that we could omit by using the container theorem for sumsets~\cite[Theorem 4.2]{Cam20}.
The self-contained proof is so simple that we include it here in the overview to motivate the usefulness of \Cref{stmt:supsat}.

\begin{proof}[Proof of \Cref{stmt:fingerprints} assuming \Cref{stmt:supsat}]
  Our aim is to show that, for every $X \in \binom{P}{k}$ with $|X \rsum X| \le m$ and $\eps$-robust \Freiman dimension $d$, there exists a subset $F \subseteq X$ such that \eqref{eq:fingerprint-reqs} holds.
  We will show that a $q$-random subset $F$ of $X$ satisfies the first inequality by a suitable choice of $q$, and the second one via an application of \Cref{stmt:supsat} with $Y = F \rsum F$.

  In order to apply \Cref{stmt:supsat} to $Y = F \rsum F$, we need to show that $Y$ satisfies \eqref{eq:pairs}.
  To this end, we will first prove that a random choice of $F$ makes it unlikely for $F \rsum F$ to miss any $y$ such that
  \begin{equation}\label{eq:popular}
    \rho_{X \rsum X}(y) \ge \frac{2 \eps k^2}{C m},
  \end{equation}
  where $$\rho_{X \rsum X}(y) = \big| \big \{(x_1, x_2) \in X^2 : x_1 \neq x_2,\, x_1 + x_2 = y \big \} \big|$$ is the number of pairs of distinct elements of $X$ that sum to a given $y \in X \rsum X$.
  Hereon, we will refer to the $y$ satisfying \eqref{eq:popular} as the popular elements of $X \rsum X$.
  We will take a $q$-random subset where either
  \begin{equation}\label{eq:def-q}
    q = \frac{C \sqrt{ m \log m }}{2 \eps k}
  \end{equation}
  if the right-hand side is at most 1, and $q = 1$ otherwise (a trivial case which we will ignore).

  Notice that we can upper bound the number of missing pairs by:
  \begin{equation}\label{eq:missing-pairs}
    \big| \{(x_1, x_2) \in X^2 : x_1 + x_2 \not \in F \rsum F\}\big| \le |X| + \sum_{y \in (X \rsum X) \setminus (F \rsum F)} \rho_{X \rsum X}(y)
  \end{equation}
  where the term $|X|$ comes from the pairs $(x, x)$ for $x \in X$.
  Once we have proved that $F \rsum F$ contains all popular $y \in X \rsum X$, we will have an upper bound on $\rho_{X \rsum X}(y)$ for every $y \in (X \rsum X) \setminus (F \rsum F)$.
  Inserting this into \eqref{eq:missing-pairs}, we deduce that the number of missing pairs is at most
  \begin{equation}\label{eq:missing-pairs-for-supsat}
    |X| + \sum_{y \in (X \rsum X) \setminus (F \rsum F)} \rho_{X \rsum X}(y) < |X| + \frac{2 \eps k^2}{C m}|X \rsum X| \le c \eps |X|^2,
  \end{equation}
  if we take $C \ge 3/c$, where $c = c(\gamma)$ is the constant in \Cref{stmt:supsat}, since $|X| = k$, $|X \rsum X| \le m$ and $\eps k > C$.
  The upper bound in \eqref{eq:missing-pairs-for-supsat} is what we need to apply \Cref{stmt:supsat}; doing so gives
  \begin{equation*}
    |F \rsum F| \ge \frac{(1 - \gamma)(\dimF(X) + 1)k}{2},
  \end{equation*}
  from which we can use our assumption that $\dimF(X) \ge d$ to complete the proof.

  It therefore only remains to prove our claim that with positive probability $F$ contains all popular elements of $X \rsum X$ while also being sufficiently small.
  Notice that our choice of $F$ as a $q$-random subset of $X$ implies, for each $y \in X \rsum X$, that
  \begin{equation}\label{eq:prob-missing}
    \pr(y \not \in F \rsum F) = (1 - q^2)^{\rho_{X \rsum X}(y)/2}
  \end{equation}
  because (i) each pair $(x_1, x_2) \in X^2$ that satisfies $x_1 + x_2 = y$ and $x_1 \neq x_2$ is counted twice in $\rho_{X \rsum X}(y)$ and (ii) the probability that such a pair is chosen in $F$ is $q^2$.

  Take $B_F$ to be the random variable counting the number of $y \in X \rsum X$ such that
  \begin{equation*}
    \rho_{X \rsum X}(y) \ge \frac{2 \eps k^2}{C m} \quad \text{ and } \quad y \not \in F \rsum F.
  \end{equation*}
  By linearity of expectation and \eqref{eq:prob-missing}, we have
  \begin{align*}
    \E[B_F] = \sum_{\substack{y \in X \rsum X \\ \rho_{X \rsum X}(y) \ge 2 \eps k^2 / C m}} \pr(y \not \in F \rsum F) \le m (1 - q^2)^{\eps k^2 / C m} \le m \exp\left(- \frac{\eps q^2 k^2}{C m}\right),
  \end{align*}
  where we used $|X \rsum X| \le m$ to bound the number of terms in the sum.

  Since $\eps q^2 k^2/(C m) = (C/4 \eps) \log m$ by our choice~\eqref{eq:def-q} of $q$, it follows by Markov's inequality that
  \begin{equation*}
     \pr(B_F > 0) \le m^{-1},
  \end{equation*}
  where we also used that $\eps < 1$ and $C \ge 8$.
  Using Chernoff's inequality to bound the probability that $|F| > 2 q k$, we deduce that
  \begin{equation*}
    \pr(|F| > 2 q k) + \pr(B_F > 0) < 1,
  \end{equation*}
  which proves that there exists a fingerprint $F \subseteq X$ satisfying \eqref{eq:fingerprint-reqs}, as required.
\end{proof}

Before moving on with the overview, let us briefly discuss the robustness property in \Cref{stmt:supsat}.
This condition may at first seem unnatural, but the following simple construction shows that some form of robustness is necessary: take $X$ to be the union of $d - 1$ random points with a progression $P$, and define $Y = P + P$.
We have simultaneously \whp that $\dimF(X) = d$, $|Y| \approx 2|X|$ and the sum of almost all pairs of elements of $X$ are in $Y$.

With \Cref{stmt:fingerprints} in hand, we can now check that for the family $\cF$ of all sets satisfying \eqref{eq:fingerprint-reqs}, our requirements for the fingerprints $F$ are satisfied.
Recall that what we need from the size of the sumset is
\begin{equation*}
  (ns)^{d + 1} (1 - p)^{|F \rsum F|} \to 0,
\end{equation*}
where $s = |P|$ and $d = \dim(P)$.
Modern formulations of \Freiman's theorem tells us that we can take $s \le \exp(\sigma^{1 + o(1)})k$, which in our range of $\sigma$ and $k$ corresponds to $n^{o(1)}$.
However, we can only apply \Cref{stmt:fingerprints} to sets $X \subseteq P$ such that $\dimF(X) \ge \dim(P)$ (ignoring the robustness for the moment).
Standard formulations of \Freiman's theorem only guarantee that $\dim(P)$ is at most $\sigma[X]$, which would not be good enough for us.

Fortunately, \citet{Cha02} proved a version of \Freiman's theorem that guarantees that $\dim(P)$ is at most $\dimF(X)$, at the cost of a weaker bound on the size of $P$ as compared to the more recent results of \citet{Sch11} and \citet{San13}.
The impact of the suboptimal size of the progression is a slight reduction in the range of $p$ for which our proof works.

\begin{thm}[\citet{Cha02}, see~{\cite[Proposition 1.3]{GT06}}]\label{stmt:chang-freiman-ruzsa}
  There exists $C' > 0$ such that for all finite subsets $X \subseteq \Z$ with $|X| \ge 2$ and $\sigma[X] \le \sigma$, there is a $d$-dimensional generalised arithmetic progression $P$ such that $X \subseteq P$,
  \begin{equation*}
    |P| \le \exp\hspace{-2pt}\big(C' \sigma^2 (\log \sigma)^3 \big) |X|
  \end{equation*}
  and ${d \le \dimF(X)}$.
\end{thm}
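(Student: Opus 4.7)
The plan is to follow Chang's Fourier-analytic proof of Freiman's theorem, appended with a projection step at the end to reduce the ambient dimension from the doubling bound down to $\dimF(X)$. The argument naturally breaks into three stages: transferring $X$ to a Freiman-isomorphic model in a finite cyclic group, extracting a Bohr set inside $2X' - 2X'$ via Bogolyubov's lemma (whose dimension is controlled by Chang's bound on the large spectrum), and then projecting the resulting generalised arithmetic progression to its effective dimension.

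For the first stage, the Plünnecke--Ruzsa inequality gives $|8X - 8X| \le \sigma^{16}|X|$, so Ruzsa's modelling lemma produces a Freiman $8$-isomorphism $\phi \colon X \to X' \subseteq \Z/N\Z$ for some prime $N \le \sigma^{O(1)}|X|$. Since Freiman $8$-isomorphisms preserve all additive quadruples (and more generally all bounded-length additive relations), they also preserve Freiman dimension, so $\dimF(X') = \dimF(X)$; this invariance is what makes the desired bound $d \le \dimF(X)$ accessible through a cyclic-group argument.

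For the second stage, apply Chang's lemma to the $\eta$-spectrum of $X'$ (with $\eta$ a suitable small constant) to extract a dissociated set $\Lambda \subseteq \widehat{\Z/N\Z}$ of size $|\Lambda| = O(\sigma^2 \log \sigma)$ with $B(\Lambda, 1/4) \subseteq 2X' - 2X'$ via Bogolyubov's inequality. Using Minkowski's second theorem, this Bohr set is approximated from inside by a proper generalised arithmetic progression $Q_0$ of dimension $|\Lambda|$ and volume at least $(c/|\Lambda|)^{|\Lambda|}$ times the ambient box. Ruzsa's covering lemma then shows that $X'$ is contained in a union of $\sigma^{O(1)}$ translates of $Q_0$, yielding a GAP $Q \supseteq X'$ of dimension $|\Lambda|$ and size $|Q| \le \exp\!\big(C'\sigma^2(\log\sigma)^3\big)|X|$. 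Pulling back by $\phi^{-1}$ lifts $Q$ to a GAP in $\Z$ of the same size and dimension containing $X$.

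Finally, to bring the dimension down from $|\Lambda|$ to $\dimF(X)$, write $Q = a_0 + \pi(B)$ where $\pi \colon \Z^{d'} \to \Z$ is linear and $B \subseteq \Z^{d'}$ is a box, taking $Q$ to be proper up to a constant-factor size loss. Then $\pi$ restricted to $\pi^{-1}(X) \cap B$ is a Freiman isomorphism, so $\rank(\pi^{-1}(X) \cap B) = \dimF(X')  = \dimF(X)$. Selecting a Minkowski-reduced basis of the sublattice $L \subseteq \Z^{d'}$ spanned by $\pi^{-1}(X) - \pi^{-1}(X)$ and building the smallest box in $L$ containing $\pi^{-1}(X) \cap B$, we obtain a $\dimF(X)$-dimensional GAP $P$ containing $X$ with $|P| = O(|Q|)$. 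The main obstacle is this last step: the reduction to a lower-dimensional sublattice must not inflate the size bound, which is where successive-minima estimates from the geometry of numbers are essential to guarantee that the reduced-basis box has volume comparable to that of the original, preserving the target bound.
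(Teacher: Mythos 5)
The paper does not prove this statement: it is imported wholesale from Chang, with Green and Tao's exposition cited as a secondary reference, and is used as a black box thereafter (the paper's Appendix~A proves the $\Zmodn$ analogue, \Cref{stmt:green-ruzsa-with-dim(P)<=dimF(X)}, via Candela--Sisask and discrete John's theorem, but never the $\Z$ statement itself). So there is no in-paper argument to compare against. Your reconstruction follows the standard Chang/Green--Tao route --- Pl\"unnecke--Ruzsa plus Ruzsa modelling, Chang's spectral lemma and Bogolyubov, Ruzsa covering, and a final geometric dimension reduction --- and that is indeed the correct skeleton for obtaining both the size bound and $d \le \dimF(X)$.

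Two quantitative claims in your last step are not right as written, though they can be repaired. First, for $\pi$ restricted to $\pi^{-1}(X)\cap B$ to be a Freiman isomorphism you need $\pi$ injective on $B+B$, i.e.\ $Q$ must be $2$-proper, and passing from an arbitrary GAP of dimension $d'$ to a $2$-proper one is \emph{not} a constant-factor size loss: in general it costs a factor $d'^{\,O(d'^2)}$, which with $d' = O(\sigma^2 \log\sigma)$ would blow the budget. You should instead extract a $2$-proper progression directly from the Bohr set via Minkowski's second theorem, where the dimension is still $|\Lambda|$ and the cost remains within $\exp(O(\sigma^2(\log\sigma)^3))$. Second, the assertion that the reduced-basis box has ``volume comparable to that of the original'' is false: covering the rank-$r$ lift of $X$ by a box in $L$ costs a factor of order $r^{O(r^2)}$ (this is precisely the discrete John's theorem the paper records as \Cref{stmt:discrete-john}), not $O(1)$. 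Fortunately $r \le \dimF(X) \le 2\sigma$ by \Freiman's lemma, so $r^{O(r^2)} \le \exp(O(\sigma^2\log\sigma))$ still fits the target exponent $\exp(C'\sigma^2(\log\sigma)^3)$ --- but your stated justification is not the reason it works. Finally, you only need $\rank(\pi^{-1}(X)\cap B) \le \dimF(X)$ (which does hold and is exactly what the argument uses); the claimed equality need not hold and should be dropped.
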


Now, we can use the lower bound on $|F \rsum F|$ given by \Cref{stmt:fingerprints} to obtain
\begin{equation*}
  (1 - p)^{|F \rsum F|} \le \exp\hspace{-2pt}\big( -(1 - \gamma) (d + 1) k p/2 \big),
\end{equation*}
which, choosing $k = (2 + o(1)) \log_{1 / (1 - p)} n$ (a little larger than $(2 \log n)/p$), is at most
\begin{equation}\label{eq:probability-of-fingerprint}
  \exp\big( - (1 + \gamma) (d + 1) \log n \big).
\end{equation}
Replacing this back in the previous equation, and recalling that $s = n^{o(1)}$, thus yields
\begin{equation*}
  (ns)^{d + 1} (1 - p)^{|F \rsum F|} \le n^{-\gamma} \to 0.
\end{equation*}

The attentive reader may have noticed that \citeauthor{Cha02}'s \namecref{stmt:chang-freiman-ruzsa} is stated for subsets of $\Z$ instead of $\Zmodn$.
To use it for subsets of $\Zmodn$, we will use instead a version of Green--Ruzsa's theorem (\Freiman's theorem for Abelian groups) due to \citet{CS13}.
It does not bound $\dim(P) \le \dimF(X)$ directly, though, but it yields a proper progression, which we can combine with a \namecref{stmt:discrete-john} by \citet{TV08} to obtain what we need (see \Cref{stmt:green-ruzsa-with-dim(P)<=dimF(X)}).

Moreover, it is not true that every $X$ has $\eps$-robust \Freiman dimension equal to $\dimF(X)$.
This is not a problem, however, since it is straightforward to prove (see \Cref{stmt:robust-subset}) that every set $X$ has a large enough subset $X'$ with $\eps$-robust \Freiman dimension $d' \in \N$.

Finally, let us check that the size of $F$ given to us by \Cref{stmt:fingerprints} is compatible with the range of $p$ in \Cref{stmt:main-result}.
To do so, we need to show that, as $n \to \infty$,
\begin{equation*}
  {s \choose |F|} (1 - p)^{|F \rsum F|} \to 0.
\end{equation*}
As we already know from \eqref{eq:probability-of-fingerprint} that the second term is at most $n^{-d}$, we need
\begin{equation}\label{eq:choices-for-fingerprint}
  {s \choose |F|} \le s^{|F|} \le \exp\big( C \eps^{-1} ( m \log m )^{1/2} \log s \big) = n^{o(1)}.
\end{equation}
The second inequality in \eqref{eq:choices-for-fingerprint} follows from \eqref{eq:fingerprint-reqs} in \Cref{stmt:fingerprints}, while the third is a consequence of our choice of $k$ and the bound on $s$ given by \Cref{stmt:chang-freiman-ruzsa}.
Indeed, we have
\begin{equation}\label{eq:overview-bound}
  C \eps^{-1} ( m \log m )^{1/2} \log s \ll k^{3/4} = o(\log n),
\end{equation}
because $m \le k^{1 + \beta}$ for $X \in \cX_1$, which implies that $\log s \le k^{3 \beta}$, and we can take $\eps = k^{-2\beta}$ and $C$ to be a constant.
Our choice of $\beta = 1/40$ is therefore more than sufficient to prove \eqref{eq:overview-bound}.

At this point, one might ask why we decided to keep track of the constant $C$ up to the final computation.
Note that it is crucial that the value of $C$ does not increase too quickly with $d$ growing, since otherwise \eqref{eq:overview-bound} would not hold for large $d$.
Recall that in the proof of \Cref{stmt:fingerprints}, we took $C \approx c^{-1}$.
The constant $c$ is essentially given to us by our supersaturation result and its value is tied to how many translates we need in our approximate bound for \Freiman's lemma (\Cref{stmt:freiman-lemma-via-few-translates}).

To see where the dependency of $c$ on the number of translates comes from, consider the contrapositive of \Cref{stmt:supsat}: if the set $Y$ is small, then it misses many pairs $(x_1, x_2) \in X^2$.
By \Cref{stmt:freiman-lemma-via-few-translates}, we can find a small $T \subseteq X$ such that
\begin{equation*}
  |X + T| - |Y| \ge \gamma (d + 1)|X|.
\end{equation*}
The pigeonhole principle then ensures us that there is some $x \in T$ satisfying
\begin{equation*}
  \big| (X + x) \setminus Y \big| \ge \frac{\gamma (d + 1)|X|}{|T|} = c |X|
\end{equation*}
for $c = \gamma (d + 1)/|T|$.
Now, if we add the $c |X|$ pairs determined by $(X + x) \setminus Y$ to our collection of missing pairs $(x_1, x_2) \in X^2$ such that $x_1 + x_2 \not \in Y$, remove $x$ from $X$ and repeat this procedure $t$ times, we would have at least $c |X| t$ missing pairs in total.
Recalling that $X$ has $\eps$-robust \Freiman dimension $d$, we can take $t = \eps |X|$ while maintaining $\dimF(X) \ge d$, and hence obtain $c \eps |X|^2$ missing pairs, as required.

The above sketch proof of \Cref{stmt:supsat} shows that to prove our supersaturation result with $c$ being an absolute constant, we really need the size of $T$ to be a linear function of $d$, as in \Cref{stmt:freiman-lemma-via-few-translates}.
In fact, for \eqref{eq:overview-bound}, a bound of the form $d^{O(1)}$ would suffice.
Unfortunately, the \namecref{stmt:jing-mudgal-freiman} of \citeauthor{JM23+}~(\Cref{stmt:jing-mudgal-freiman}) gives a super-polynomial dependency on $d$.
Nevertheless, we use their result to prove \Cref{stmt:supsat} in the range $d = O(1)$.

The only missing part in this overview is a proof of \Cref{stmt:freiman-lemma-via-few-translates} itself.
Instead of sketching it, we complete the picture with the (short) proof of its weaker version in the next section.

\section{A simple proof of a weaker \Freiman's lemma via few translates}\label{sec:freiman-via-few-translates}

This \namecref{sec:freiman-via-few-translates} is dedicated to proving a weaker form of \Cref{stmt:freiman-lemma-via-few-translates}.
Although it is not strong enough to prove the upper bound in our main \namecref{stmt:main-result}, it does imply a weaker version where the constant $2$ in \eqref{eq:main-result} is replaced by a $6$.
The deduction of this weaker result is the same as that of \Cref{stmt:main-result} (see \Cref{sec:supsat,sec:main-theorem}) simply replacing \Cref{stmt:freiman-lemma-via-few-translates} by \Cref{stmt:weak-freiman}.

\begin{thm}\label{stmt:weak-freiman}
  Let $d, r \in \N$ and let $X \subseteq \R^d$ be a finite set with $\rank(X) \ge r$.
  There exists a set $T \subseteq X$ such that $|T| \le r/2 + 1$ and
  \begin{equation*}
    |X + T| \ge \frac{r|X|}{6}.
  \end{equation*}
\end{thm}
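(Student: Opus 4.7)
My plan is to build $T$ greedily. After translating $X$ so that $0 \in X$, initialise $T_0 = \{0\}$, so $|X + T_0| = |X|$; then for $i = 1, \ldots, \lfloor r/2 \rfloor$, pick $t_i \in X$ maximising the gain $g_i := |(X + t_i) \setminus (X + T_{i-1})|$ and set $T_i = T_{i-1} \cup \{t_i\}$. If each greedy step produces at least $|X|/3$ new elements, then $|T| \le r/2 + 1$ and
\begin{equation*}
  |X + T| \ge |X|\Big(1 + \tfrac{r}{6}\Big) \ge \tfrac{r}{6}|X|,
\end{equation*}
as required.

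The main claim to justify is that the greedy step succeeds: provided $|X + T_{i-1}| < r|X|/6$ and $i \le \lfloor r/2 \rfloor$, some $t_i \in X$ gives $g_i \ge |X|/3$. I would argue by contradiction. If every $x \in X$ satisfies $|(X + x) \cap (X + T_{i-1})| > 2|X|/3$, then summing over $x$ and double-counting pairs $(a,b) \in X^2$ with $a + b \in X + T_{i-1}$ yields
\begin{equation*}
  \sum_{y \in X + T_{i-1}} r_X(y) > \tfrac{2}{3}|X|^2,
\end{equation*}
where $r_X(y) := |\{(a,b) \in X^2 : a + b = y\}|$. I would then combine this with \Cref{stmt:freimans-lemma} applied to $X$, giving $|X + X| \ge (r+1)|X| - \binom{r+1}{2}$ and hence that the average of $r_X$ over $X + X$ is at most about $|X|/(r+1)$; a Markov-type estimate then shows that most of $X + X$ has $r_X(y) = O(|X|/r)$.

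The main obstacle is closing the contradiction for large $r$. The naive bound $r_X \le |X|$ only gives $|X + T_{i-1}| > 2|X|/3$, which contradicts $|X + T_{i-1}| < r|X|/6$ only when $r \le 4$. For larger $r$ I would split $X + X$ into a ``typical'' part, where $r_X(y) = O(|X|/r)$ contributes at most $O(|X + T_{i-1}| \cdot |X|/r)$ to the above sum, and a ``popular'' part which, by a Freiman/additive energy consideration coming again from the rank $\ge r$ hypothesis, has bounded size and cannot account for a large chunk of $2|X|^2/3$ unless $|X + T_{i-1}|$ is already substantial. Tuning the buckets so that the two contributions together remain below $2|X|^2/3$ under the standing assumption $|X + T_{i-1}| < r|X|/6$ is the delicate point. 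If the bucketing proves recalcitrant, an alternative route is to fix $r + 1$ affinely independent points $0, x_1, \ldots, x_r \in X$ (guaranteed by the rank hypothesis), take $T = \{0\}$ together with a uniformly random subset of $\{x_1, \ldots, x_r\}$ of size $\lfloor r/2 \rfloor$, and note that every $y \in X + \{0, x_1, \ldots, x_r\}$ is covered by such a $T$ with probability at least $1/2$; this reduces the task to a lower bound on $|X + \{0, x_1, \ldots, x_r\}|$ of the form $\ge r|X|/3$, which should follow from a Freiman-style argument on the affinely independent skeleton.
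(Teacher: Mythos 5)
Your greedy setup and the first reduction (if every step gains at least $|X|/3$ new elements, the bound follows) are fine, and the opening move is the same as the paper's: translate so $0 \in X$ and grow $T$ one element at a time. The gap is in the justification that a greedy step succeeds as long as $|X + T_{i-1}| < r|X|/6$. You correctly derive that a universal failure of the greedy step would force
$\sum_{z \in X + T_{i-1}} r_X(z) > \tfrac{2}{3}|X|^2$,
but closing the contradiction then requires an upper bound on $\sum_{z \in S} r_X(z)$ in terms of $|S|$ that uses only $\rank(X) \ge r$ — and no such bound exists. The rank hypothesis does not control additive energy: a set $X$ that is an arithmetic progression of size $|X|/2$ together with $r - 1$ generic points has $\rank(X) = r$ yet $r_X(y) \gtrsim |X|$ for $\gtrsim |X|$ values of $y$, so the ``popular'' bucket is neither small in size nor small in individual weight, and no bucketing threshold makes both contributions simultaneously beat $\tfrac{2}{3}|X|^2$. (In this example the greedy step does in fact succeed — by adding a generic point — but for a geometric reason your counting argument cannot see.) The alternative route you sketch, taking a random half of a fixed affinely independent skeleton $\{0, x_1, \ldots, x_r\}$, needs the bound $|X + \{0, x_1, \ldots, x_r\}| \ge r|X|/3$ for an \emph{arbitrary} such skeleton, which is precisely a \Freiman-via-few-translates statement in disguise; it is not a consequence of affine independence alone and would itself need the kind of argument you are trying to produce.

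What the paper does instead is entirely geometric and avoids additive-energy-style counting. \Cref{stmt:greedy-phase} shows the greedy step always gains at least $(1-\gamma)|X|/2$ as long as $|X \cap \Span(T)| \le \gamma|X|$, by finding a hyperplane through $\Span(T)$ separating $X$ into $X_+ \cup X_* \cup X_-$ and translating by the extreme point of the larger half-space. When the greedy stalls, it is exactly because $|X \cap \Span(T)|$ has become large — and then the paper \emph{discards} the greedy $T$ entirely and constructs a new translate set $Z$ from scratch (\Cref{stmt:disjoint-translates-with-X*}), picking one point per additional dimension to produce $r/2$ pairwise disjoint translates of $X \cap \Span(T)$. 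This case split — greedy while the span stays sparse, disjoint translates of the dense span once it becomes heavy — is the idea your proposal is missing, and I don't see how to avoid it by pure double-counting.
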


The idea of the proof is to add elements of $X$ to $T$ one by one, picking in each step a new translate that adds a substantial number of new elements to the sumset.
This suggests a greedy argument, picking $x \in X \setminus T$ that increases the size of the sumset the most.
However, it is not clear how to show that we can make substantial progress for a sufficient number of steps.
Previous arguments, such as the one by \citet*[Theorem 1.1]{FLPZ24}, show that if progress stops, then $X$ must have small doubling; as we must handle polynomially large $\sigma[X]$, such strategies do not work in our case.
Instead, we adopt a variation of this idea that incorporates the geometry of the set, allowing us to reach conclusions that do not rely on the doubling of the set and depend only on its rank.

Roughly speaking, in the proof of \Cref{stmt:greedy-phase} below, we will show that we can add a new element to $T$ so that it increases the size of $X + T$ by a factor proportional to $|X \setminus \Span(T)|$.
Notice that, as long as $|T| < \rank(X)$, we can take a non-trivial step.

Now, if we have enough steps that add $|X|/3$ elements to the sumset, then after $r/2$ steps we will have both the bound for the sumset and for $|T|$ in \Cref{stmt:weak-freiman}.
Otherwise, as we will show that every step makes at least $|X \setminus \Span(T)|/2$ ``progress'', it follows that, after the last ``good'' step, we must have $|X \cap \Span(T)| \ge |X|/3$.
In this scenario, we define
\begin{equation*}
  X^* = X \cap \Span(T) \qquad \text{ and } \qquad X' = X \setminus \Span(T),
\end{equation*}
and observe that
\begin{equation}\label{eq:rank-X'}
  \rank(X') \ge \rank(X) - \rank(T) \ge r - r/2 = r/2,
\end{equation}
since $|T| \le r/2$.
At this point, we now discard our old $T$ and greedily choose elements of a new set of translates $Z \subseteq X'$ each increasing the rank of $X^* \cup Z$ by one.
Each new element that we add yields a disjoint, translated copy of $X^*$ in the sumset.
We then obtain
\begin{equation*}
  |X + Z| \ge |X^* + Z| = \frac{r|X^*|}{2} \ge \frac{r|X|}{6},
\end{equation*}
because, by \eqref{eq:rank-X'}, we can greedily add $r/2$ elements to $Z$, and $|X^*| \ge |X|/3$.

We proceed by formalizing the notion that either the greedy argument suffices, or a significant part of $X$ lies in the span of the elements already in $T$.
The version we state below is more general than we need to prove \Cref{stmt:weak-freiman} because we will reuse it when proving \Cref{stmt:freiman-lemma-via-few-translates}.

\begin{prop}\label{stmt:greedy-phase}
  Let $d, r \in \N$, let $\gamma > 0$ and let $X \subseteq \R^d$ be a finite set with $\rank(X) \ge r$.
  If $T \subseteq \R^d$ satisfies $0 \in T$, $\rank(T) < r$ and $|X \cap \Span(T)| \le \gamma |X|$, then there exists an element $x_* \in X$ such that
  \begin{equation*}
    \big|X + \big(T \cup \{x_*\}\big)\big| \ge |X + T| + \frac{(1 - \gamma)|X|}{2}.
  \end{equation*}
\end{prop}

The key idea here is to find a co-dimension 1 hyperplane $\mathcal{H}$ which intersects $X$ exactly in $X \cap \Span(T)$.
We can find such a hyperplane because $X$ is finite and $\rank(T) < \rank(X)$.
Note that the two open half-spaces defined by $\mathcal{H}$ induce a partition $X' = X_+ \cup X_-$.
Without loss of generality, we assume that $|X_+| \ge |X_-|$, and our goal is now to add, for each point in $X_+$, a new element to the sumset.
To achieve this, we let $u$ be a normal vector of $\mathcal{H}$, choose $y_+ \in X_+$ to maximise $\inner{y_+, u}$, and observe that $y_+ + X_+$ is disjoint from $X + T$.

\begin{proof}
  First, choose a vector $u \in \R^d$ to be the normal of the hyperplane $\mathcal{H}$ discussed above.
  It should satisfy the following properties
  \begin{enumerate}
    \item $u \neq 0$, \label{item:non-null-u}
    \item ${\inner{z, u} = 0}$, for all $z \in \Span(T)$, and \label{item:contain-span-of-translates}
    \item $\inner{z, u} \neq 0$, for all $z \in X \setminus \Span(T)$. \label{item:non-null-remainder-of-x}
  \end{enumerate}
  There is a $u$ satisfying \cref{item:non-null-u,item:contain-span-of-translates} since $0 \in T$ and $\rank(T) < r$.
  We can furthermore find a $u$ for which \cref{item:non-null-remainder-of-x} also holds because there are only finitely many elements in $X \setminus \Span(T)$, as $X$ itself is finite.

  We partition $X = X_+ \cup X_* \cup X_-$ according to the position of its elements relative to the hyperplane defined by $\inner{x, u} = 0$, \ie
  \begin{align*}
    X_+ = \{x \in X : \inner{x, u} > 0\}, \\
    X_* = \{x \in X : \inner{x, u} = 0\}, \\
    X_- = \{x \in X : \inner{x, u} < 0\}.
  \end{align*}
  Assume without loss of generality that $|X_+| \ge |X_-|$ by swapping the sign of $u$ if necessary, and take $y_+ \in X_+$ to be a maximiser of $f(y) = \inner{y, u}$.

  We claim that $X_+ + y_+$ is disjoint from $X + T$.
  In fact, if we let $a \in X_+, b \in X$ and $c \in T$, then:
  \begin{align*}
    \inner{a + y_+, u} & > \inner{y_+, u},     &  & \text{as } a \in X_+,              \\
                       & \ge \inner{b, u},     &  & \text{by the maximality of } y_+,  \\
                       & = \inner{b + c, u}, &  & \text{as we chose } u \text{ with } c \perp u.
  \end{align*}
  Therefore, if $|X_+| \ge (1 - \gamma) |X|/2$, we can pick $x_* = y_+$ and prove the \namecref{stmt:greedy-phase}:
  \begin{align*}
    |X + (T \cup \{x_*\})| & \ge |X + T| + |X_+ + x_*| \\
                           & \ge |X + T| + \frac{(1 - \gamma) |X|}{2}.
  \end{align*}

  Otherwise, since we took $|X_+| \ge |X_-|$ and $X_+ \cup X_* \cup X_-$ partition $X$, we have
  \begin{equation}\label{eq:size-of-X*}
    |X_*| = |X| - |X_-| - |X_+| > \gamma |X|.
  \end{equation}
  By our choice of $u$ and the definition of $X_*$, we have $X_* = X \cap \Span(T)$, which, by \eqref{eq:size-of-X*}, contradicts our assumption that $|X \cap \Span(T)| \le \gamma |X|$.
\end{proof}

The 1-dimensional perspective we took in this proof suggests that instead of adding a single maximiser $y_+$ to $T$ at each step, we should pick both the maximiser $y_+$ and the minimiser $y_-$.
Unfortunately, if we add $\{y_+, y_-\}$ to $T$, then we could increase $\rank(T)$ by two instead of one, causing the greedy argument to run for only half as many steps.

We need the following simple lemma to handle the case when $|X \cap \Span(T)|$ is large.
\begin{lem}\label{stmt:disjoint-translates-with-X*}
  Let $d, r, s \in \N$, let $\gamma > 0$ and let $X \subseteq \R^d$ be a finite set with $\rank(X) \ge r$.
  If $X_* \subseteq X$ with $\rank(X_*) < s$, then there exists a set $Z \subseteq X \setminus X_*$ such that
  \begin{equation*}
    |X_* + Z| \ge (r - s)|X_*|
  \end{equation*}
  and $|Z| \le r - s$.
\end{lem}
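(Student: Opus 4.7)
My plan is to construct $Z$ greedily, adding elements $z_1, z_2, \ldots, z_{r - s}$ of $X$ one at a time in such a way that each new translate $X_* + z_i$ is disjoint from the previous ones. I would ensure this by picking $z_i$ outside the affine hull $A_{i - 1}$ of $X_* \cup \{z_1, \ldots, z_{i - 1}\}$: any collision $x + z_i = y + z_j$ with $x, y \in X_*$ and $j < i$ rearranges to $z_i = z_j + (y - x) \in A_{i - 1}$, which would contradict the choice of $z_i$.

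To see that the procedure never gets stuck, I would note that adding a single point raises the affine dimension by at most one, so $\rank(A_{i - 1}) \le \rank(X_*) + (i - 1) < s + (i - 1)$. As long as $i \le r - s$, this is strictly less than $r \le \rank(X)$, which forces $X \setminus A_{i - 1}$ to be non-empty. Since $X_* \subseteq A_{i - 1}$, each chosen $z_i$ automatically lies in $X \setminus X_*$, and therefore $Z := \{z_1, \ldots, z_{r - s}\}$ satisfies both $Z \subseteq X \setminus X_*$ and $|Z| \le r - s$, as required.

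Combining the disjointness condition with $|X_* + z_i| = |X_*|$ then yields
\begin{equation*}
  |X_* + Z| = \sum_{i = 1}^{r - s} |X_* + z_i| = (r - s)|X_*|,
\end{equation*}
which matches the lower bound in the lemma. I do not foresee any real obstacle: the argument is just a short greedy construction together with a one-line affine-geometry check that translates ``outside the affine hull'' into ``disjoint translates''. The parameter $\gamma$ in the statement plays no role in either the hypothesis or the conclusion and so can safely be ignored.
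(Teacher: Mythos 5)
Your proof is correct and follows essentially the same greedy argument as the paper; you work with affine hulls where the paper uses linear spans of $X_* \cup Z_i$ (and so tracks the possible $+1$ in rank from taking the span of a set not containing $0$), but the disjointness argument and the rank bound that guarantees the process can run for $r-s$ steps are the same. Your observation that $\gamma$ is unused is also correct.
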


\begin{proof}
  First, note that for any set $S \subseteq \R^d$ such that $\rank(S) < s$, we have $\rank(\Span(S)) \le s$.
  That is, taking the $\Span$ of a set that does not contain $0$ may increase its rank by 1.

  We will construct $Z = \{z_1, \ldots, z_{r - s}\} \subseteq X \setminus X_*$ one element at a time, also defining $$Z_i = \{z_1, \ldots, z_i\} \quad \text{ and } \quad W_i = \Span(X_* \cup Z_i)$$ for $i \in \{0, \ldots, r - s\}$.
  Notice that if $i < r - s$, then $\rank(W_i) \le s + i < r$, by our assumption on $X^*$ and the definition of $W_i$.
  Therefore, there exists $z_{i + 1} \in X \setminus W_i$, as $\rank(X) \ge r$.
  Since $X^* \subseteq W_i$, this implies that $X^* + z_{i + 1}$ and $W_i$ are disjoint, and hence that $X^* + z_{i + 1}$ is disjoint from $X^* + Z_i$.
  This readily implies the \namecref{stmt:disjoint-translates-with-X*} because
  \begin{equation*}
    |X_* + Z| = \sum_{i = 1}^{r - s} |X_* + z_i| = (r - s) |X_*|,
  \end{equation*}
  where in the first equality we repeatedly used that $X_* + z_{i + 1}$ and $X_* + Z_i$ are disjoint.
\end{proof}

The proof of \Cref{stmt:weak-freiman} now follows easily from \Cref{stmt:greedy-phase} and \Cref{stmt:disjoint-translates-with-X*}:

\begin{proof}[Proof of \Cref{stmt:weak-freiman}]
  We may assume that $0 \in X$; notice that this translation does not change $\rank(X)$.
  First, we construct sets $T_i = \{x_0, \ldots, x_i\} \subseteq X$ by choosing $x_0 = 0$ and $x_{i + 1}$ as given by \Cref{stmt:greedy-phase}.
  In details, let $t$ be the first index for which
  \begin{equation}\label{eq:greedy-condition-for-weaker-freiman}
    |X \cap \Span(T_t)| > \frac{|X|}{3}.
  \end{equation}
  Note that while \eqref{eq:greedy-condition-for-weaker-freiman} does not hold, we have $\rank(T_i) < \rank(X)$ since $T_i \subseteq X$.
  We may therefore apply \Cref{stmt:greedy-phase} to $T_i$ with $\gamma = 1/3$ to define $x_i$, which then implies that
  \begin{equation}\label{eq:greedy-phase-of-weaker-freiman}
    |X + T_i| \ge \frac{|T_i|\,|X|}{3} = \frac{(i + 1)|X|}{3}
  \end{equation}
  for all $i \le t$.
  Hence, if $t \ge r/2$, then, by \eqref{eq:greedy-phase-of-weaker-freiman}, we have
  \begin{equation*}
    |X + T_t| \ge \frac{(t + 1)|X|}{3} \ge \frac{r|X|}{6},
  \end{equation*}
  and taking $T = T_t$ concludes the proof.
  We may therefore assume that $t < r/2$.

  In this case, we want to apply \Cref{stmt:disjoint-translates-with-X*} with $X_* = X \cap \Span(T_t)$ and $s = r/2$.
  Note that, as $\rank(T_t) \le t$ and $0 \in T_t$, we have $$\rank(X_*) \le \rank(\Span(T_t)) \le t < r/2,$$ where in the last inequality we used our assumption that $t < r/2$.
  This application yields a set $Z \subseteq X \setminus X_*$ such that $|Z| \le r - s = r/2$ and
  \begin{align*}
    |X + Z| & \ge |X_* + Z|        & & \text{ because } X_* \subseteq X, \\
            & \ge \frac{r|X_*|}{2} & & \text{ by  \Cref{stmt:disjoint-translates-with-X*}}, \\
            & \ge \frac{r|X|}{6}   & & \text{ since } |X_*| \ge |X|/3 \text{ by \eqref{eq:greedy-condition-for-weaker-freiman}}.
  \end{align*}
  Taking $T = Z$ concludes the proof.
\end{proof}

\section{Improving the bound for \Freiman's lemma via few-translates}\label{sec:improved-freiman-via-few-translates}

To obtain the sharp upper bound for $\alpha(\Gamma_p)$, the bound $|X + T| \ge r|X|/6$ is not enough; we need at least $|X + T| \ge (1 - 5\gamma)(r + 1)|X| / 2$ for small $\gamma > 0$.
In this section, we describe the overall approach and state the intermediate results we require to improve the bound.
Once we have stated these results, we will show that assuming them is sufficient to prove \Cref{stmt:freiman-lemma-via-few-translates} -- this will motivate their statements, since neither they nor their proofs are straightforward.
We will prove that these results hold in subsequent sections.

To discuss the methods that we will use to improve the bound, it will be convenient to first make some definitions.

\begin{defi}
  For finite sets $X, W \subseteq \R^d$, we define
  \begin{equation*}
    Z(X, W) = \Pi_{W^\perp}(X),
  \end{equation*}
  where $\Pi_U(V)$ denotes the orthogonal projection of $V$ onto $U$.
  We also partition $\R^d$ into equivalence classes in that projection, denoting these\footnote{To avoid confusion between the equivalence class $[z]$ for $z \in \R^d$ and $[m] = \{1, \ldots, m\}$ for $m \in \N$, we avoid using the latter notation.} by
  \begin{equation*}
    [z]_W = \big\{x \in \R^d : \Pi_{W^\perp}(x) = z\big\},
  \end{equation*}
  and partition X into equivalence classes in the same way:
  \begin{equation*}
    \equivc{z}_{W, X} = [z]_W \cap X.
  \end{equation*}
\end{defi}

It will be convenient to omit the dependency of those definitions on $X$ and $W$ whenever these sets are clear from context, leaving us with the notation $Z, [z], \equivc{z}$.
We also refer to $[z]$ as a ``fibre'', and say that a fibre $[z]$ is ``empty'' if $z \not \in Z$.

The start of the proof follows the same idea as in \Cref{sec:freiman-via-few-translates}: to obtain $T_{i + 1}$ from $T_i$, at each step we add the element provided by \Cref{stmt:greedy-phase}.
We do this for $t$ steps, where $t$ is the first index for which ${|X \cap \Span(T_t)| > \gamma|X|}$.
If $t \ge r$, then, by \Cref{stmt:greedy-phase}, we have $|X + T_t| \ge (1 - \gamma)(t + 1)|X|/2$, and we are done.
Otherwise, we define
\begin{equation}\label{eq:T*-def}
  T^* = T_{t - 1} \qquad \text{ and } \qquad W = \Span(T_t),
\end{equation}
and we look into the set of non-empty fibres $Z = Z(X, W)$.
Notice that $W$ is neither $\R^d$ nor $\{0\}$ since $0 < t < r$.
The rest of the argument is divided into two different cases.

If there are many distinct non-empty fibres in $Z = \{z_1, \ldots, z_m\}$, then we use the following generalisation of \Cref{stmt:disjoint-translates-with-X*}.
Whereas for that previous result we needed one point per dimension, in \Cref{stmt:many-fibres-are-easier} we take one point $y_i \in \equivc{z_i}$ per non-empty fibre to be our set $T$, and show that such a choice yields disjoint translates $y_i + X_*$ for $X_* = X \cap W$.

\begin{prop}\label{stmt:many-fibres-are-easier}
  Let $d, r \in \N$, let $\gamma > 0$, and let $X \subseteq \R^d$ be a finite set with $\rank(X) \ge r$.
  If $W \subseteq \R^d$ is such that $|X \cap W| \ge \gamma |X|$ and $|Z| \ge (r + 1)/\gamma$, where $Z = Z(X, W)$, then there exists $T \subseteq X$ such that
  \begin{equation*}
    |X + T| \ge (r + 1)|X|
  \end{equation*}
  and $|T| = (r + 1)/\gamma$.
\end{prop}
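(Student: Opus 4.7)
The plan is to follow the strategy foreshadowed just above the statement: pick one representative $y_i \in X$ from each of $(r+1)/\gamma$ distinct non-empty fibres of $\Pi_{W^\perp}$, set $T = \{y_1, \ldots, y_{(r+1)/\gamma}\}$, and argue that the translates $X_* + y_i$ of $X_* := X \cap W$ are pairwise disjoint. This will immediately yield
$$|X + T| \ge |X_* + T| = \tfrac{r+1}{\gamma} \, |X_*| \ge (r+1)|X|,$$
using the hypothesis $|X_*| \ge \gamma|X|$ and the inclusion $X_* \subseteq X$.

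The only real content is a simple geometric observation. Treating $W$ as the subspace $\Span(W)$ — which matches the intended use case $W = \Span(T_t)$, where $\Pi_{W^\perp}$ is a bona fide linear projection — every $x \in X_* \subseteq W$ satisfies $\Pi_{W^\perp}(x) = 0$. Consequently $\Pi_{W^\perp}(X_* + y_i) = \{\Pi_{W^\perp}(y_i)\} = \{z_i\}$ is a single point, so the translate $X_* + y_i$ lies entirely inside the fibre $[z_i]_W$. Provided the $z_i$ are chosen distinct, the fibres, and hence the translates, are pairwise disjoint, and each translate has cardinality exactly $|X_*|$.

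Concretely, the proof I would write first picks arbitrary distinct $z_1, \ldots, z_{(r+1)/\gamma} \in Z$ (possible since $|Z| \ge (r+1)/\gamma$) and then selects $y_i \in \equivc{z_i}_{W, X}$, which is non-empty by the very definition of $Z$. This produces a set $T \subseteq X$ of the required size $(r+1)/\gamma$; the disjointness check and the additive count above then complete the proof. I do not foresee a genuine obstacle here: the hard combinatorial work in this programme is concentrated in the proof of \Cref{stmt:greedy-phase} and its refinements, whereas this \namecref{stmt:many-fibres-are-easier} is essentially clean bookkeeping once the fibre picture is in place. The only point requiring a line of care is the linearity of $\Pi_{W^\perp}$ on sums $X_* + y$, which is immediate for the subspace interpretation of $W$ used throughout.
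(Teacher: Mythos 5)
Your proposal is correct and matches the paper's argument in essence: both pick one representative per fibre, observe that the translates of the zero-fibre portion of $X$ land in disjoint fibres, and sum. The only cosmetic difference is that the paper translates the full fibre $\equivc{0} = X \cap [0]_W$ (which contains $X \cap W$, so $|\equivc{0}| \ge \gamma|X|$ as well), whereas you translate $X_* = X \cap W$ directly; either choice closes the estimate identically.
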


\begin{proof}
  Define $m = (r + 1)/\gamma$. Take $\{z_1, \ldots, z_m\} \subseteq Z$ and, for each $z_i$, pick some arbitrary $y_i \in \equivc{z_i}$.
  Note that $X \cap W \subseteq \equivc{0}$.
  As we have chosen $y_i \in [z_i]$, we have $\equivc{0} + y_i \subseteq [z_i].$
  Doing the same with $i \neq j$ shows that
  \begin{equation}\label{eq:y_i-y_j-disjoint}
    \big(\equivc{0} + y_i\big) \cap \big(\equivc{0} + y_j\big) \subseteq [z_i] \cap [z_j] = \emptyset.
  \end{equation}

  We can therefore conclude that taking $T = \{y_1, \ldots, y_m\} \subseteq X$ satisfies
  \begin{align*}
    |X + T| & \ge \big|\equivc{0} + T\big| & & \text{as } \equivc{0} \subseteq X \text{ by definition}, \\
            & \ge \sum_{i = 1}^m \big| \equivc{0} + y_i \big| & & \text{by \eqref{eq:y_i-y_j-disjoint}}, \\
            & \ge m \gamma |X| & & \text{as } \big| \equivc{0} \big| \ge \gamma|X|, \text{ by assumption,} \\[0.3ex]
            & = (r + 1)|X|,
  \end{align*}
  as required.
\end{proof}

The case when there are few non-empty fibres, \ie $Z = Z(X, W)$ is small, is more complex.
Letting $r_W = \rank(W)$ and recalling \eqref{eq:T*-def}, we have
\begin{equation*}
  |X + T^*| \ge (1 - \gamma)\frac{(r_W + 1)|X|}{2}.
\end{equation*}
Therefore, to obtain a final set of translates $T$ such that $|X + T| \ge (1 - 5\gamma)(r + 1)|X|/2$, we need to find a set $T'$ that roughly satisfies
\begin{equation}\label{eq:optimistic-freiman-lemma}
  |X + T'| \ge \frac{(r - r_W)|X|}{2}.
\end{equation}
To combine $T^*$ and $T'$, though, we must take care not to count overlaps between the sumsets $X + T'$ and $X + T^*$ more than once.

The content of the next \namecref{stmt:phase-3} shows that choosing $T'$ as discussed above is possible.
It says that we can choose a $T'$ which almost attains \eqref{eq:optimistic-freiman-lemma} while avoiding not only $X + T^*$, but the whole of $X + \equivc{0}$ -- recall that $T^* \subseteq W \cap X \subseteq \equivc{0}$.
\begin{prop}\label{stmt:phase-3}
  Let $d, r, r_W \in \N$, $\eta > 0$, and let $X \subseteq \R^d$ be a finite set with $\rank(X) \ge r$.
  Let also $W \subseteq \R^d$ be a subspace with dimension $r_W$, and $Z = Z(X, W)$.
  If $\big|\equivc{z}\big| \leq \eta |X|$ for every $z \in Z \setminus \{0\}$, then there exists $T' \subseteq X$ such that
  \begin{equation}\label{eq:phase-3}
    \Big|\big(X + T'\big) \setminus \big(X + \equivc{0}\big)\Big| \ge \frac{r - r_W}{2} \Big(\big|X\big| - \big|\equivc{0}\big|\Big) - \eta|Z||X|
  \end{equation}
  and $|T'| \le |Z|$.
\end{prop}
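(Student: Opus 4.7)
The plan is to reduce to a weighted analogue of \Cref{stmt:greedy-phase} in the quotient $W^\perp$. Set $\pi = \Pi_{W^\perp}$ and let $Z = \pi(X)$, with each $z \in Z$ assigned the weight $w(z) = |\equivc{z}|$; then $\sum_{z \in Z} w(z) = |X|$ and $w(0) = |\equivc{0}|$. Since the kernel of $\pi$ has dimension $r_W$, we have $\rank(Z) \ge r - r_W$. Starting from $T_0'' = \{0\}$ and iterating a weighted version of the greedy argument for at most $r - r_W$ steps, I would build $T'' \subseteq Z$ with $|T''| \le |Z|$ together with a lower bound on the weighted sumset
\begin{equation*}
  \sum_{z \in (Z + T'') \setminus Z} \max\{w(z - z'') : z'' \in T'',\, z - z'' \in Z\} \ge \frac{r - r_W}{2}\,(|X| - |\equivc{0}|) - \eta |Z||X|.
\end{equation*}
This weighted greedy and its analysis are essentially the content of the lemmas of \Cref{sec:weighted-freiman,sec:offsetting-the-loss-of-the-zero-fibre}.

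The geometric reason this suffices is that, at step $i$, choosing the hyperplane normal $u_i$ to $\Span(T_{i-1}'')$ (with the sign picked so that the majority $w$-weight lies on the positive side) and taking $z_i^* \in Z_+^{(i)}$ to be an $\inner{\cdot, u_i}$-maximiser makes $z_i^* + Z_+^{(i)}$ disjoint not only from $Z + T_{i-1}''$ but also from $Z$ itself: every element of $z_i^* + Z_+^{(i)}$ has inner product with $u_i$ strictly larger than $\inner{z_i^*, u_i}$, which is at least as large as $\inner{z, u_i}$ for every $z \in Z$. Thus the weighted contribution of each greedy step automatically lies in $(Z + T'') \setminus Z$, so no further bookkeeping is needed to avoid collisions with $Z = \pi(X + \equivc{0})$. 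The subtlety handled by the zero-fibre lemma is that the bound must be measured against $|X| - |\equivc{0}|$ rather than $|X|$, since a naive weighted greedy could spend all of its gain on the trivial translate $z'' = 0$, which corresponds to lifting some $y(0) \in \equivc{0}$ and to a sumset $X + y(0) \subseteq X + \equivc{0}$ that contributes nothing to \eqref{eq:phase-3}.

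I then lift $T''$ to $T' \subseteq X$ by picking, for each $z'' \in T''$, an arbitrary representative $y(z'') \in \equivc{z''}$. For each $z \in (Z + T'') \setminus Z$ and each $z'' \in T''$ with $z - z'' \in Z$, the translate $\equivc{z - z''} + y(z'')$ is contained in the fibre of $X + T'$ over $z$ and has size $w(z - z'')$. Since $\equivc{z} = \emptyset$ for $z \notin Z$, the fibre of $X + \equivc{0}$ over such $z$ is empty too, so the entire contribution $\max_{z''} w(z - z'')$ lies in $(X + T') \setminus (X + \equivc{0})$. Summing across fibres in $(Z + T'') \setminus Z$ and applying the weighted-sumset estimate above yields \eqref{eq:phase-3}.

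The main obstacle is the weighted sumset bound itself: the hyperplane argument of \Cref{stmt:greedy-phase} must be upgraded so that, at each step, the weighted gain is at least $\tfrac{1}{2}(|X| - |\equivc{0}|)$, despite the fact that $w(0) = |\equivc{0}|$ may dominate the total $w$-weight. Offsetting this dominance is exactly the role of the technical lemmas promised by \Cref{sec:offsetting-the-loss-of-the-zero-fibre,sec:weighted-freiman}; once they are in place, the fibre-by-fibre assembly described above is routine and delivers the stated inequality with $|T'| \le |Z|$.
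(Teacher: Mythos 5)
Your reduction to the projected set $Z = \Pi_{W^\perp}(X)$ weighted by $w(z) = |\equivc{z}|$, the lift from $T'' \subseteq Z$ to $T' \subseteq X$ via arbitrary fibre representatives, and the observation that $\Pi_{W^\perp}(X + \equivc{0}) \subseteq Z$ (so fibres over $(Z + T'') \setminus Z$ automatically avoid $X + \equivc{0}$) all match the paper's proof and are correct.

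The gap is in the weighted sumset lower bound, which you propose to obtain from a hyperplane greedy analogous to \Cref{stmt:greedy-phase}. The per-step gain of $\tfrac{1}{2}\big(|X| - |\equivc{0}|\big)$ that your plan requires is already unavailable at step two: once $z_1^*$ is added, the null part $Z \cap \Span(T_1'')$ contains $0$ and $z_1^*$ and possibly many other non-zero fibres, so its weight strictly exceeds $|\equivc{0}|$ and grows further at each step. A fibre $z \ne 0$ absorbed into $\Span(T_i'')$ at step $i$ is charged against every subsequent step, so the cumulative shortfall over $r - r_W$ steps can be as large as $\Theta\big((r - r_W)\,\eta|Z||X|\big)$ (take, say, many fibres on the line $\Span\{0, z_1^*\}$ together with $r - r_W$ independent directions), whereas \eqref{eq:phase-3} permits only $\eta|Z||X|$. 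The paper sidesteps the greedy entirely: it chooses $T'$ with one representative of \emph{every} non-zero non-empty fibre (so $|T'| = |Z^*| \le |Z|$, no attempt to be smaller), reduces \eqref{eq:phase-3} to lower-bounding $\sum_{w \in (Z^* + Z^*) \setminus Z}\max_{z \in (w - Z^*) \cap Z^*}|\equivc{z}|$, and proves \Cref{stmt:weighted-freiman} — a bound on the full weighted doubling with no loss term — by induction on $|U|$, deleting a low-weight vertex of $\conv(U)$ via \Cref{obs:facet-intersection}, a technique genuinely different from the hyperplane greedy. The only loss, $\eta|Z||X|$, then arises once from discarding the $\le |Z|$ summands with $w \in Z$, each bounded by $\eta|X|$ via the hypothesis. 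Finally, \Cref{stmt:phase-2} and \Cref{sec:offsetting-the-loss-of-the-zero-fibre} are not used inside the proof of \Cref{stmt:phase-3}; they are combined with it later, in the proof of \Cref{stmt:freiman-lemma-via-few-translates}, so invoking them here misplaces them in the architecture.
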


To gain some intuition for why \Cref{stmt:phase-3} is true, consider the following.
Applying \Freiman's lemma in the projected world $W^\perp$, if we could obtain a lower bound depending on $|X|$, instead of $|Z|$ -- which is what we actually get -- then we would be done.
Notice, however, that this naive application considers every non-empty fibre $\equivc{z}$ as a single element to avoid repeated counts.
That is, for each $z_1 + z_2 \in Z + Z$, this approach counts only $x + y$ for a single choice of $x \in \equivc{z_1}$ and $y \in T' \cap \equivc{z_2}$.

In the proof of \Cref{stmt:phase-3}, we will show that we can instead consider $\equivc{z_1} + y$ and not overcount.
To achieve that, we assign to each $z \in Z$ a weight that encodes the size of the corresponding fibre $\equivc{z}$, and incorporate this weight into the proof of \Freiman's lemma in the projected world.
Although considering $\equivc{z_1} + y$ is still not enough to replace $|Z|$ with $|X|$ in the bound, we can crucially choose $z_1, z_2 \in Z$ whichever way we want, as long as $y \in \equivc{z_2}$.
We therefore choose the representation that maximises $\big|\equivc{z_1}\big|$ and show that this modification is enough to obtain the bound in \Cref{stmt:phase-3}.

The above discussion overlooks the removal of the zero fibre from the sumset, \ie it gives a lower bound for $|X + T'|$ instead of one for $\big|(X + T') \setminus (X + \equivc{0})\big|$.
To incorporate it, we must take into account our choice maximizing $\big|\equivc{z_1}\big|$, which imposes the (technical) requirement that every non-zero fibre has size at most $\eta |X|$.
As this is not always the case, in order to apply \Cref{stmt:phase-3}, we will change $W$ slightly to make it so: we will ``add'' to $W$ the large non-zero fibres until we have $\big| \equivc{z} \big| \le \eta |X|$ for all remaining $z \in Z$.
As we will do this economically, using this new $W$ will not hurt our bound too much.

Nevertheless, combining $T'$ and $T^*$ had another unanticipated cost: it incurred a negative $(r - r_W) \big| \equivc{0} \big| / 2$ term in the bound of \Cref{stmt:phase-3}.
The following \namecref{stmt:phase-2} shows that a simple change suffices to offset this loss: we add two extra points from each non-empty fibre to our final $T$.

\begin{prop}\label{stmt:phase-2}
  Let $d \in \N$, and let $X \subseteq \R^d$ be a finite set.
  For every $W \subseteq \R^d$ and $u^* \in \R^d$, there is $ T'' \subseteq X$ satisfying $|T''| \le 2|Z|$ and
  \begin{equation*}
    \Big|\big(\equivc{0} + T''\big) \setminus \big(X + \equivc{0}_*\big)\Big| \ge |Z| \Big(\big|\equivc{0}\big| - \big|\equivc{0}_*\big|\Big),
  \end{equation*}
  where $Z = Z(X, W)$ and $\equivc{0}_* = \{x \in \equivc{0} : \inner{x, u^*} = 0\}$.
\end{prop}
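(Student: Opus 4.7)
My plan is to build $T''$ from two extremal points of each non-empty fibre: for every $z \in Z$, I pick $y_+(z) \in \equivc{z}_{W,X}$ maximising $\inner{y, u^*}$ and $y_-(z) \in \equivc{z}_{W,X}$ minimising it, and set $T'' = \{y_+(z), y_-(z) : z \in Z\}$. This immediately gives $T'' \subseteq X$ and $|T''| \le 2|Z|$. I then partition $\equivc{0} \setminus \equivc{0}_*$ by the sign of $\inner{\cdot, u^*}$ into $\equivc{0}_+$ and $\equivc{0}_-$, and consider the set
\[
S \;=\; \big\{a + y_+(z) : a \in \equivc{0}_+,\, z \in Z\big\} \;\cup\; \big\{a + y_-(z) : a \in \equivc{0}_-,\, z \in Z\big\},
\]
which is clearly contained in $\equivc{0} + T''$. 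The goal is then to show $S \cap (X + \equivc{0}_*) = \emptyset$ and that $|S| = |Z|(|\equivc{0}| - |\equivc{0}_*|)$.

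For the avoidance claim, suppose $a + y_+(z) = x + y'$ with $a \in \equivc{0}_+$, $x \in X$ and $y' \in \equivc{0}_*$. Projecting onto $W^\perp$ forces $x \in \equivc{z}_{W,X}$, since both $a$ and $y'$ project to $0$. By maximality of $y_+(z)$ we have $\inner{x, u^*} \le \inner{y_+(z), u^*}$, and since $\inner{y', u^*} = 0$, the equation $a + y_+(z) = x + y'$ yields $\inner{a, u^*} \le 0$, contradicting $a \in \equivc{0}_+$. The $y_-$ case is symmetric.

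For distinctness inside $S$, projecting onto $W^\perp$ recovers $z$, so any collision must occur inside a single fibre; within a fibre, identical choices of $y_+$ (resp.\ $y_-$) force identical $a$'s. The only remaining case is a cross-sign collision $a_1 + y_+(z) = a_2 + y_-(z)$ with $a_1 \in \equivc{0}_+$ and $a_2 \in \equivc{0}_-$, which, after pairing with $u^*$, gives $\inner{a_1, u^*} - \inner{a_2, u^*} = \inner{y_-(z), u^*} - \inner{y_+(z), u^*}$, where the left side is strictly positive and the right side non-positive — a contradiction. Thus $|S| = |Z|(|\equivc{0}_+| + |\equivc{0}_-|) = |Z|(|\equivc{0}| - |\equivc{0}_*|)$, proving the bound. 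I do not anticipate a serious obstacle in this argument; the essential design choice is using \emph{both} sign classes simultaneously, since a one-sided $T'' = \{y_+(z) : z \in Z\}$ would capture only roughly half of $\equivc{0} \setminus \equivc{0}_*$ and fall short of the claimed factor of $|\equivc{0}| - |\equivc{0}_*|$ by a factor of two.
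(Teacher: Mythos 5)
Your proposal is correct and follows essentially the same route as the paper: you construct $T''$ from the same per-fibre maximiser/minimiser pairs and verify the same three disjointness facts (across fibres, between the positive and negative sign classes within a fibre, and against $X + \equivc{0}_*$). The only difference is presentational — the paper isolates the three facts as Observations~\ref{obs:disjointness}--\ref{obs:empty-intersections} and derives the bound by a weaker-inequality-then-remove argument, whereas you define the witness set $S$ directly and count it, which is a slightly more streamlined way of packaging the same reasoning.
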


It is not immediate that \Cref{stmt:phase-2} is really enough to make up for what we lost in \Cref{stmt:phase-3}, as we are removing $X + \equivc{0}_*$ instead of $X + T^*$, and $\equivc{0}_*$ is determined by $W$ and $u^*$.
What we need is that, for our choice of $u^*$, both $\big| \equivc{0}_* \big| \le \gamma |X|$ and $T_{t - 1} = T^* \subseteq \equivc{0}_*$.
This condition, combined with the following \namecref{stmt:gluing-phase-2-and-phase-3}, is enough to prove \Cref{stmt:freiman-lemma-via-few-translates}.
The proof of the \namecref{stmt:gluing-phase-2-and-phase-3} is a simple (but slightly tedious) manipulation of set relations.

\begin{obs}\label{stmt:gluing-phase-2-and-phase-3}
  For every $T', T'' \subseteq X$, we have
  \begin{equation}\label{eq:gluing-inequality}
    \big|\big(X + \hat{T}\big) \setminus \big(X + \equivc{0}_*\big)\big| \ge \big|\big(X + T'\big) \setminus \big(X + \equivc{0}\big)\big| + \big|\big(\equivc{0} + T''\big) \setminus \big(X + \equivc{0}_*\big)\big|
  \end{equation}
  where $\hat{T} = T' \cup T''$.
\end{obs}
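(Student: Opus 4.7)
The plan is to exhibit two disjoint subsets of $(X + \hat{T}) \setminus (X + \equivc{0}_*)$ whose sizes are exactly the two terms on the right-hand side of \eqref{eq:gluing-inequality}. Specifically, I would introduce
\begin{equation*}
  A = \big(X + T'\big) \setminus \big(X + \equivc{0}\big) \quad \text{and} \quad B = \big(\equivc{0} + T''\big) \setminus \big(X + \equivc{0}_*\big),
\end{equation*}
so that the right-hand side of \eqref{eq:gluing-inequality} equals $|A| + |B|$. The whole argument reduces to two inclusions and one disjointness claim.

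First I would verify that both $A$ and $B$ lie inside $(X + \hat{T}) \setminus (X + \equivc{0}_*)$. For $A$, the inclusion $A \subseteq X + T' \subseteq X + \hat{T}$ is immediate, and $A$ avoids $X + \equivc{0}_*$ because $\equivc{0}_* \subseteq \equivc{0}$, hence $X + \equivc{0}_* \subseteq X + \equivc{0}$, and $A$ avoids $X + \equivc{0}$ by construction. For $B$, since $T'' \subseteq X$ we have $\equivc{0} + T'' \subseteq \equivc{0} + X \subseteq X + \hat{T}$, and $B$ avoids $X + \equivc{0}_*$ by construction.

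Next I would check that $A \cap B = \emptyset$. The key observation is that $\equivc{0} \subseteq X$, so
\begin{equation*}
  B \subseteq \equivc{0} + T'' \subseteq X + \equivc{0}.
\end{equation*}
But every element of $A$ lies outside $X + \equivc{0}$, so $A$ and $B$ are disjoint. Combining the two inclusions with this disjointness gives
\begin{equation*}
  \big|\big(X + \hat{T}\big) \setminus \big(X + \equivc{0}_*\big)\big| \ge |A| + |B|,
\end{equation*}
which is exactly \eqref{eq:gluing-inequality}.

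There is no real obstacle here: the statement is a set-theoretic bookkeeping step whose only subtlety is remembering that $\equivc{0} \subseteq X$ (so that translates by $T'' \subseteq X$ of $\equivc{0}$ remain inside $X + \equivc{0}$) and that $\equivc{0}_* \subseteq \equivc{0}$ (so that removing the smaller set $X + \equivc{0}_*$ does not destroy the disjointness of $A$ from the first term). Both facts are immediate from the definitions of $\equivc{0}$ and $\equivc{0}_*$ appearing in \Cref{stmt:phase-3,stmt:phase-2}, so the proof is one short paragraph.
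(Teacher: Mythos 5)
Your overall strategy — exhibit $A$ and $B$ as disjoint subsets of $(X + \hat{T}) \setminus (X + \equivc{0}_*)$ and add their sizes — is exactly the one the paper uses, just phrased a bit more directly (the paper routes through an auxiliary set $S = (X+T') \setminus (\equivc{0}+T'')$, whereas you work with $A$ as-is). However, there is a slip in your chain of inclusions for $B$. You write ``$\equivc{0} + T'' \subseteq \equivc{0} + X \subseteq X + \hat{T}$'', and the second inclusion is false in general: $\equivc{0} + X$ is a subset of $X+X$, not of $X + \hat{T}$, and nothing forces $\equivc{0} \subseteq \hat{T}$ or $X \subseteq \hat{T}$. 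The correct (and equally short) chain is
\begin{equation*}
  \equivc{0} + T'' \subseteq X + T'' \subseteq X + \hat{T},
\end{equation*}
which uses $\equivc{0} \subseteq X$ for the first step and $T'' \subseteq \hat{T}$ for the second. With that one line repaired, your argument is complete and matches the paper's reasoning; the disjointness $A \cap B = \emptyset$ via $B \subseteq \equivc{0} + T'' \subseteq X + \equivc{0}$ and the exclusion of $X + \equivc{0}_*$ via $\equivc{0}_* \subseteq \equivc{0}$ are both fine as written.
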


\begin{proof}
  Note first that $\equivc{0} + T''$ is a subset of $X + \hat{T}$: this follows easily from
  \begin{equation*}
    \equivc{0} \subseteq X \quad \text{ and } \quad T'' \subseteq \hat{T}.
  \end{equation*}
  We therefore have
  \begin{equation*}
    \big|X + \hat{T}\big| \ge |S| + \big|\equivc{0} + T''\big|
  \end{equation*}
  where $S = \big(X + T'\big) \setminus \big(\equivc{0} + T''\big)$, and moreover,
  \begin{equation}\label{eq:disjoint-union-with-removal}
    \big|(X + \hat{T}) \setminus (X + \equivc{0}_*)\big| \ge \big|S \setminus (X + \equivc{0}_*)\big| + \big|(\equivc{0} + T'') \setminus (X + \equivc{0}_*)\big|.
  \end{equation}
  As the last term in \eqref{eq:disjoint-union-with-removal} already matches what appears in \eqref{eq:gluing-inequality}, the proof is reduced to showing that
  \begin{equation*}
    \big|S \setminus (X + \equivc{0}_*)\big| \ge \big|\big(X + T'\big) \setminus \big(X + \equivc{0}\big)\big|,
  \end{equation*}
  where, recall,
  \begin{equation*}
    S \setminus (X + \equivc{0}_*) = \big(X + T'\big) \setminus \big((\equivc{0} + T'') \cup (X + \equivc{0}_*)\big).
  \end{equation*}
  It is enough, therefore, to prove that
  \begin{equation*}
    \equivc{0} + T'' \subseteq \equivc{0} + X \quad \text{ and } \quad X + \equivc{0}_* \subseteq X + \equivc{0}.
  \end{equation*}
  Both inclusions are trivial, as $T'' \subseteq X$ by assumption, and $\equivc{0}_* \subseteq \equivc{0}$ by definition.
\end{proof}

We are now ready to put the pieces together and prove \Cref{stmt:freiman-lemma-via-few-translates}.

\begin{proof}[Proof of \Cref{stmt:freiman-lemma-via-few-translates} assuming \Cref{stmt:phase-2,stmt:phase-3}]
  By translating if necessary, we may assume that $0 \in X$; note that this does not change $\rank(X)$.
  As in the proof of the weaker version, \Cref{stmt:weak-freiman}, we start by defining $T_0 = \{0\}$: observe that $\rank(T_0) = 0$ and
  \begin{equation}\label{eq:first-step-of-greedy}
    |X + 0| = |X|.
  \end{equation}

  The next steps consist of taking $T_{i + 1} = T_i \cup \{x_{i + 1}\}$, where $x_{i + 1} \in X$ is defined to be the $x^*$ given by \Cref{stmt:greedy-phase} applied to $T_i$.
  We can do so as long as $|X \cap \Span(T_i)| < \gamma|X|$ and $i < r$, because $|T_i| = i + 1$ implies that $\rank(T_i) \le i$.
  We also stop this process for the first $t$ such that
  \begin{equation*}
    |X \cap \Span(T_t)| \ge \gamma |X|.
  \end{equation*}
  By \eqref{eq:first-step-of-greedy} and our choice of $x_{i + 1}$ via \Cref{stmt:greedy-phase}, we have, for all $i \le t$,
  \begin{equation}\label{eq:result-of-greedy}
    |X + T_i| \ge |X| + (1 - \gamma) \frac{i|X|}{2} \ge (1 - \gamma) \frac{(i + 2)|X|}{2},
  \end{equation}
  and $|T_i| \le i + 1$.

  Now, if $t \ge r$, then we can take $T = T_t$, and we have completed the proof by \eqref{eq:result-of-greedy}.
  Otherwise, we may assume that $t < r$ and define $W_1 = \Span(T_t).$
  We would like to use $W_1$ for the rest of the proof, but \Cref{stmt:phase-3} requires that all non-zero fibres have size at most $\eta |X|$.
  To continue, then, we need to define a subspace $W$ such that for every $z \in Z(W, X) \setminus \{0\}$ and given $\eta > 0$, $$\big| \equivc{z}_{W,X} \big| \le \eta |X|.$$
  We will do so by iteratively projecting these large fibres onto the 0 fibre until none remain.
  With foresight, we set $\eta = \gamma^2$.
  Formally, our process is:
  \begin{enumerate}
    \item Start with $\ell = 1$ and $W_1 = \Span(T_t)$.
    \item If there exists $z_\ell \in Z(W_\ell, X) \setminus \{0\}$ such that $\big| \equivc{z_\ell} \big| \ge \eta |X|$, then we let $$W_{\ell + 1} = \Span(W_\ell \cup \{z_\ell\}).$$
    \item If there is no such $z_\ell$, we stop with output $W_\ell$.
  \end{enumerate}
  A simple and important observation is that $\ell \le 1/\eta$, since $\eta < \gamma$ and $$ |X| \ge |W_\ell \cap X| \ge \ell \eta |X|. $$
  Noting that $\dim(W_1) \le t$, since $|T_t| \le t + 1$ and $0 \in T_t$, it follows that
  \begin{equation}\label{eq:rank-of-W}
    r_W := \dim(W_\ell) \leq \dim(W_1) + 1/\eta \le t + 1/\eta.
  \end{equation}

  Take $W$ to be the output of the above process.
  Moreover, let $Z = Z(W, X)$ and divide the rest of the proof into two cases depending on $|Z|$.
  The first case is if $|Z| \geq (r + 1)/\gamma$.
  Here, we claim that applying \Cref{stmt:many-fibres-are-easier} completes the proof.
  The set $T \subseteq X$ provided by this application satisfies
  \begin{equation*}
    |X + T| \ge (r + 1)|X| \ge (1 - 5\gamma)\frac{(r + 1)|X|}{2},
  \end{equation*}
  and
  \begin{equation*}
    |T| = \frac{r + 1}{\gamma} \le \frac{4(r + 1)}{\gamma},
  \end{equation*}
  as required.

  It remains to deal with the other case, and we may thus assume that $|Z| < (r + 1)/\gamma$.
  In this scenario, our set $T$ will be the union of three sets: the second to last set $T_{t - 1}$, the set $T'$ given by \Cref{stmt:phase-3}, and $T''$, the output of \Cref{stmt:phase-2} for a suitable choice of $u^* \in \R^d$.
  Note that it consists of few translates:
  \begin{equation*}
    |T| \le |T_{t - 1}| + |T'| + |T''| \le t + |Z| + 2|Z| \le \frac{4(r + 1)}{\gamma},
  \end{equation*}
  where the last inequality follows from our assumptions that $t < r$ and $|Z| < (r + 1)/\gamma$.
  It remains to show that $X + T$ has the appropriate size.

  First, we separate the contributions of $T_{t - 1}$ and $\hat{T} = T' \cup T''$ to the sumset
  \begin{equation}\label{eq:first-splitting-sumset}
    |X + T| \geq |X + T_{t - 1}| + \big| \big(X + \hat{T}\big) \setminus \big(X + T_{t - 1}\big)\big|,
  \end{equation}
  and we would like to apply \Cref{stmt:gluing-phase-2-and-phase-3} to bound the second term in the right-hand side.
  To do this, we need to show that $T_{t - 1} \subseteq \equivc{0}_*$ for some $u^* \in \R^d$, as that would imply
  \begin{equation}\label{eq:inclusion-of-T_t-1-in-null-part-of-zero-fibre}
    \big(X + \hat{T}\big) \setminus \big(X + \equivc{0}_*\big) \subseteq \big(X + \hat{T}\big) \setminus \big(X + T_{t - 1}\big).
  \end{equation}

  Besides $T_{t - 1} \subseteq \equivc{0}_*$, our choice of $u^*$ will also need to satisfy
  \begin{equation}\label{eq:small-null-part-of-zero-fibre}
    \big| \equivc{0}_* \big| < \gamma |X|.
  \end{equation}
  To achieve that, we define the subspace $W_0 = \Span(T_{t - 1})$ and claim that we can pick $u^* \in \R^d$ such that
  \begin{equation}\label{eq:choice-of-x*}
    \equivc{0}_* = \{x \in \equivc{0} : \inner{x, u^*} = 0\} = X \cap W_0.
  \end{equation}
  As we have stopped the greedy process at $t$, we must have $|X \cap W_0| < \gamma |X|$, so this choice also satisfies \eqref{eq:small-null-part-of-zero-fibre}.
  We can choose $u^* \in W_0^\perp$ satisfying \eqref{eq:choice-of-x*} because $X$ is finite and
  \begin{equation*}
    \dim(W_0) \le t < r \le d,
  \end{equation*}
  since $|T_{t - 1}| \le t$ and $0 \in T_{t - 1}$.
  Notice that this choice of $u^*$ mimics that of $u$ in the proof of \Cref{stmt:greedy-phase}.

  With this choice of $u^*$, we have $T_{t - 1} \subseteq \equivc{0}_*$, as required, and so combining \eqref{eq:inclusion-of-T_t-1-in-null-part-of-zero-fibre} with \Cref{stmt:gluing-phase-2-and-phase-3} yields, in \eqref{eq:first-splitting-sumset},
  \begin{equation}\label{eq:splitting-sumset}
    |X + T| \ge |X + T_{t - 1}| + \big|\big(X + T'\big) \setminus \big(X + \equivc{0}\big)\big| + \big|\big(\equivc{0} + T''\big) \setminus \big(X + \equivc{0}_*\big)\big|
  \end{equation}
  for $T = T_{t - 1} \cup T' \cup T''$, and $T'$ and $T''$ as given by \Cref{stmt:phase-3} and \Cref{stmt:phase-2}, respectively.
  As $T'$ originates from \Cref{stmt:phase-3}, we have
  \begin{equation}\label{eq:bound-from-phase-3}
    \big|\big(X + T'\big) \setminus \big(X + \equivc{0}\big)\big| \ge \frac{(r - r_W)}{2}\big(|X| - \big|\equivc{0}\big|\big) - \eta|Z||X|,
  \end{equation}
  where, recall, $r_W \leq t + 1/\eta$, by \eqref{eq:rank-of-W}.
  Moreover, by \Cref{stmt:phase-2}, we have
  \begin{equation*}
    \big| \big(\equivc{0} + T'' \big) \setminus \big(X + \equivc{0}_*\big)\big| \ge |Z| \big( \big|\equivc{0}\big| - \big|\equivc{0}_*\big|\big),
  \end{equation*}
  which, by \eqref{eq:small-null-part-of-zero-fibre}, implies that
  \begin{equation}\label{eq:bound-from-phase-2}
    \big| \big(\equivc{0} + T'' \big) \setminus \big(X + \equivc{0}_*\big)\big| \ge |Z| \big( \big|\equivc{0}\big| - \gamma|X|\big).
  \end{equation}

  Recall that, by \eqref{eq:result-of-greedy}, we have
  \begin{equation*}
    |X + T_{t - 1}| \ge (1 - \gamma) \frac{(t + 1)|X|}{2},
  \end{equation*}
  which, replaced alongside \eqref{eq:bound-from-phase-3} and \eqref{eq:bound-from-phase-2} in \eqref{eq:splitting-sumset}, yields
  \begin{align}
    |X + T| & \ge (1 - \gamma) \frac{(t + 1)}{2}|X| \nonumber \\
                & + \frac{(r - r_W)}{2}\Big(|X| - \big|\equivc{0}\big|\Big) \label{eq:negative-part-of-weighted-freiman-term-in-opaque-but-good-enough-bound} \\
                & - \eta|Z||X| \nonumber \\
                & + |Z| \Big( \big|\equivc{0}\big| - \gamma|X|\Big). \label{eq:phase-2-term-in-opaque-but-good-enough-bound}
  \end{align}
  The rest of the proof is dedicated to showing that this expression is at least the bound we need.

  To this end, observe that
  \begin{equation}\label{eq:lower-bound-on-size-of-Z}
    |Z| \ge \rank(Z) \ge \rank(X) - \rank(W) \ge r - r_W.
  \end{equation}
  Another important observation is that
  \begin{equation}\label{eq:lower-bound-on-size-of-null-part-of-zero-fibre}
    \big|\equivc{0}\big| \ge |X \cap W_1| \ge \gamma |X|,
  \end{equation}
  since $X \cap W_1 \subseteq \equivc{0}$.

  It follows from \eqref{eq:lower-bound-on-size-of-Z} and \eqref{eq:lower-bound-on-size-of-null-part-of-zero-fibre} that the sum of \eqref{eq:negative-part-of-weighted-freiman-term-in-opaque-but-good-enough-bound} and \eqref{eq:phase-2-term-in-opaque-but-good-enough-bound} is at least
  \begin{equation}\label{eq:lower-bound-on-middle-term}
    \frac{r - r_W}{2}\Big(|X| - \big|\equivc{0}\big|\Big) + |Z| \Big( \big|\equivc{0}\big| - \gamma|X|\Big) \ge (1 - \gamma) \frac{(r - r_W)}{2}|X|.
  \end{equation}
  Replacing \eqref{eq:lower-bound-on-middle-term} in our lower bound for the size of $X + T$, we obtain
  \begin{equation*}
    |X + T| \ge (1 - \gamma) \frac{(t + 1)}{2}|X| + (1 - \gamma) \frac{(r - r_W)}{2}|X| - \eta |Z| |X|
  \end{equation*}
  and using that $r_W \le t + 1/\eta$ by \eqref{eq:rank-of-W}, yields
  \begin{equation}\label{eq:final-bound}
    |X + T| \ge (1 - \gamma) \frac{(r + 1)}{2}|X| - \frac{(1 - \gamma)}{2\eta}|X| - \eta|Z||X|.
  \end{equation}

  It is now enough to determine that
  \begin{equation}\label{eq:final-bound-inequality}
    (1 -\gamma) \frac{1}{2\eta} + \eta |Z| \le 2 \gamma (r + 1)
  \end{equation}
  as replacing it in \eqref{eq:final-bound} gives the desired bound:
  \begin{equation*}
    |X + T| \ge (1 - 5 \gamma) \frac{r + 1}{2}|X|.
  \end{equation*}
  To obtain \eqref{eq:final-bound-inequality}, observe that
  \begin{equation}\label{eq:bound-on-eta-Z}
    \eta |Z| \le \gamma (r + 1)
  \end{equation}
  follows from our choice of $\eta = \gamma^2$ and our assumption that, in the current case, ${|Z| \leq (r + 1)/\gamma}$.
  Moreover, the following holds
  \begin{equation}\label{eq:bound-on-1-over-2-eta}
    \frac{1}{2 \eta} < \gamma (r + 1)
  \end{equation}
  since $\gamma^3 r \ge 1$ by assumption.
  The proof is complete by substituting \eqref{eq:bound-on-eta-Z} and \eqref{eq:bound-on-1-over-2-eta} in \eqref{eq:final-bound-inequality}.
\end{proof}

\section{Offsetting the loss of the zero fibre: proof of \texorpdfstring{\Cref{stmt:phase-2}}{Proposition~\protect\ref{stmt:phase-2}}}\label{sec:offsetting-the-loss-of-the-zero-fibre}

As we remarked in the previous section, we offset the loss of the zero fibre by adding two carefully selected points from each non-empty fibre to the final set of translates.
These points are the maximiser and minimiser of the linear function $x \mapsto \inner{x, u^*}$ in each fibre.
Let $Z = \{z_1, \ldots, z_m\}$.
For each $z_i \in Z$, we choose $y_i^+, y_i^- \in \equivc{z_i}$ to be, respectively, a maximiser and a minimiser of $y \mapsto \inner{y, u^*}$ in $\equivc{z_i}$, and set $Y_i := \{y_i^+, y_i^-\}$.

Recall that $Z = Z(X, W)$ is now defined as the projection of $X$ onto $W^\perp$, where $X$ and $W$ come from the statement of the \namecref{stmt:phase-2}.
Similarly to the proof of \Cref{stmt:greedy-phase}, we will define ``positive'' and ``negative'' parts of $\equivc{0}$ as
\begin{align*}
  \equivc{0}_+ = \big\{x \in \equivc{0} : \inner{x, u^*} > 0\big\} \quad \text{and} \quad \equivc{0}_- = \big\{x \in \equivc{0} : \inner{x, u^*} < 0\big\},
\end{align*}
which complement the ``null part'' $\equivc{0}_* = \{x \in \equivc{0} : \inner{x, u^*} = 0\}$ defined in the statement and complete a partition of the zero fibre.

Each pair of minimiser and maximiser we put in the set of translates will add to the sumset a translated copy of the sets $\equivc{0}_+$ and $\equivc{0}_-$.
As they form a partition of $\equivc{0} \setminus \equivc{0}_*$, this will correspond to adding $\big|\equivc{0}\big| - \big|\equivc{0}_*\big|$ elements to the sumset for each of the $m = |Z|$ non-empty fibres.
Showing this only requires the following simple geometric observations.
The first says that the translates $\equivc{0} + Y_i$ and $\equivc{0} + Y_j$ are disjoint if $Y_i$ and $Y_j$ lie in distinct fibres.
\begin{obs}\label{obs:disjointness}
  For all $i, j \in \{1, \ldots, m\}$, if $i \neq j$, then
  \begin{equation*}
    \big(\equivc{0} + Y_i\big) \cap \big(\equivc{0} + Y_j\big) = \emptyset.
  \end{equation*}
\end{obs}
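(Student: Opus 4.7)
The plan is to exploit the fact that the observation is fundamentally about fibres behaving well under translation: any translate of the zero fibre by a point in a given fibre ends up entirely inside that fibre, so translates by points in different fibres cannot collide.

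More precisely, I would first recall that $\Pi_{W^\perp}$ is a linear map, so for any $a \in \equivc{0}$ and $y \in Y_i \subseteq \equivc{z_i}$, we have
\[
\Pi_{W^\perp}(a + y) = \Pi_{W^\perp}(a) + \Pi_{W^\perp}(y) = 0 + z_i = z_i.
\]
This shows that $\equivc{0} + Y_i \subseteq [z_i]_W$, and analogously $\equivc{0} + Y_j \subseteq [z_j]_W$.

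To finish, I would observe that since $z_1,\dots,z_m$ are, by definition, distinct elements of $Z$, we have $z_i \neq z_j$ whenever $i \neq j$; and distinct equivalence classes are disjoint by definition, so $[z_i]_W \cap [z_j]_W = \emptyset$. Combining the two containments with this disjointness immediately gives
\[
(\equivc{0} + Y_i) \cap (\equivc{0} + Y_j) \subseteq [z_i]_W \cap [z_j]_W = \emptyset,
\]
as required. There is no real obstacle here — the statement is essentially a bookkeeping fact about projections — and the whole argument is about three lines once the linearity of $\Pi_{W^\perp}$ is invoked. Nothing about the specific choice of $Y_i$ as the maximiser/minimiser pair is used; only that $Y_i \subseteq \equivc{z_i}$.
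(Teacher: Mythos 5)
Your proof is correct and takes essentially the same approach as the paper: both reduce to the containment $\equivc{0} + Y_i \subseteq [z_i]$ (the paper simply references equation \eqref{eq:y_i-y_j-disjoint}, which records the same linearity argument you spell out) and then invoke the disjointness of distinct fibres.
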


\begin{proof}
  Note that $\equivc{0} + Y_i \subseteq [z_i]$, and recall from \eqref{eq:y_i-y_j-disjoint} that $[z_i] \cap [z_j] = \emptyset$.
\end{proof}

The second \namecref{obs:pos-and-neg-disjointness} says that the positive part of $\equivc{0}$ translated by the fibre maximiser $y_i^+$ is disjoint from its negative part translated by the corresponding fibre minimiser.
Its proof is essentially contained in the proof of \Cref{stmt:greedy-phase}.

\begin{obs}\label{obs:pos-and-neg-disjointness}
  For all $i \in \{1, \ldots, m\}$, we have
  \begin{equation}\label{eq:pos-and-neg-disjointness}
    \big(y_i^+ + \equivc{0}_+\big) \cap \big(y_i^- + \equivc{0}_-\big) = \emptyset.
  \end{equation}
\end{obs}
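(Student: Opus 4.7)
The plan is to mimic the contradiction argument that appeared in the proof of \Cref{stmt:greedy-phase}, using the linear functional $\phi : \R^d \to \R$ defined by $\phi(x) = \inner{x, u^*}$. The relevant observation is that, by construction, $y_i^+$ is a $\phi$-maximiser on $\equivc{z_i}$ and $y_i^-$ is a $\phi$-minimiser on $\equivc{z_i}$, so $\phi(y_i^+) \ge \phi(y_i^-)$. Meanwhile, the definitions of $\equivc{0}_+$ and $\equivc{0}_-$ are designed exactly so that $\phi$ is strictly positive on $\equivc{0}_+$ and strictly negative on $\equivc{0}_-$.

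I would then suppose for contradiction that the intersection in \eqref{eq:pos-and-neg-disjointness} is non-empty, so that there exist $a \in \equivc{0}_+$ and $b \in \equivc{0}_-$ with $y_i^+ + a = y_i^- + b$. Applying $\phi$ to both sides and rearranging gives
\begin{equation*}
  \phi(y_i^+) - \phi(y_i^-) = \phi(b) - \phi(a).
\end{equation*}
The left-hand side is non-negative by the maximiser/minimiser property of $y_i^\pm$, while the right-hand side is strictly negative since $\phi(a) > 0$ and $\phi(b) < 0$. This contradiction forces the two sets to be disjoint, yielding the claim.

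This is structurally almost identical to the hyperplane separation argument used inside \Cref{stmt:greedy-phase}, so I do not expect any real obstacle. The only thing to be careful about is that the maximisation and minimisation defining $y_i^+$ and $y_i^-$ are taken inside the same fibre $\equivc{z_i}$ (so that $\phi(y_i^+) \ge \phi(y_i^-)$ makes sense without any additional hypotheses), and that the definitions of $\equivc{0}_\pm$ and of $y_i^\pm$ use the same vector $u^*$; both of these conditions are built into the statement and the surrounding setup.
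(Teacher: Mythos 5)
Your proof is correct and is essentially the same argument as the paper's: both use the linear functional $x \mapsto \inner{x, u^*}$, the maximiser/minimiser property of $y_i^\pm$ within the fibre $\equivc{z_i}$, and the strict sign conditions on $\equivc{0}_\pm$. The paper phrases it as a direct chain of inequalities showing $\inner{y_i^+ + a, u^*} > \inner{y_i^- + b, u^*}$ for all $a \in \equivc{0}_+$, $b \in \equivc{0}_-$, whereas you rearrange and argue by contradiction, but the content is identical.
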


\begin{proof}
  Note that for any $a \in \equivc{0}_+, b \in \equivc{0}_-$, we have
  \begin{align*}
    \inner{y_i^+ + a, u^*} > \inner{y_i^+, u^*} \ge \inner{y_i^-, u^*} > \inner{y_i^- + b, u^*}.
  \end{align*}
  Hence, $y_i^+ + a \neq y_i^- + b$ for every $a \in \equivc{0}_+$ and $b \in \equivc{0}_-$, which implies \eqref{eq:pos-and-neg-disjointness}.
\end{proof}

The final \namecref{obs:empty-intersections} says that the original set translated by the ``null part'' of $\equivc{0}$ does not intersect the positive part of $\equivc{0}$ translated by the fibre maximiser $y_i^+$, and that the analogous statement holds for the negative part and the fibre minimiser $y_i^-$.
\begin{obs}\label{obs:empty-intersections}
  For all $i \in \{1, \ldots, m\}$, we have
  \begin{equation*}
    \big(X + \equivc{0}_*\big) \cap \big(y_i^+ + \equivc{0}_+\big) = \big(X + \equivc{0}_*\big) \cap \big(y_i^- + \equivc{0}_-\big) = \emptyset.
  \end{equation*}
\end{obs}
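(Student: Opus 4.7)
The proof will proceed by contradiction, exploiting the two geometric constraints that the setup forces: equality under projection onto $W^\perp$, and equality under the functional $\inner{\cdot, u^*}$. Suppose some point $p$ lies in $(X + \equivc{0}_*) \cap (y_i^+ + \equivc{0}_+)$, so we can write $p = a + b = y_i^+ + c$ with $a \in X$, $b \in \equivc{0}_*$ and $c \in \equivc{0}_+$.

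First I would apply $\Pi_{W^\perp}$ to both sides. Since $b, c \in \equivc{0} \subseteq W$, their projections onto $W^\perp$ vanish, so $\Pi_{W^\perp}(a) = \Pi_{W^\perp}(y_i^+) = z_i$. Combined with $a \in X$, this gives $a \in \equivc{z_i}$, which places $a$ in the same fibre where $y_i^+$ is the maximiser of $\inner{\cdot, u^*}$.

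Next I would take the inner product of $a + b = y_i^+ + c$ with $u^*$. On the left, $\inner{b, u^*} = 0$ by definition of $\equivc{0}_*$, so the left side equals $\inner{a, u^*}$, which is at most $\inner{y_i^+, u^*}$ by the maximality of $y_i^+$ within $\equivc{z_i}$. On the right, $\inner{c, u^*} > 0$ since $c \in \equivc{0}_+$, so the right side is strictly greater than $\inner{y_i^+, u^*}$. This is the desired contradiction, proving that $(X + \equivc{0}_*) \cap (y_i^+ + \equivc{0}_+) = \emptyset$.

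The case $(X + \equivc{0}_*) \cap (y_i^- + \equivc{0}_-) = \emptyset$ is handled identically, swapping the roles of maximiser and minimiser and reversing the sign of the strict inequality: one uses $\inner{c, u^*} < 0$ for $c \in \equivc{0}_-$ and the minimality of $y_i^-$ on $\equivc{z_i}$. There is no real obstacle here; the only thing to be careful about is the bookkeeping that $a$ is forced into the correct fibre $\equivc{z_i}$ by the projection step, as this is what lets the optimality property of $y_i^{\pm}$ apply to $a$.
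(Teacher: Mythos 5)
Your proof is correct and follows essentially the same approach as the paper: the paper first disposes of the "distinct fibres" case via the fibre-disjointness observation and then runs the inner-product comparison in the "same fibre" case, while you merge these two steps by using the projection onto $W^\perp$ to force $a$ into $\equivc{z_i}$ and then apply the identical inequality chain. The organization differs slightly but the key observations (projection kills $\equivc{0}$, maximality of $y_i^+$ on the fibre, strict positivity of $\inner{c, u^*}$) are the same in both.
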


\begin{proof}
  Recall that $X = \bigcup_{j = 1}^m \equivc{z_j}$ is a partition.
  Whenever $\equivc{z_j}$ and $y_i^+$ are in distinct fibres, we have that $\equivc{z_j} + \equivc{0}_*$ and $y_i^+ + \equivc{0}_+$ are disjoint by \eqref{eq:y_i-y_j-disjoint},
  \begin{equation*}
    \equivc{z_j} + \equivc{0}_* \subseteq [z_j] \quad \text{and} \quad y_i^+ + \equivc{0}_+ \subseteq [z_i].
  \end{equation*}

  We now consider the case when they are in the same fibre.
  Take $a \in \equivc{0}_+$, $c \in \equivc{0}_*$ and $y \in \equivc{z_i}$.
  Note that $\inner{a, u^*} > 0$ since $a \in \equivc{0}_+$, and $\inner{c, u^*} = 0$ as $c \in \equivc{0}_*$.
  Then $c + y \neq a + y_i^+$, because
  \begin{align*}
    \inner{a + y_i^+, u^*} > \inner{c + y_i^+, u^*} \ge \inner{c + y, u^*},
  \end{align*}
  and this completes the proof in the $y_i^+$ case.
  The proof in the $y_i^-$ case is analogous.
\end{proof}

We are now ready to prove \Cref{stmt:phase-2}.

\begin{proof}[Proof of \Cref{stmt:phase-2}]
  Recall that $m = |Z|$, where $Z = Z(X, W)$, and $Y_i = \{y_i^+, y_i^-\}$ consists of a maximiser and minimiser of $y \mapsto \inner{y, u^*}$ in $\equivc{z_i}$ for each $z_i \in Z$.
  $Y_i$ is well-defined because $Z$ is the set of non-empty fibres, although it may be a singleton for example if $\big|\equivc{z_i}\big| = 1$.
  We define $T''$ to be the following set with (trivially) at most $2m$ elements
  \begin{equation*}
    T'' = \bigcup_{i = 1}^m Y_i.
  \end{equation*}

  To avoid cumbersome notation, we will first prove the weaker inequality
  \begin{equation}\label{eq:weaker-phase-2}
    \big|\equivc{0} + T''\big| \ge \sum_{i = 1}^m \big|y_i^+ + \equivc{0}_+ \big| + \big| y_i^- + \equivc{0}_- \big|,
  \end{equation}
  and then show that the same steps can be applied removing the set $X + \equivc{0}_*$ in the left-hand side to obtain the desired bound.

  As $T''$ is the union of the $Y_i$, we use \Cref{obs:disjointness} to obtain
  \begin{equation}\label{eq:T-sumset-as-sum-of-Y_i}
    \big| \equivc{0} + T'' \big| = \sum_{i = 1}^m \big| \equivc{0} + Y_i \big|.
  \end{equation}

  Now, because $\equivc{0}_+$, $\equivc{0}_*$ and $\equivc{0}_-$ partition $\equivc{0}$, we may decompose, for each $Y_i$, the term in \eqref{eq:T-sumset-as-sum-of-Y_i} as
  \begin{equation}\label{eq:Y_i-decomposition}
    \big| \equivc{0} + Y_i \big| = \big| \big( \equivc{0}_+ + Y_i \big) \cup \big( \equivc{0}_* + Y_i \big ) \cup \big( \equivc{0}_- + Y_i \big) \big|,
  \end{equation}
  which, ignoring the term corresponding to $\equivc{0}_*$ and the ``mixed-sign'' terms, $\equivc{0}_- + y_+$ and $\equivc{0}_+ + y_-$, yields
  \begin{equation}\label{eq:Y_i-lowerbound-from-discarding-sets}
    \big| \equivc{0} + Y_i \big| \ge \big| \big( \equivc{0}_+ + y_i^+ \big) \cup \big( \equivc{0}_- + y_i^- \big) \big|.
  \end{equation}
  We can now apply \Cref{obs:pos-and-neg-disjointness}, which implies that the right-hand side of \eqref{eq:Y_i-lowerbound-from-discarding-sets} is at least
  \begin{equation}\label{eq:pos-and-neg-decomposition}
    \big| \big( \equivc{0}_+ + y_i^+ \big) \cup \big( \equivc{0}_- + y_i^- \big) \big| = \big| \equivc{0}_+ + y_i^+ \big| + \big| \equivc{0}_- + y_i^- \big|,
  \end{equation}
  hence establishing \eqref{eq:weaker-phase-2} via \eqref{eq:T-sumset-as-sum-of-Y_i}.

  To obtain the desired bound, we now show that the same steps can be applied removing the set $X + \equivc{0}_*$ in the left-hand side of \eqref{eq:weaker-phase-2}.
  First, \eqref{eq:T-sumset-as-sum-of-Y_i} holds if we remove $X + \equivc{0}_*$ from the set in the left-hand side and those in the sum in the right-hand side because removing a set cannot create intersections in disjoint sets.
  We then repeat the steps in \eqref{eq:Y_i-decomposition} and \eqref{eq:Y_i-lowerbound-from-discarding-sets} -- they are also possible regardless of the set removal -- and, upon reaching \eqref{eq:pos-and-neg-decomposition}, we again use that disjointness is preserved under set removal.
  This gives us
  \begin{equation*}
    \Big| \big(\equivc{0} + T''\big) \setminus \big( X + \equivc{0}_* \big) \Big| \ge \sum_{i = 1}^m \Big| \big( y_i^+ + \equivc{0}_+ \big) \setminus \big( X + \equivc{0}_* \big) \Big| + \Big| \big( y_i^- + \equivc{0}_- \big) \setminus \big( X + \equivc{0}_* \big) \Big|.
  \end{equation*}

  We are now in a position to use \Cref{obs:empty-intersections} to deduce that the set we removed is disjoint from the ones in the right-hand side above, and recover the lower bound prior to the removal
  \begin{align*}
    \Big| \big(\equivc{0} + T''\big) \setminus \big( X + \equivc{0}_* \big) \Big|
    & \ge \sum_{i = 1}^m \big|y_i^+ + \equivc{0}_+ \big| + \big| y_i^- + \equivc{0}_- \big| \\
    &= m \Big( \big| \equivc{0}_+ \big| + \big| \equivc{0}_- \big| \Big).
  \end{align*}

  Again using that $\equivc{0}_- \cup \equivc{0}_* \cup \equivc{0}_+$ is a partition of $\equivc{0}$, we get
  \begin{equation*}
    m \Big( \big|\equivc{0}_+\big| + \big|\equivc{0}_-\big|\Big) = m \Big( \big|\equivc{0}\big| - \big|\equivc{0}_*\big|\Big),
  \end{equation*}
  as required to complete the proof.
\end{proof}

\section{Weighted \Freiman's lemma: proof of \texorpdfstring{\Cref{stmt:phase-3}}{Proposition~\protect\ref{stmt:phase-3}}}\label{sec:weighted-freiman}

The final piece we need to prove \Cref{stmt:freiman-lemma-via-few-translates} is detailing how to choose $T' \subseteq X$ such that $|T'| \le |Z|$ and \eqref{eq:phase-3} holds, and how to prove a variant of \Freiman's lemma where the size of the original, unprojected set $X$ appears in the lower bound, instead of simply $|Z|$.

We will first prove a weaker, insufficient statement to make the reader comfortable with the notation and ideas in the proof of \Cref{stmt:phase-3}.
Our goal here is to show that we can choose $T' \subseteq X$ such that $|T'| \le |Z|$ and\footnote{Notice that $(w - Z) \cap Z = \{z \in Z : \exists z' \in Z \, \text{ such that } z + z' = w\}$.}
\begin{equation}\label{eq:weighted-freiman-warm-up}
  |X + T'| \ge \sum_{w \in Z + Z} \max_{z \in (w - Z) \cap Z} \big| \equivc{z} \big|.
\end{equation}

Before proceeding, it will be useful to let $Z = \{z_1, \ldots, z_m\}$.
Here (and in the proof itself), we will take
\begin{equation*}
  T' = \{y_1, \ldots, y_m\},
\end{equation*}
where, for each $i \in \{1, \ldots, m\}$, $y_i \in \equivc{z_i}$ is arbitrary.
It is immediate that the size of $T'$ is appropriate, that is,
\begin{equation*}
  |T'| \le |Z|,
\end{equation*}
so we must show that it also satisfies \eqref{eq:phase-3}.

Having defined $T'$, we start this warm-up by partitioning $X + T'$ into fibres of $Z + Z$:
\begin{equation}\label{eq:simple-obs}
  X + T' = \bigcup_{w \in Z + Z} [w] \cap (X + T').
\end{equation}
As \eqref{eq:simple-obs} defines a partition, we also have
\begin{equation*}
  \left| \bigcup_{w \in Z + Z} \Big( [w] \cap (X + T') \Big) \right| = \sum_{w \in Z + Z} \left| \Big( [w] \cap (X + T') \Big) \right|.
\end{equation*}
Note also that if $w = z_i + z_j \in Z + Z$, then
\begin{equation}\label{eq:fibre-is-contained}
  \equivc{z_i} + y_j \subseteq \big( [w] \cap (X + T') \big)
\end{equation}
since $\equivc{z_i} \subseteq X$ and $y_j \in \equivc{z_j} \cap T'$.

Our approach will be to count only the elements in a single (translated) fibre $\equivc{z_i} + y_j$ and ignore the rest of $[z_i + z_j] \cap (X + T')$ -- by \eqref{eq:simple-obs} and \eqref{eq:fibre-is-contained}, this is a valid lower bound for $|X + T'|$.
As the union in \eqref{eq:simple-obs} ranges over all $w \in Z + Z$, we can pick any possible representation $z_i + z_j$ for $w$.
Our choice will be the one for which $\big|\equivc{z_i}\big|$ is as large as possible, resulting in
\begin{equation*}
  \sum_{w \in Z + Z} \left| \Big( [w] \cap (X + T') \Big) \right| \ge \sum_{w \in Z + Z} \max_{z \in (w - Z) \cap Z} \big| \equivc{z} \big|,
\end{equation*}
since $\big| \equivc{z} + y \big| = \big| \equivc{z} \big|$.
This completes the proof of \eqref{eq:weighted-freiman-warm-up}.

In the proof of \Cref{stmt:phase-3}, the statement that is analogous to \eqref{eq:weighted-freiman-warm-up} is
\begin{equation}\label{eq:phase-3-first-step}
  \Big|\big(X + T'\big) \setminus \big(X + \equivc{0}\big) \Big| \ge \sum_{w \in (Z^* + Z^*) \setminus Z} \max_{z \in (w - Z^*) \cap Z^*} \big| \equivc{z} \big|,
\end{equation}
where $Z^* = Z \setminus \{0\}$ -- in words, we ``remove the $0$ from $Z$'' before doing the sumset.
The proof of \eqref{eq:phase-3-first-step} is essentially the same as the warm-up above, but that is still not enough.
We must have a good lower bound for its right-hand side in order to complete a proof of \Cref{stmt:phase-3}.

\Cref{stmt:weighted-freiman}, below, is the lower bound we need for the right-hand side of \eqref{eq:phase-3-first-step}: when applying it to prove \Cref{stmt:phase-3}, we will set $U = Z^*$ and $f(z) = \big| \equivc{z} \big|$.
The proof of the \namecref{stmt:weighted-freiman} is similar to traditional proofs of \Freiman's lemma, but we must take into account the weight $f(u)$ of each $u \in U$ when selecting vertices of $\conv(U)$, the convex hull of $U$, instead of choosing an arbitrary vertex\footnote{Hence the name ``weighted \Freiman's lemma''.}.
\begin{lem}\label{stmt:weighted-freiman}
  Let $d \in \N$.
  For every finite $U \subseteq \R^d$ and $f: U \to \R_+$, we have
  \begin{equation}\label{eq:key-weighted-freiman}
    \sum_{w \in U + U} \max_{u \in (w - U) \cap U} f(u) \ge \frac{\rank(U) + 1}{2} \sum_{u \in U} f(u).
  \end{equation}
\end{lem}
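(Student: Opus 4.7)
My plan is to reduce \Cref{stmt:weighted-freiman} to an unweighted sumset inequality by layer-cake integration, and then prove the unweighted claim by an induction on $\rank(U)$ that follows the classical proof of \Freiman's lemma.

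For the reduction, let $U_t := \{u \in U : f(u) \ge t\}$. Using $f(u) = \int_0^\infty \mathbbm{1}[u \in U_t]\,dt$ together with $\max_{u \in S} f(u) = \int_0^\infty \mathbbm{1}[S \cap U_t \ne \emptyset]\,dt$, and the equivalence between the conditions ``$(w - U) \cap U_t \ne \emptyset$'' and ``$w \in U + U_t$'', inequality \eqref{eq:key-weighted-freiman} becomes
\begin{equation*}
\int_0^\infty |U + U_t|\,dt \ge \frac{\rank(U) + 1}{2}\int_0^\infty |U_t|\,dt.
\end{equation*}
It therefore suffices to prove the uniform unweighted claim that, for every $A \subseteq U$ with $r := \rank(U)$, we have $|A + U| \ge \frac{r+1}{2}|A|$.

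To establish this, I would induct on $r$. The cases $r \in \{0, 1\}$ follow from the one-dimensional bound $|A + U| \ge |A| + |U| - 1$. For the inductive step, choose a hyperplane $H$ supporting a facet of $\conv(U)$, so that $U_0 := U \cap H$ has rank $r - 1$ and $U_1 := U \setminus H$ lies strictly on one side. After an affine change of coordinates which positions the $n$-coordinates of $U_1$ in an interval $[\alpha, \beta]$ with $2\alpha > \beta$ (where $n$ is the normal to $H$), the sumset $A + U$ splits disjointly into the slab at $0$ (containing $A_0 + U_0$), the middle slabs at the $n$-coordinates of $U_1$ (containing $A_0 + U_1$ together with $U_0 + A_1$), and the high slabs (containing $A_1 + U_1$), where $A_j := A \cap U_j$. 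I would bound the first piece by $\frac{r}{2}|A_0|$ via the inductive hypothesis applied to $U_0$ inside $H$; bound the middle slabs via the one-dimensional Cauchy--Davenport estimate $|U_0 + A_1| \ge |U_0| + |A_1| - 1$ (after projecting to a generic line); and bound the third piece by applying the inductive hypothesis to $A_1 + U_1$ inside the affine hull of $U_1$. A short calculation, using $|U_0| \ge r$, then closes the induction.

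The main obstacle is the algebraic bookkeeping in the inductive step: the three contributions must combine tightly, and the middle-slab bound becomes delicate when $U_1$ has few distinct $n$-coordinates (as happens for $U = \{0,1\}^r$ and a coordinate facet). In such configurations the naive ``one translate of $U_0$'' estimate is insufficient, so the one-dimensional sumset bound for $U_0 + A_1$ is essential; it is also crucial that $\rank(U_1)$ equals $r - 1$ in typical cases, which keeps the inductive estimate on $A_1 + U_1$ strong. A secondary difficulty is the edge case $\rank(U_1) < r - 1$, which occurs when $U_1$ is degenerate (e.g.\ a single point): the inductive hypothesis on $A_1 + U_1$ is then weaker, but it combines with the other two bounds to still give the required estimate, as one verifies by a short direct computation.
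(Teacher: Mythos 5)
Your layer-cake reduction is correct and is a genuinely nice observation that the paper does not make: writing $U_t = \{u : f(u) \ge t\}$ and noting that $w \in U + U_t$ iff $(w-U)\cap U \cap U_t \ne \emptyset$ does reduce \eqref{eq:key-weighted-freiman} to the unweighted claim that $|A + U| \ge \tfrac{r+1}{2}|A|$ for every $A \subseteq U$ with $\rank(U) = r$. This claim is true, and if proved it would recover the lemma. Unfortunately, your proposed proof of it does not work as stated, for two reasons.

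First, the decomposition into ``slab $0$ / middle slabs / high slabs'' requires the $n$-coordinates of $U_1$ to lie in an interval $[\alpha,\beta]$ with $2\alpha > \beta$, but this is not achievable: once $H$ is fixed at $n$-coordinate $0$, every affine map preserving that hyperplane acts on the $n$-coordinate as $x \mapsto ax$, so the ratio $\beta/\alpha$ is invariant. For a generic $U$ (say one whose non-facet $n$-coordinates are $\{1,2,3\}$) one has $\beta/\alpha \ge 2$, and then $A_0 + U_1$ and $A_1 + U_1$ overlap in $n$-coordinate, so the three contributions are not additive and the bookkeeping collapses. Second, the induction is declared to be on $r = \rank(U)$, but $\rank(U_1)$ need not drop below $r$: if $U$ is ``thick'' (e.g.\ has several non-collinear points strictly on the open side of $H$), then $\rank(U_1) = r$, and the inductive hypothesis simply does not apply to the high-slab term. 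This is not an ``edge case'' but the generic situation; the genuine edge case ($\rank(U_1)$ small) is the easy one. Both problems together mean the facet-cutting induction is the wrong tool here.

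The good news is that the unweighted claim admits a clean proof by vertex removal and induction on $|U|$ (not on rank), and after your reduction this is actually slightly simpler than the paper's argument: pick any vertex $v$ of $\conv(U)$ and set $U' = U\setminus\{v\}$, $A' = A\setminus\{v\}$. If $v \notin A$, either $\rank(U')=r$ and induction on $U'$ suffices, or $\rank(U')=r-1$ and $A+v$ is disjoint from $A+U'$ giving an extra $|A|$. If $v\in A$, use a supporting hyperplane through a facet of $\conv(U')$ (as in \Cref{obs:facet-intersection}) to find $\ge \rank(U')+1$ elements of $(v+U)\setminus(U'+U')$ when $\rank(U')=r$, and use $|U'|\ge\max\{|A'|,r\}$ when $\rank(U')=r-1$; in each case a short computation closes the induction. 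This mirrors the paper's proof in spirit, but your reduction removes the need to track weights and in particular removes the need for the paper's weight-aware choice of $v$ satisfying $f(v)\le\frac{1}{\rank(U)+1}\sum_u f(u)$ — any vertex works. I would encourage you to keep the layer-cake step and replace the facet-cutting induction with this vertex-removal argument.
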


We will also need a simple observation to repeat one of the steps in the proof of \Freiman's lemma and prove \Cref{stmt:weighted-freiman}.

\begin{obs}\label{obs:facet-intersection}
  Let $U \subseteq \R^d$ be a finite set.
  If $v \in U$ is a vertex of $\conv(U)$ and $U' = U \setminus \{v\}$, then there exists a hyperplane $\cH$ such that $v$ and $U' \setminus \cH$ are on different sides of $\cH$.
  Moreover, $|\cH \cap U'| \ge \rank(U')$.
\end{obs}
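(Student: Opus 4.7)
The plan is to prove the statement by a case analysis based on whether $v$ lies in $A := \operatorname{aff}(U')$. The key starting observation is that $v \notin \conv(U')$, since $v$ being a vertex of $\conv(U)$ means it is not a convex combination of the remaining points of $U$. Write $r = \rank(U') = \dim A$.

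In the easy case $v \notin A$, the affine subspace $A$ has dimension $r \le d - 1$, and so can be enlarged to a hyperplane $\cH$ of $\R^d$ that still avoids $v$ (there is room to do so, since $v \notin A$). Then $U' \subseteq A \subseteq \cH$, so $|\cH \cap U'| = |U'| \ge r + 1$ and $U' \setminus \cH = \emptyset$; the side condition is vacuously true.

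In the main case $v \in A$, I work inside $A$, where $\conv(U')$ is a full-dimensional polytope. The standard half-space description of a polytope, applied inside $A$, says that $\conv(U')$ equals the intersection of the closed half-spaces determined by its facets. Since $v \in A \setminus \conv(U')$, the point $v$ must be strictly outside at least one such half-space; this produces a facet $F$ of $\conv(U')$ and an $(r-1)$-dimensional affine subspace $\cH_A \subseteq A$ supporting $F$, with $v$ strictly on the side of $\cH_A$ opposite to $\conv(U') \setminus F$. The vertices of $F$ are points of $U'$ and affinely span the $(r-1)$-dimensional subspace $\cH_A$, so $|\cH_A \cap U'| \ge r$.

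To conclude, I extend $\cH_A$ to a hyperplane $\cH$ of $\R^d$ (no extension is needed when $r = d$), chosen so that $\cH \cap A = \cH_A$; such an extension exists because $\cH_A \subsetneq A$. Then $v \notin \cH$ (since $v \in A \setminus \cH_A$), and, because $U' \subseteq A$, one has $U' \setminus \cH = U' \setminus \cH_A$, which lies on the side of $\cH$ opposite to $v$. The only non-routine ingredient is producing the separating facet in the main case, which is immediate from the half-space representation of a convex polytope inside its affine hull; everything else is bookkeeping about extending $\cH_A$ from $A$ to $\R^d$.
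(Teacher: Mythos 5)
Your proof is correct, and it takes the same core approach as the paper's (the half-space representation of the polytope $\conv(U')$), but it is noticeably more careful about the degenerate geometry. The paper's one-line proof writes $\conv(U') = \bigcap_{\cH \in \mathbf{H}} \cH^-$ over the facet-supporting hyperplanes of $\conv(U')$ and picks the $\cH$ violated by $v$; as literally stated this presupposes that $\conv(U')$ is full-dimensional in $\R^d$. When $\rank(U') < d$, the intersection of the facet half-spaces in $\R^d$ is not $\conv(U')$ (it is unbounded in the directions perpendicular to $\operatorname{aff}(U')$), and one can place $v$ off the affine hull so that it is strictly inside every such half-space, making the paper's argument fail as written. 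Your version repairs exactly this: the case $v \notin \operatorname{aff}(U')$ is dispatched separately (with $U' \setminus \cH = \emptyset$ making the separation vacuous), and in the main case you apply the half-space representation inside $\operatorname{aff}(U')$, where $\conv(U')$ \emph{is} full-dimensional, and only afterwards extend $\cH_A$ to a hyperplane of $\R^d$. This is the rigorous reading of the paper's intended argument rather than a different one, and the facet-counting step $|\cH \cap U'| \ge \rank(U')$ is handled identically in both.
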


\begin{proof}
  Write $\conv(U') = \bigcap_{\cH \in \mathbf{H}} \cH^-$, where $\mathbf{H}$ is the collection of hyperplanes supporting the facets of $\conv(U')$ and $\cH^-$ denotes a closed half-space defined by $\cH$.
  Since $v \not \in \conv(U')$, there exists $\cH \in \mathbf{H}$ such that $v \not \in \cH^-$, so $v$ and $U' \setminus \cH$ are on different sides of $\cH$.
  The second part of the statement follows from $\cH$ intersecting a facet of $\conv(U')$ and the fact that every facet contains at least $\rank(U')$ vertices.
\end{proof}

Now, we can prove \Cref{stmt:weighted-freiman}.

\begin{proof}[Proof of \Cref{stmt:weighted-freiman}]
  We prove the \namecref{stmt:weighted-freiman} by induction on $|U|$.
  In the base case, we have an empty set, so the left-hand side of \eqref{eq:key-weighted-freiman} is equal to 0, as is the right-hand side.
  We can therefore assume that for every $U' \subsetneq U$, we have \eqref{eq:key-weighted-freiman}.

  For the induction step, we proceed similarly to the standard proof of \Freiman's lemma.
  The main difference is that instead of choosing an arbitrary element from $V$, the vertices of $\conv(U)$, we must choose a vertex considering the weight function $f$.
  To be precise, we choose a vertex $v \in V$ such that
  \begin{equation}\label{eq:weighted-freiman-criterion-for-choice-of-v}
    f(v) \le \frac{1}{s + 1} \sum_{u \in U} f(u),
  \end{equation}
  where $s = \rank(U)$, noting that such a choice exists by the pigeonhole principle,
  \begin{equation*}
    (s + 1) \min_{v \in V} f(v) \le |V| \min_{v \in V} f(v) \le \sum_{v \in V} f(v) \le \sum_{u \in U} f(u).
  \end{equation*}
  We fix one such $v \in V$, define $U' = U \setminus \{v\}$ and divide the remainder of the proof based on whether ${\rank(U') = s}$.

  Consider first the case $\rank(U') = s$.
  The induction hypothesis implies that
  \begin{equation}\label{eq:weighted-freiman-first-case-inductive-hypothesis-application}
    \sum_{w \in U' + U'} \max_{u \in (w - U) \cap U} f(u) \ge \sum_{w \in U' + U'} \max_{u \in (w - U') \cap U'} f(u) \ge \frac{s + 1}{2} \sum_{u' \in U'} f(u')
  \end{equation}
  since $U' \subseteq U$.
  Therefore, by \eqref{eq:weighted-freiman-first-case-inductive-hypothesis-application}, we can accomplish our goal by finding a set $S \subseteq (U + U) \setminus (U' + U')$ such that
  \begin{equation}\label{eq:weighted-freiman-requirement-for-set-S}
   \sum_{w \in S} \max_{u \in (w - U) \cap U} f(u) \ge \frac{s + 1}{2} f(v).
  \end{equation}

  In order to define our candidate set for $S$, let $\cH$ be the hyperplane given by \Cref{obs:facet-intersection} for $U$ and $v$, and take $\oN{v} = \big(\cH \cap \conv(U')\big) \cup \{v\}$.
  We claim that $\oN{v} + v$ is a suitable choice for $S$ because it is disjoint from $U' + U'$ and satisfies \eqref{eq:weighted-freiman-requirement-for-set-S}.

  To see that $\oN{v} + v$ and $U' + U'$ are disjoint, first notice that $2v \not \in U' + U'$ follows from $v$ being a vertex of $\conv(U)$.
  For the remaining elements of $\oN{v}$, \ie $v' \in \cH \cap \conv(U')$, $v' + v$ is not in $U' + U'$ because $(v' + v)/2$ is a midpoint of the segment connecting $v$ and $v'$, and this midpoint clearly lies outside $\conv(U')$ by our choice of $\cH$.

  It remains to show that \eqref{eq:weighted-freiman-requirement-for-set-S} holds with $S = \oN{v} + v$.
  Observe that if $w = v' + v$ for some $v' \in \oN{v}$, then $v \in (w - U) \cap U$.
  Hence,
  \begin{equation}\label{eq:weighted-freiman-lower-bound-for-S-surplus}
    \sum_{w \in \oN{v} + v} \max_{u \in (w - U) \cap U} f(u) \ge \sum_{w \in \oN{v} + v} f(v) = \big|\oN{v}\big| \, f(v) \ge (s + 1) f(v),
  \end{equation}
  where in the last inequality we used that $$|\oN{v}| = |\cH \cap U'| + 1 \ge \rank(U') + 1 = s + 1$$ which is due to our choice of $\cH$ by \Cref{obs:facet-intersection} and our assumption that $\rank(U') = s$.
  Combining \eqref{eq:weighted-freiman-first-case-inductive-hypothesis-application} and \eqref{eq:weighted-freiman-lower-bound-for-S-surplus}, this completes the induction step when $\rank(U') = \rank(U)$.

  We may therefore assume that $\rank(U') = s - 1$; that is, the removal of the vertex $v$ decreases the rank of $U$.
  It will be useful for this case to recall \eqref{eq:weighted-freiman-criterion-for-choice-of-v}, our criterion for the choice of $v$, in the following (trivially) equivalent form:
  \begin{equation}\label{eq:weight-of-v}
    \sum_{u \in U} f(u) \ge (s + 1) f(v).
  \end{equation}
  Applying our induction hypothesis to $U'$ yields
  \begin{equation}\label{eq:weighted-freiman-second-case-inductive-hypothesis}
    \sum_{w \in U' + U'} \max_{u \in (w - U') \cap U'} f(u) \ge \frac{s}{2} \sum_{u' \in U'} f(u'),
  \end{equation}
  so, to deduce the bound \eqref{eq:key-weighted-freiman}, it is enough to add to \eqref{eq:weighted-freiman-second-case-inductive-hypothesis} the term
  \begin{equation}\label{eq:weighted-freiman-second-case-reduced-goal}
    \Big(\frac{1}{2} \sum_{u' \in U'} f(u')\Big) + \frac{s + 1}{2} f(v)
  \end{equation}
  using elements of $(U + U) \setminus (U' + U')$.

  In order to add \eqref{eq:weighted-freiman-second-case-reduced-goal} to \eqref{eq:weighted-freiman-second-case-inductive-hypothesis}, we argue that $U' + U'$, $\{2v\}$ and $U' + v$ are disjoint.
  This follows from our assumption that $\rank(U') < \rank(U)$, which implies that $v \not \in \mathcal{U}$, where $\mathcal{U}$ is an affine subspace with dimension $\rank(U')$ containing $U'$.
  We can therefore conclude that
  \begin{equation}\label{eq:weighted-freiman-second-case-intermediate-lower-bound}
    \sum_{w \in U + U} \max_{u \in (w - U) \cap U} f(u) \ge \frac{s}{2} \sum_{u' \in U'} f(u') + f(v) + \sum_{w \in U' + v} \max_{u \in (w - U) \cap U} f(u).
  \end{equation}

  Our first observation towards bounding the right-hand side of \eqref{eq:weighted-freiman-second-case-intermediate-lower-bound} is that
  \begin{equation*}
    \sum_{w \in U' + v} \max_{u \in (w - U) \cap U} f(u) \ge \sum_{u' \in U'} f(u')
  \end{equation*}
  because if $w = u' + v$ for some $u' \in U'$, then $u' \in (w - U) \cap U$.
  It will therefore suffice to show that
  \begin{equation*}
    f(v) + \sum_{u' \in U'} f(u') \ge \Big(\frac{1}{2} \sum_{u' \in U'} f(u')\Big) + \frac{s + 1}{2} f(v),
  \end{equation*}
  or, equivalently,
  \begin{equation*}
    2f(v) + \sum_{u' \in U'} f(u') \ge (s + 1) f(v).
  \end{equation*}
  We now use that our choice of $v$ satisfies \eqref{eq:weight-of-v}, which implies that
  \begin{equation*}
    2f(v) + \sum_{u' \in U'} f(u') \ge \sum_{u \in U} f(u) \ge (s + 1) f(v),
  \end{equation*}
  and hence completes the proof of the induction step.
\end{proof}

With \Cref{stmt:weighted-freiman}, we are now ready to prove \Cref{stmt:phase-3}.
\begin{proof}[Proof of \Cref{stmt:phase-3}]
  Our first claim is that finding a $T' \subseteq X$ such that $|T'| \le |Z|$ and
  \begin{equation}\label{eq:phase-3-alternative-goal}
    \Big|\big(X + T'\big) \setminus \big(X + \equivc{0}\big) \Big| \ge \sum_{w \in (Z^* + Z^*) \setminus Z} \max_{z \in (w - Z^*) \cap Z^*} \big| \equivc{z} \big|,
  \end{equation}
  where $Z^* = Z \setminus \{0\}$, is enough to complete the proof.
  To see that, apply \Cref{stmt:weighted-freiman} with $U = Z^*$ and $f(z) = \big| \equivc{z} \big|$ to get the lower bound
  \begin{equation}\label{eq:phase-3-lower-bound-with-weighted-freiman}
    \sum_{w \in Z^* + Z^*} \max_{z \in (w - Z^*) \cap Z^*} \big| \equivc{z} \big| \ge \frac{\rank(Z^*) + 1}{2} \sum_{z \in Z^*} \big| \equivc{z} \big| \ge \frac{r - r_W}{2} \big(|X| - \big| \equivc{0} \big|\big),
  \end{equation}
  where in the last equality we have used that
  \begin{equation*}
    Z^* = Z \setminus \{0\}, \quad  X = \bigcup_{z \in Z} \equivc{z} \quad \text{and} \quad \rank(Z^*) \ge r - r_W - 1,
  \end{equation*}
  by our assumptions that $Z = \Pi_{W^\perp}(X)$, $\rank(X) \ge r$ and $r_W = \rank(W)$.

  However, the left-hand side of \eqref{eq:phase-3-lower-bound-with-weighted-freiman} is considering elements $w \in Z$ that are not in the sum in \eqref{eq:phase-3-alternative-goal}.
  This is not an issue because
  \begin{equation}\label{eq:phase-3-upper-bound-on-zero-fibre-term}
    \sum_{w \in Z} \max_{z \in (w - Z^*) \cap Z^*} \big| \equivc{z} \big| \le \eta |Z| |X|
  \end{equation}
  follows from our assumption that $\big| \equivc{z} \big| \le \eta |X|$ for all $z \in Z^*$.
  We obtain the claim that $T'$ as specified finishes the proof by substituting \eqref{eq:phase-3-upper-bound-on-zero-fibre-term} and \eqref{eq:phase-3-lower-bound-with-weighted-freiman} into \eqref{eq:phase-3-alternative-goal}:
  \begin{equation*}
    \sum_{w \in (Z + Z) \setminus Z} \max_{z \in (w - Z^*) \cap Z^*} \big| \equivc{z} \big| \ge \frac{r - r_W}{2} \big(|X| - \big| \equivc{0} \big|\big) - \eta |Z| |X|.
  \end{equation*}

  As it now suffices to find $T' \subseteq X$ such that $|T'| \le |Z|$ and \eqref{eq:phase-3-alternative-goal} holds, we simply need to repeat the proof of \eqref{eq:weighted-freiman-warm-up} given in the warm-up, but with the set $X + \equivc{0}$ removed.
  That is, let $Z^* = \{z_1, \ldots, z_m\}$, and define
  \begin{equation*}
    T' = \{y_1, \ldots, y_m\}
  \end{equation*}
  where each $y_i$ is an arbitrary element of $\equivc{z_i}$ for $i \in \{1, \ldots, m\}$.
  The first step to show that $T'$ satisfies \eqref{eq:phase-3-alternative-goal} is partitioning $X + T'$ into fibres of $Z + Z$ to obtain
  \begin{equation*}
    \Big| \big(X + T'\big) \setminus \big(X + \equivc{0}\big) \Big| = \sum_{w \in Z + Z} \Big| \big( [w] \cap (X + T') \big) \setminus \big( X + \equivc{0} \big) \Big|.
  \end{equation*}

  In order to handle the set removal, we claim that if $w \not \in Z$, then the sets $[w]$ and $X + \equivc{0}$ are disjoint, and therefore
  \begin{equation}\label{eq:phase-3-lower-bound-from-ignoring-w-in-Z}
    \sum_{w \in Z + Z} \Big| \big( [w] \cap (X + T') \big) \setminus \big( X + \equivc{0} \big) \Big| \ge \sum_{w \in (Z + Z) \setminus Z} \big| [w] \cap (X + T') \big|.
  \end{equation}
  To see this, simply note that if $x \in X$ and $x' \in \equivc{0}$, then
  \begin{equation*}
    \Pi_{W^\perp}(x + x') = \Pi_{W^\perp}(x) + \Pi_{W^\perp}(x') \in Z + 0 = Z
  \end{equation*}
  by the definitions of $\equivc{0}$ and $Z$, and therefore $\Pi_{W^\perp}(X + \equivc{0}) \subseteq Z$.

  Having established \eqref{eq:phase-3-lower-bound-from-ignoring-w-in-Z}, we can proceed (almost) like in the warm-up.
  Restricting our attention to $Z^* + Z^* \subseteq Z + Z$, observe that every $w \in Z^* + Z^*$ can be written as
  \begin{equation}\label{eq:phase-3-ways-to-write-w-in-Z*+Z*}
    w = z + z' \quad \text{where} \quad z, z' \in (w - Z^*) \cap Z^*.
  \end{equation}
  For each $w \in Z^* + Z^*$, then, we pick the pair $(z, z') \in (Z^*)^2$ satisfying \eqref{eq:phase-3-ways-to-write-w-in-Z*+Z*} that maximises $\big| \equivc{z} \big|$.
  Let $y$ be the (unique) element of $T' \cap [z']$, and note that, as $\equivc{z} + y \subseteq [w] \cap (X + T')$, we can conclude that
  \begin{equation}\label{eq:phase-3-claim-for-fibre}
    \big|[w] \cap (X + T')\big| \ge \max_{z \in (w - Z^*) \cap Z^*} \big| \equivc{z} \big|.
  \end{equation}
  Replacing \eqref{eq:phase-3-claim-for-fibre} into \eqref{eq:phase-3-lower-bound-from-ignoring-w-in-Z} yields that \eqref{eq:phase-3-alternative-goal} holds for $T'$
  \begin{equation*}
    \sum_{w \in (Z + Z) \setminus Z} \big| [w] \cap (X + T') \big| \ge \sum_{w \in (Z^* + Z^*) \setminus Z} \max_{z \in (w - Z^*) \cap Z^*} \big| \equivc{z} \big|,
  \end{equation*}
  and the proof therefore follows from our first claim.
\end{proof}

\section{The supersaturation result}\label{sec:supsat}

This \namecref{sec:supsat} is dedicated to the proof of the following theorem, restated for convenience:
\supsat*

Now that we have \Cref{stmt:freiman-lemma-via-few-translates}, proving \Cref{stmt:supsat} is simple.
Assume that $Y$ is small, that is, $|Y| \le (1 - \gamma)(d + 1)|X|/2$; our goal is to show that it misses many pairs of $X^2$.
If we can find $\eps|X|$ elements $x^{(i)} \in X$ such that $Y$ contains at most a $1 - c$ proportion of $X + x^{(i)}$, we are done.
To do that, we will use \Cref{stmt:freiman-lemma-via-few-translates} to find a small set $T \subseteq X$ such that $|X + T| - |Y| \gtrsim \gamma (d + 1)|X|$.
By the pigeonhole principle, it follows that there exists an $x^{(i)} \in T$ such that $Y$ misses many elements of $X + x^{(i)}$.
We can then remove $x^{(i)}$ from $X$ and repeat the process until the dimension of $X$ drops below its original value (removing the $x^{(i)}$ is a simple way to avoid using the same element twice while keeping our working set large).
As we assumed that $X$ has $\eps$-robust \Freiman dimension $d$, this can only happen after we have removed $\eps|X|$ translates.

This is essentially the proof except for the fact that \Cref{stmt:freiman-lemma-via-few-translates} requires $X \subseteq \R^d$, and the set in \Cref{stmt:supsat} is a subset of $\Zmodn$.
To handle this, we use \Freiman isomorphisms, relying on the fact that if $\phi$ is a \Freiman isomorphism, then $|X_1 + X_2| = |\phi(X_1) + \phi(X_2)|$.

\begin{proof}[Proof of \Cref{stmt:supsat}]
  Assume that\footnote{We use $\gamma'$ instead of $\gamma$ because its value is (slightly) less than the $\gamma$ in the application of \Cref{stmt:freiman-lemma-via-few-translates}.} $|Y| \le (1 - \gamma')(d + 1)|X|/2$.
  We claim that there exists a sequence of distinct elements $x^{(1)}, \dots, x^{(t)} \in X$, where $t = \eps |X| / 4$, such that the sets $X_i = X \setminus \{x^{(1)}, \dots, x^{(i)}\}$ have the following two properties
  \begin{equation}\label{eq:xi-requirements}
    d_i = \dimF(X_i) \ge d \quad \text{ and } \quad \big|(X_i + x^{(i + 1)}) \setminus Y\big| \ge 4 c|X|.
  \end{equation}
  The first property holds because, for all $i \le t$,
  \begin{equation}\label{eq:size-of-Xi}
    |X_i| \ge |X| - i \ge |X| - t = \left( 1 - \frac{\eps}{4} \right ) |X|,
  \end{equation}
  so we still have $\dimF(X_i) \ge d$, since $X$ has $\eps$-robust \Freiman dimension $d$.

  To prove that $X_i$ satisfies the second property in \eqref{eq:xi-requirements}, we will show how to select each $x^{(i + 1)}$.
  That is, assume that we have distinct translates $\{x^{(1)}, \dots, x^{(i)}\}$ such that the set $X_i$ satisfies \eqref{eq:xi-requirements}.
  Since $\dimF(X_i) = d_i \ge d$, there exists a \Freiman isomorphism  ${\phi_i : X_i \to X_i'}$ such that ${X_i' \subseteq \R^{d_i}}$ has full rank.
  If $\gamma' > 2^6 d^{-1/3}$, then we can apply \Cref{stmt:freiman-lemma-via-few-translates} to the set $X_i'$ with $r = d$ and $\gamma = 2^{-6} \gamma'$, to obtain a set $T_i' \subseteq X_i'$ such that
  \begin{equation*}
    |T_i'| \le \frac{2^6 (d + 1)}{\gamma'} \quad \text{ and } \quad |X_i' + T_i'| \ge \left(1 - \frac{\gamma'}{4}\right) \frac{(d + 1)|X_i'|}{2}.
  \end{equation*}
  Otherwise, we have $\gamma' \le 2^6 d^{-1/3}$, \ie $d \le 2^{18}\gamma'^{-3}$.
  In this case, we can\footnote{As it is stated, \Cref{stmt:jing-mudgal-freiman} can only be used for $d = d_i$, but that could result in too many translates.
  To circumvent this issue, we can randomly project the set to $\R^d$, and apply \Cref{stmt:jing-mudgal-freiman} to the projected set instead, using the randomness to avoid collisions.} apply \Cref{stmt:jing-mudgal-freiman} and obtain $T_i' = \{x_1, \ldots, x_C\} \subseteq X_i'$ such that
   \begin{equation*}
    |X_i' + T_i'| \ge (d + 1)|X_i'| - 5(d + 1)^3 \ge \frac{(d + 1)|X_i'|}{2},
  \end{equation*}
  for some constant $C = C(\gamma')$, since $X_i$ satisfies \eqref{eq:size-of-Xi}, $X$ is sufficiently large and $d \le 2^{18}\gamma'^{-3}$.
  Therefore, it follows that we have a set of translates $T_i'$ of size at most $2^{-6} \gamma' (d + 1)/c$ for some constant $c = c(\gamma') > 0$, in either case.

  As $\phi_i$ is a \Freiman isomorphism, we know that the preimage $T_i = \phi_i^{-1}(T_i')$ satisfies
  \begin{equation*}
    |X_i + T_i| \ge \left( 1 - \frac{\gamma'}{4} \right) \frac{(d + 1)|X_i|}{2} = \left( 1 - \frac{\gamma'}{4} \right ) \frac{(d + 1)(|X| - i)}{2}
  \end{equation*}
  and ${|T_i| \le 2^{-6} \gamma' (d + 1)/c}$.
  Since $i \le t = \eps |X| / 4 < \gamma' |X| / 4,$ this is at least
  \begin{equation*}
    |X_i + T_i| \ge \left(1 - \frac{\gamma'}{4}\right)^2 \frac{(d + 1)|X|}{2}
    \ge \left(1 - \frac{\gamma'}{2}\right) \frac{(d + 1) |X|}{2}.
  \end{equation*}
  Recalling our assumption $|Y| \le (1 - \gamma') (d + 1)|X|/2$, it follows that
  \begin{equation*}
    |(X_i + T_i) \setminus Y| \ge |X_i + T_i| - |Y| \ge \frac{\gamma' (d + 1) |X|}{4}.
  \end{equation*}
  Thus, by the pigeonhole principle, there exists $x^{(i + 1)} \in T_i$ such that
  \begin{equation*}
    |(X_i + x^{(i + 1)}) \setminus Y| \ge \frac{\gamma' (d + 1) |X|}{4} \left ( \frac{\gamma' (d + 1)}{2^6 c} \right)^{-1} > 4 c|X|,
  \end{equation*}
  since $|T_i| \le 2^{-6} \gamma' (d + 1)/c$ and $T_i \subseteq X_i$.
  Repeating this for each $i$ from 1 to $t$ thus yields the sets $X_i$ and $\{x^{(1)}, \ldots, x^{(t)}\}$ satisfying \eqref{eq:xi-requirements}.

  Now that we have the sets $X_i$, note that each $X + x^{(i)}$ contributes $4 c |X|$ ordered pairs whose sum are not in $Y$.
  This gives us a total of
  \begin{equation*}
    4 c |X| t \ge c \eps |X|^2
  \end{equation*}
  such pairs, because $t = \eps |X| /4$.
\end{proof}

\section{An upper bound for the independence number}\label{sec:main-theorem}

In this section, we prove the upper bound part of our main result, \Cref{stmt:main-result}:
\begin{thm}\label{stmt:upper-bound}
  Let $n$ be a prime number and let $p = p(n)$ satisfy $p \ge (\log n)^{-1/80}$.
  The random Cayley sum graph $\Gamma_p$ of $\Zmodn$ satisfies
  \begin{equation*}
    \alpha(\Gamma_p) \le \big(2 + o(1)\big) \log_{\frac{1}{1 - p}} n
  \end{equation*}
  with high probability as $n \to \infty$.
\end{thm}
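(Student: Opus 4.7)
The plan is to fix $k = (2 + \gamma)\log_{1/(1-p)} n$ for a slowly decaying $\gamma = \gamma(n) \to 0$, and to bound $\pr(\alpha(\Gamma_p) \ge k)$ via the union bound
\begin{equation*}
  \pr(\exists X \in \cX : X \rsum X \subseteq \complement{A_p}) \le \sum_{i = 1}^{3} \pr(\exists X \in \cX_i : X \rsum X \subseteq \complement{A_p})
\end{equation*}
over the partition $\cX = \cX_1 \cup \cX_2 \cup \cX_3$ from the overview, with $\beta = 1/40$ and $\delta = o(1)$. The term for $\cX_3$ (doubling $\ge \delta k / 10$) I would handle by lifting the ``quasi-random'' bound of Green--Morris; their isoperimetric argument extends from $p = 1/2$ to $p = o(1)$ with only minor changes. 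The term for $\cX_2$ (middle doubling) I would handle using Green's count of sets with prescribed sumset size, after which the naive union bound $\sum_X (1-p)^{|X \rsum X|}$ is already $o(1)$ because $|X \rsum X| > k^{1+\beta}$ is large enough to overcome the entropy. Both of these are essentially established techniques and are off the critical path.

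For $\cX_1$, the first step is a robustness reduction: for each $X \in \cX_1$, I would extract a subset $X' \subseteq X$ with $|X'| \ge (1 - \eps)|X|$ having $\eps$-robust \Freiman dimension equal to some $d' = \dimF(X') \le \sigma[X] \le k^\beta$, noting that $X' \rsum X' \subseteq \complement{A_p}$ whenever $X \rsum X \subseteq \complement{A_p}$. I would then apply the $\Zmodn$-version of Chang's theorem (the proper-progression variant with $\dim(P) \le \dimF(X')$ obtained by combining Green--Ruzsa with a discrete John-type lemma, as mentioned after \Cref{stmt:chang-freiman-ruzsa}) to embed $X'$ in a proper GAP $P$ of dimension $d \le d'$ and size $s \le \exp(C' \sigma^2 (\log \sigma)^3)k = n^{o(1)}$. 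Finally, I would apply \Cref{stmt:fingerprints} with parameters $k' = |X'|$, $m = |X' \rsum X'| \le k^{1+\beta}$ and $\eps = k^{-2\beta}$ to obtain a family $\cF(P)$ such that (i) some $F \in \cF(P)$ is contained in $X'$, (ii) $|F| \le C \eps^{-1}\sqrt{m \log m}$, and (iii) $|F \rsum F| \ge (1 - \gamma/10)(d + 1)k/2$.

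Stringing these pieces together through a final union bound, one arrives at
\begin{equation*}
  \pr(\exists X \in \cX_1 : X \rsum X \subseteq \complement{A_p}) \le \sum_{d, s} (ns)^{d + 1} \cdot s^{|F|} \cdot (1 - p)^{(1 - \gamma/10)(d + 1)k/2},
\end{equation*}
where $d$ ranges up to $k^\beta$, $s$ ranges up to $n^{o(1)}$, and $s^{|F|}$ upper bounds the number of fingerprints inside a fixed $P$. The choice of $k$ gives $(1-p)^{|F \rsum F|} \le n^{-(1 + \gamma/4)(d + 1)}$, so this term absorbs the $(ns)^{d+1}$ factor, and $s^{|F|} = n^{o(1)}$ by the computation in \eqref{eq:overview-bound}, which uses $|F| \le k^{3/4}$ and $\log s \le k^{3\beta}$. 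Hence each summand is at most $n^{-\gamma/5}$, and summing the polylogarithmic range of $(d, s)$ preserves the $o(1)$ bound.

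The main obstacle I expect is the careful interlocking of parameters at the boundary $p \approx (\log n)^{-1/80}$. The constant $C(\gamma)$ in \Cref{stmt:fingerprints} is governed by the supersaturation constant $c(\gamma)$, which in turn relies on \Cref{stmt:freiman-lemma-via-few-translates} having the number of translates depend \emph{linearly} on the rank; the Chang bound on $s$ contributes $\log s \le k^{3\beta}$; and the robustness reduction forces the shrinkage $k \mapsto k' \ge (1 - \eps)k$, which must be absorbed into $\gamma$ without destroying the $n^{-(1+\gamma/4)(d+1)}$ estimate above. Verifying that all three constraints simultaneously allow $\beta = 1/40$ at $p = (\log n)^{-1/80}$ is the delicate bookkeeping step; once it is in place, each of $\cX_1, \cX_2, \cX_3$ contributes $o(1)$ and we conclude that $\alpha(\Gamma_p) < k$ with high probability.
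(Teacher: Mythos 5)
Your approach is structurally identical to the paper's: partition $\cX$ by doubling, handle $\cX_2$ by Green's count, handle $\cX_3$ by a minor extension of the Green--Morris isoperimetric argument, and handle $\cX_1$ via the robustness reduction, a $\Zmodn$-version of Chang's theorem, the fingerprint theorem, and a union bound over GAPs and fingerprints. The final sum $\sum_d (ns)^{d+1}\, s^{|F|}\,(1-p)^{(1-O(\gamma))(d+1)k/2}$ matches the paper's \eqref{eq:sum-in-ub}. So this is the same proof.

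One slip is worth flagging, precisely because you identify the parameter bookkeeping as the delicate part. You write the GAP size bound as $s \le \exp(C'\sigma^2(\log\sigma)^3)k$ and deduce $\log s \le k^{3\beta}$, but that is the bound in \Cref{stmt:chang-freiman-ruzsa}, which holds for subsets of $\Z$. The $\Zmodn$ version you describe (Chan--Sisask's Green--Ruzsa theorem combined with the discrete John theorem of Tao--Vu, i.e.\ \Cref{stmt:green-ruzsa-with-dim(P)<=dimF(X)}) only gives $|P| \le \exp(C\kappa^4(\log\kappa)^2)|X|$ with $\kappa = 2\sigma$, so $\log s = O(k^{4\beta}(\log k)^2)$, which the paper weakens further to $\log s \le k^{1/5}$. (The overview of the paper informally quotes the $\Z$-version exponent, which may be the source of the confusion, but the proof in \Cref{sec:main-theorem} uses the weaker bound.) With the correct exponent, $|F|\log s \le k^{4/5}$ rather than $k^{61/80}$, and the condition $k^{4/5} = o(\log n)$ still holds when $p \ge (\log n)^{-1/80}$ since then $k \le 3(\log n)^{81/80}$ and $(81/80)\cdot(4/5) = 81/100 < 1$, so the final conclusion is unaffected --- but the margin is considerably thinner than your computation suggests, and if you were to optimise $\beta$ or push $p$ lower, using the $\Z$-bound would lead you astray.

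A smaller point: you apply \Cref{stmt:fingerprints} with $m = |X' \rsum X'|$ varying per set, while the paper fixes $m = 2\sigma k$ globally so that a single fingerprint collection $\cF_{|Y|, m, \eps}(P)$ covers all relevant $Y \subseteq P$. Your variant would require an additional (harmless, polynomial) union over $m$, or you could just adopt the paper's fixed choice, using that $|X' \rsum X'| \le 2\sigma k$ by the doubling bound and $|X'| \ge (1-2\eps\sigma)k$ (note also the robustness shrinkage is $(1-2\eps\sigma)$, not $(1-\eps)$, coming from \Cref{stmt:robust-subset}).
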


Throughout, we fix a small enough $\delta > 0$ and $k = (2 + 4\delta) \log_{\frac{1}{1 - p}}n$.
We will also follow the outline presented in \Cref{sec:overview} and use the notation defined there.
Each sub-collection $\cX_i$ requires different techniques to bound the probability that $\alpha(\Gamma_p) > k$, so we handle each separately and show all three go to 0 as $n \to \infty$.

\subsection{Bounding the probability over choices in $\cX_1$}

A brief recap: in this doubling range, there are far too many choices for $X \in \cX_1$ for a union bound to work.
The key observation is that we do not need to count every such $X$: if somehow going to a smaller subset $\Lambda \subseteq X$ reduces our choices considerably, this would also be enough, since the event $\{\Lambda \rsum \Lambda \subseteq \complement{A_p}\}$ contains $\{X \rsum X \subseteq \complement{A_p}\}$.
As a matter of fact, we will use this idea twice.

The first time we use this idea is in order to replace each set $X \in \cX_1$ by a large subset $X' \subseteq X$ whose \Freiman dimension is robust in the sense required by \Cref{stmt:supsat}, our supersaturation result.
We build the set $X'$ from $X$ by greedily removing small, ``bad'' subsets $B \subseteq X$ such that removing them from $X$ reduces its \Freiman dimension.
Each such removal reduces $\dimF(X)$ by at least one, so we finish in at most $\dimF(X) - 1$ steps.
A step reduces the size of the working set by at most $\eps |X|$, which would already give $|X'| \ge (1 - \dimF(X) \eps) |X|$.
To get a bound that depends on $\sigma$ instead of on $\dimF(X)$, we use the following trivial consequence of \Freiman's lemma, \Cref{stmt:freimans-lemma}, which we record for ease of reference later.

\begin{obs}\label{stmt:dim-at-most-twice-doubling}
  Let $X \subseteq \Zmodn$ with $\sigma[X] \le \sigma$.
  Then, $\dimF(X) + 1 \le 2 \sigma$.
\end{obs}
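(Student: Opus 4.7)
The plan is to apply \Cref{stmt:freimans-lemma} to a witness of the \Freiman dimension and rearrange. Let $d = \dimF(X)$, and fix a \Freiman isomorphism $\phi \colon X \to X'$ where $X' \subseteq \Z^d \subseteq \R^d$ has full rank. Since \Freiman isomorphisms preserve sumset sizes, we have $|X' + X'| = |X + X| \le \sigma|X| = \sigma|X'|$. Applying \Cref{stmt:freimans-lemma} to $X'$ then yields
\begin{equation*}
  \sigma|X'| \ge |X' + X'| \ge (d+1)|X'| - \binom{d+1}{2}.
\end{equation*}

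Rearranging, $(d+1 - \sigma)|X'| \le \binom{d+1}{2} = \frac{(d+1)d}{2}$. If $d + 1 \le \sigma$, the desired bound $d+1 \le 2\sigma$ holds trivially. Otherwise $d+1 - \sigma > 0$, and since a full-rank subset of $\R^d$ must contain at least $d+1$ affinely independent points (so $|X'| \ge d+1$), we may divide to obtain
\begin{equation*}
  d + 1 \le |X'| \le \frac{(d+1)d}{2(d+1-\sigma)},
\end{equation*}
which simplifies to $2(d+1-\sigma) \le d$, i.e., $d + 2 \le 2\sigma$, which certainly implies $d+1 \le 2\sigma$.

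This is a short calculation with no real obstacle; the only nontrivial input is \Freiman's lemma itself, and the only point to be careful about is that the ``$d+1 \le |X'|$'' step uses the affine-rank definition adopted in the paper (a finite set of rank $d$ cannot lie in a hyperplane, hence must have at least $d+1$ points). Everything else is a one-line rearrangement.
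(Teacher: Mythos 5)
Your proof is correct and follows essentially the same approach as the paper: pass to a full-rank \Freiman-isomorphic copy in $\Z^d$, apply \Freiman's lemma, and rearrange. The only cosmetic difference is that you do a case split on whether $d+1 \le \sigma$ and use $|X'| \ge d+1$, while the paper divides through by $|X|$ directly and uses the slightly weaker bound $|X| \ge d$; both give the claimed inequality.
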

\begin{proof}
  Let $d = \dimF(X)$, and take $X' \subseteq \Z^d$ of full rank to be the image of $X$ under a \Freiman isomorphism $\phi$.
  Applying \Freiman's lemma to $X'$ yields
  \begin{equation*}
    |X' + X'| \ge (d + 1)|X'| - \binom{d + 1}{2}
  \end{equation*}
  but $|X' + X'| \le \sigma |X'|$ and $|X'| = |X|$ by our choice of $\phi$.
  Dividing both sides by $|X|$ yields
  \begin{equation*}
    \sigma \ge d + 1 - \frac{d + 1}{2}
  \end{equation*}
  where we used $|X| \ge d$ to simplify the right-hand side.
  The proof follows by rearranging.
\end{proof}

The proof of the following \namecref{stmt:robust-subset} is just formalizing the sketch using \Cref{stmt:dim-at-most-twice-doubling}.

\begin{prop}\label{stmt:robust-subset}
  Let $X \subseteq \Zmodn$ with $\sigma[X] \le \sigma$ and let $\eps < 1/2\sigma$.
  There exists $d \in \N$ and $X' \subseteq X$ such that $|X'| > (1 - 2 \eps \sigma) |X|$ and $X'$ has $\eps$-robust \Freiman dimension $d$.
\end{prop}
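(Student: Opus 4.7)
The plan is a greedy reduction: iteratively remove small ``dimension-reducing'' subsets from $X$ until what remains has robust Fre\u{\i}man dimension, and then check that the total number of elements removed is small because the process cannot run for too many steps.

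Concretely, I would set $X_0 = X$ and define $X_{i+1}$ recursively as follows. If $X_i$ has $\eps$-robust Fre\u{\i}man dimension equal to $\dimF(X_i)$, stop and output $X' = X_i$ with $d = \dimF(X_i)$. Otherwise, by definition of failing to be $\eps$-robust, there exists $X_{i+1} \subseteq X_i$ with $|X_{i+1}| \ge (1-\eps)|X_i|$ and $\dimF(X_{i+1}) < \dimF(X_i)$. The crucial point is that this strict inequality forces $\dimF(X_{i+1}) \le \dimF(X_i) - 1$, so after at most $\dimF(X_0)$ steps the process must terminate (in the worst case, when $\dimF(X_k) = 0$, the set is trivially $\eps$-robust since no subset can have negative Fre\u{\i}man dimension).

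Let $k$ be the final index. Writing $B_i = X_i \setminus X_{i+1}$, we have $|B_i| \le \eps |X_i| \le \eps |X|$ for each $i$, so
\begin{equation*}
  |X| - |X_k| \;=\; \sum_{i=0}^{k-1} |B_i| \;\le\; k \eps |X|.
\end{equation*}
To finish, I invoke \Cref{stmt:dim-at-most-twice-doubling} with the hypothesis $\sigma[X] \le \sigma$, which gives $k \le \dimF(X_0) \le 2\sigma - 1 < 2\sigma$. Combining these yields $|X_k| > (1 - 2\eps\sigma)|X|$, and by our choice $\eps < 1/(2\sigma)$ this is positive, so the construction is nontrivial and the output $X' = X_k$ with $d = \dimF(X_k)$ satisfies both conclusions of the proposition.

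There is no real obstacle here; the only subtle point is realizing that one should bound the number of iterations by $\dimF(X)$ (which in turn is controlled by $\sigma$ via \Cref{stmt:dim-at-most-twice-doubling}) rather than trying to track the multiplicative loss $(1-\eps)^k$, which would give an exponentially worse bound. The greedy process is well-defined because a ``not $\eps$-robust'' set by definition has a large subset of strictly smaller Fre\u{\i}man dimension, and the terminating set is automatically $\eps$-robust by construction.
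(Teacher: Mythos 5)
Your proof is correct and takes essentially the same greedy-removal approach as the paper: iteratively strip off a small dimension-reducing subset, note that each step drops $\dimF$ by at least one so there are at most $\dimF(X)$ iterations, and bound $\dimF(X) < 2\sigma$ via \Cref{stmt:dim-at-most-twice-doubling}. The only cosmetic difference is that the paper's stopping condition removes sets of size $\le \eps|X|$ (relative to the original set) whereas you remove sets of size $\le \eps|X_i|$; both yield the same bound on the total loss and both make the output $\eps$-robust.
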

\begin{proof}
  We start an iterative process with $X_0 = X$ and $i = 0$.
  While there exists $B \subseteq X_i$ such that $$|B| \le \eps |X| \quad \text{ and } \quad \dimF(X_i \setminus B) < \dimF(X_i),$$ we define $X_{i + 1} = X_i \setminus B$.
  Let $t$ be the number of steps in this process, and let $X' = X_t$.

  First, notice that $X'$ has $\eps$-robust \Freiman dimension because there is no $B \subseteq X'$ to continue the process.
  Moreover, $t \le \dimF(X) - 1$ since each step reduces $\dimF(X_i)$ by at least one.
  Since $|X_i| \ge |X| - i \eps |X|$, it follows that
  \begin{equation*}
    |X'| \ge (1 - \eps t) |X| > (1 - \eps \dimF(X)) |X| > (1 - 2 \eps \sigma) |X|,
  \end{equation*}
  where in the last inequality we used \Cref{stmt:dim-at-most-twice-doubling}.
\end{proof}

The second time we employ the idea of going to smaller subsets is when we use the fingerprints given by \Cref{stmt:fingerprints}, whose statement we repeat here for convenience.
\hypertarget{stmt:fingerprints-repeated}{\fingerprints*}

As we remarked in the overview, applying the previous \namecref{stmt:fingerprints} to the (trivial) generalised arithmetic progression $\Zmodn$ would result in too many fingerprints.
To overcome this, we will use \Cref{stmt:green-ruzsa-with-dim(P)<=dimF(X)} below.
We will obtain said \namecref{stmt:green-ruzsa-with-dim(P)<=dimF(X)} by combining a strengthening of the Green--Ruzsa theorem due to \citet{CS13}, \Cref{stmt:green-ruzsa-linear-bound-dim(P)}, with the discrete John's theorem of \citet{TV08}, \Cref{stmt:discrete-john}.
We defer the details to \Cref{sec:changs-theorem-for-Zn}.

\begin{restatable}{cor}{changgreenruzsa}
\label{stmt:green-ruzsa-with-dim(P)<=dimF(X)}
  There exists $C > 0$ such that the following holds.
  Let $n \in \N$ be a prime, and let $\kappa \ge 2$.
  If $X \subseteq \Zmodn$ satisfies
  \begin{equation}\label{eq:assumptions-in-green-ruzsa}
    \sigma[X] \le \kappa \qquad \text{ and } \qquad C \kappa^3 (\log \kappa)^2 < |X| < \exp(-C \kappa^4 (\log \kappa)^2) n,
  \end{equation}
  then there is a generalised arithmetic progression $P \subseteq \Zmodn$ such that
  \begin{equation}\label{eq:green-ruzsa-properties-of-P}
    X \subseteq P, \qquad |P| \le \exp(C \kappa^4 (\log \kappa)^2 ) |X| \qquad \text{ and } \qquad \dim(P) \le \dimF(X).
  \end{equation}
\end{restatable}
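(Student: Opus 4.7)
The plan is to combine three results: a strengthening of the Green--Ruzsa theorem due to \citet{CS13}, Chang's theorem for the integers (\Cref{stmt:chang-freiman-ruzsa}), and the discrete John's theorem of Tao--Vu (\Cref{stmt:discrete-john}). The overall strategy is to first transfer $X$ from $\Zmodn$ to $\Z$ via a proper GAP obtained from the \citeauthor{CS13} result, then apply Chang's $\Z$-theorem to get the sharper dimension bound $\dim(P) \le \dimF(X)$, and finally push the resulting structure back into $\Zmodn$ using discrete John's theorem to guarantee properness.

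To start, I would invoke \citeauthor{CS13}'s version of Green--Ruzsa to obtain a \emph{proper} GAP $Q \subseteq \Zmodn$ with $X \subseteq Q$, $|Q| \le \exp(C_1 \kappa^{a_1} (\log \kappa)^{b_1})|X|$, and dimension $d := \dim(Q)$ bounded polynomially in $\kappa$. The range of $|X|$ in \eqref{eq:assumptions-in-green-ruzsa} is exactly what is needed to apply that theorem (in particular, to ensure that $Q$ does not wrap around $\Zmodn$). Because $Q$ is proper, the canonical parametrization $\phi\colon B \to Q$ from the box $B = [0,\ell_1) \times \cdots \times [0,\ell_{d}) \subseteq \Z^{d}$ is a Freiman $2$-isomorphism, so pulling $X$ back through $\phi$ yields a set $X_1 \subseteq \Z^{d}$ with the same doubling and the same Freiman dimension as $X$.

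Embedding $\Z^{d}$ into $\Z$ via $(x_1,\dots,x_{d}) \mapsto \sum_{i} M^{i-1} x_{i}$ for $M$ larger than twice the coordinate diameter of $X_1 + X_1$ is another Freiman $2$-isomorphism, producing $X_2 \subseteq \Z$ with $\sigma[X_2] = \sigma[X] \le \kappa$ and $\dimF(X_2) = \dimF(X)$. Applying Chang's theorem in the form of \Cref{stmt:chang-freiman-ruzsa} to $X_2$ outputs a GAP $R \subseteq \Z$ with $X_2 \subseteq R$, $\dim(R) \le \dimF(X)$, and $|R| \le \exp(C' \kappa^{2}(\log \kappa)^{3}) |X_2|$. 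Multiplying this with the size bound from the first step leaves the total well within the target $\exp(C \kappa^{4}(\log \kappa)^{2})$.

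The last --- and trickiest --- step is to push $R$ back to $\Zmodn$ while preserving both its dimension and its GAP structure. Naively lifting the generators of $R$ through $\phi^{-1}$ produces a set containing $X$, but not necessarily a proper GAP in $\Zmodn$: the lifted generators may satisfy spurious relations modulo $n$ that collapse the effective dimension below $\dimF(X)$, or cause the progression to be non-proper. This is exactly what \Cref{stmt:discrete-john} is designed to repair: it allows us to replace $R$ by a comparable proper progression whose generators pull back cleanly through $\phi^{-1}$ to generators of a proper GAP $P \subseteq \Zmodn$ of the same dimension. The main obstacle is this last push-back and the bookkeeping needed to check that the three exponential factors combine within $\exp(C \kappa^{4}(\log \kappa)^{2})$ under the upper bound on $|X|$ in \eqref{eq:assumptions-in-green-ruzsa}; the remainder of the argument is a mechanical composition of the three standard tools.
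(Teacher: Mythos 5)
Your route differs substantially from the paper's: you detour through $\Z$ via Chang's $\Z$-theorem (\Cref{stmt:chang-freiman-ruzsa}), whereas the paper stays in $\Z^d$ and gets the dimension bound by intersecting with the minimal subspace containing $\phi(X)$ and applying discrete John there. This detour creates a gap that your sketch does not close. After applying Chang to $X_2 \subseteq \Z$ you hold a GAP $R \subseteq \Z$ with $\dim(R) \le \dimF(X)$, but there is no map carrying $R$ back to $\Zmodn$: the scaling embedding $\iota\colon \Z^d \to \Z$ is not surjective, so the generators of $R$ need not lie in $\iota(\Z^d)$, and reduction of $R$ modulo $n$ is not Freiman-compatible. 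Your appeal to \Cref{stmt:discrete-john} here is misplaced --- that theorem converts a symmetric \emph{convex} progression in a lattice into a comparable GAP; it is not a tool for lifting a GAP in $\Z$ to one in $\Zmodn$, and it cannot be applied to $R$ as stated. The paper avoids this entirely: it applies discrete John to the symmetric convex progression $(\cH \cap P'_d) - (\cH \cap P'_d) \subseteq \Z^d$, where $\cH$ is the span of $\phi(X)$ (so $\dim \cH \le \dimF(X)$), obtains a GAP $P_d \subseteq \Z^d$, and pushes it forward via the affine extension $\psi\colon \Z^d \to \Zmodn$ of $\phi^{-1}$ --- a map that manifestly sends GAPs to GAPs of no larger dimension. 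Once you are in $\Z^d$, passing to $\Z$ buys you nothing and costs you the clean push-back.

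A smaller but real issue: mere properness of the GAP $Q$ given by \citeauthor{CS13}'s theorem does \emph{not} make the coordinate map $\phi\colon B \to Q$ a Freiman ($2$-)isomorphism; properness gives a bijection but does not prevent collisions among sums $\phi(a) + \phi(b)$. You need $2$-properness, which is exactly why the paper invokes \cite[Lemma 6]{CS13} (\Cref{stmt:2-proper-from-proper}) to upgrade $Q$ before defining $\phi$. This is easy to repair, but it is a step you skipped.
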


We are now ready to prove that
\begin{equation}\label{eq:prob-of-X1}
  \pr(\exists \, X \in \cX_1 : X \rsum X \subseteq \complement{A_p}) \to 0 \quad \text{ as } \quad n \to \infty.
\end{equation}
\begin{proof}[Proof of \eqref{eq:prob-of-X1}]
  Recall that $k = (2 + 4\delta) \log_{\frac{1}{1 - p}}n$, and that for all $X \in \cX_1$, we know that $|X| = k$ and $\sigma[X] \le k^{1/40}$.
  Fixing $\eps = k^{-1/20}$, we can apply \Cref{stmt:robust-subset} to $X$ with $\eps$ and $\sigma = k^{1/40}$ to conclude that every such set contains a $X'$ of size at least
  \begin{equation}\label{eq:ub-lower-range-size-of-X'}
    |X'| \ge (1 - 2 \eps \sigma)k \ge (1 - 2 k^{-1/40})k
  \end{equation}
  with $\eps$-robust \Freiman dimension $d_X$, for some $d_X \in \N$.
  Observe that \eqref{eq:ub-lower-range-size-of-X'} implies that the doubling of $X'$ is at most $2\sigma$:
  \begin{equation}\label{eq:ub-lower-range-doubling-of-X'}
    \sigma[X'] = \frac{|X' + X'|}{|X'|} \le \frac{|X + X|}{|X'|} \le 2 \sigma,
  \end{equation}
  for all sufficiently large $k$.
  We fix\footnote{Abusing notation to denote such a mapping via the $'$ symbol.} one $X'$ for each $X \in \cX_1$, and denote by $\cX_1'$ the collection of all such $X'$.

  As we remarked before, $X' \rsum X' \subseteq \complement{A_p}$ is implied by $X \rsum X \subseteq \complement{A_p}$ for each $X \in \cX_1$.
  Therefore, we have the bound
  \begin{equation*}
    \pr\big( \exists \, X \in \cX_1 : X \rsum X \subseteq \complement{A_p} \big) \le \pr\big( \exists \, X' \in \cX_1' : X' \rsum X' \subseteq \complement{A_p} \big).
  \end{equation*}
  Moreover, let $\ccY(P)$ be the collection of subsets $Y \subseteq P$ with
  \begin{equation}\label{eq:ub-lower-range-properties-of-Y}
    (1 - 2 k^{-1/40})k \le |Y| \le k \qquad \text{and} \qquad \sigma[Y] \le 2 \sigma
  \end{equation}
  such that $Y$ has $\eps$-robust \Freiman dimension $d_Y$ for some $d_Y \ge \dim(P)$.
  We claim that we can take another union bound:
  \begin{equation}\label{eq:union-bound-over-gaps}
    \pr( \exists \, X' \in \cX_1' : X' \rsum X' \subseteq \complement{A_p} ) \le \sum_{P \in \cP(\Zmodn)} \pr(\exists \, Y \in \ccY(P) : Y \rsum Y \subseteq \complement{A_p} )
  \end{equation}
  where $\cP(\Zmodn)$ is the collection of generalised arithmetic progressions $P \subseteq \Zmodn$ such that
  \begin{equation}\label{eq:definition-of-cal(P)}
    |P| \le \exp(k^{1/5}).
  \end{equation}

  In order to prove \eqref{eq:union-bound-over-gaps}, it is enough to show that, for every $X' \in \cX_1'$, there exists a generalised arithmetic progression $P \in \cP(\Zmodn)$ such that $X' \in \ccY(P)$; we will do so by applying \Cref{stmt:green-ruzsa-with-dim(P)<=dimF(X)} with $\kappa = 2\sigma$.
  From this application, we will obtain a generalised arithmetic progression $P \in \cP(\Zmodn)$ such that
  \begin{equation}\label{eq:ub-lower-range-properties-of-P-via-rect-chang}
    \dim(P) \le \dimF(X') = d_X \qquad \text{ and } \qquad X' \subseteq P.
  \end{equation}
  The property $P \in \cP(\Zmodn)$ follows from \eqref{eq:green-ruzsa-properties-of-P}, as
  \begin{equation}\label{eq:ub-lower-range-size-of-P-from-k}
    |P| \le \exp(C' \sigma^4 (\log \sigma)^2) k \le \exp\hspace{-2pt}\big(C' k^{1/10} (\log k)^2 \big) k \le \exp\hspace{-2pt}\big(k^{1/5}\big),
  \end{equation}
  where we used that $\sigma = k^{1/40}$ in the second inequality, and in the last one we used that $k$ is sufficiently large.
  We now confirm that we can apply \Cref{stmt:green-ruzsa-with-dim(P)<=dimF(X)} as we want, by verifying that every $X' \in \cX_1'$ satisfies \eqref{eq:assumptions-in-green-ruzsa}.
  The lower bound holds because
  \begin{equation*}
    |X'| \ge (1 - 2 k^{-1/40})k \ge k^{1/10} \ge C' \sigma^3 (\log \sigma)^2,
  \end{equation*}
  by \eqref{eq:ub-lower-range-size-of-X'} and using that $k$ is sufficiently large, whereas the upper bound in \eqref{eq:assumptions-in-green-ruzsa} follows from
  \begin{equation*}
    \exp(C' \sigma^4 (\log \sigma)^2) |X'| \le \exp(k^{1/5}) \le \exp( (\log n)^{2/5} ) < n,
  \end{equation*}
  where we used \eqref{eq:ub-lower-range-size-of-P-from-k}, and that $k \le (\log n)^2$ and $n$ is sufficiently large.

  We now claim that $X' \in \ccY(P)$, where $P \in \cP(\Zmodn)$ is the generalised arithmetic progression given by applying \Cref{stmt:green-ruzsa-with-dim(P)<=dimF(X)} to $X' \in \cX_1'$.
  To see this, simply note that the conditions in \eqref{eq:ub-lower-range-properties-of-Y} follow from \eqref{eq:ub-lower-range-size-of-X'} and \eqref{eq:ub-lower-range-doubling-of-X'}, and the $\eps$-robust \Freiman dimension bound $d_X \ge \dim(P)$ follows from \eqref{eq:ub-lower-range-properties-of-P-via-rect-chang} and \Cref{stmt:robust-subset}, so this completes the proof of \eqref{eq:union-bound-over-gaps}.

  In order to bound the term in the right-hand side of \eqref{eq:union-bound-over-gaps}, we analyse the contribution of each fixed $P \in \cP(\Zmodn)$.
  Notice that if $\ccY(P)$ is empty, then the probability term is equal to 0, so we may assume that $\ccY(P)$ is non-empty.
  Rather than directly taking a union bound over choices of $Y \in \ccY(P)$, our final union bound is over a collection of fingerprints $\cF(P)$:
  \begin{align}
    \pr( \exists \, Y \in \ccY(P) : Y \rsum Y \subseteq \complement{A_p} ) & \le \pr( \exists \, F \in \cF(P) : F \rsum F \subseteq \complement{A_p} ) \label{eq:ub-lower-range-union-bound-over-fingerprints} \\
                                                                        & \le |\cF(P)| \max_{F \in \cF(P)} \pr( F \rsum F \subseteq \complement{A_p} ). \label{eq:bound-on-each-term-of-fingerprint-union-bound}
  \end{align}
  That is true as long as, for each $Y \in \ccY(P)$, there exists $F \in \cF(P)$ such that $F \subseteq Y$; again we are using that $F \rsum F \subseteq Y \rsum Y$.
  We claim that applying \hyperlink{stmt:fingerprints-repeated}{\nameCref{stmt:fingerprints}~\ref*{stmt:fingerprints}} to $P$ with $\gamma = \gamma(\delta)$ (to be determined later) and $m = 2\sigma k$ yields such a collection of fingerprints $\cF(P)$.

  First, we define a candidate for $\cF(P)$ which proves our claim, and later we show that we can construct this candidate.
  Our candidate for $\cF(P)$ is defined as
  \begin{equation}\label{eq:application-of-fingerprints-to-P}
    \cF(P) = \bigcup_{Y \in \ccY(P)} \cF_{|Y|, m, \eps}(P)
  \end{equation}
  where $\cF_{|Y|, m, \eps}(P)$ is the collection of fingerprints given by \hyperlink{stmt:fingerprints-repeated}{\nameCref{stmt:fingerprints}~\ref*{stmt:fingerprints}}.
  For $\cF(P)$ to prove our claim, we must thus show that, for each $Y \in \ccY(P)$, there is an $F \in \cF_{|Y|, m, \eps}(P)$ such that $F \subseteq Y$.
  By \hyperlink{stmt:fingerprints-repeated}{property~\ref*{item:fingerprint-is-contained-in-X} of \nameCref{stmt:fingerprints}~\ref*{stmt:fingerprints}}, it suffices for each $Y$ to have $\eps$-robust \Freiman dimension $d_Y \ge \dim(P)$ and $|Y + Y| \le m = 2\sigma k$.
  These, however, hold for $Y$ by \eqref{eq:ub-lower-range-properties-of-Y}, and so we have that $\cF(P)$ is a valid candidate of fingerprints for $P$.

  To confirm that we can apply \hyperlink{stmt:fingerprints-repeated}{\nameCref{stmt:fingerprints}~\ref*{stmt:fingerprints}} to $P$ as in \eqref{eq:application-of-fingerprints-to-P}, we need to check that
  \begin{equation}\label{eq:ub-lower-range-requirements-for-fingerprints}
    m \ge \frac{|Y|(\dim(P) + 1)}{2}
  \end{equation}
  for all $Y \in \ccY(P)$.
  \eqref{eq:ub-lower-range-requirements-for-fingerprints} follows from
  \begin{equation*}
    \frac{|Y|(\dim(P) + 1)}{2} \le \frac{|Y|(\dimF(Y) + 1)}{2} \le |Y + Y| \le 2 \sigma k = m
  \end{equation*}
  where the first inequality uses that $\dim(P) \le \dimF(Y)$ by definition of $\ccY(P)$, the second inequality relies on \Cref{stmt:dim-at-most-twice-doubling} and the third one on \eqref{eq:ub-lower-range-properties-of-Y}.
  Therefore, \eqref{eq:application-of-fingerprints-to-P} is a valid definition for $\cF(P)$.

  We proceed to give an upper bound to the right-hand side of \eqref{eq:bound-on-each-term-of-fingerprint-union-bound}.
  With the goal of first bounding the size of $\cF(P)$, define $\Phi(P) = \max \{|F| : F \in \cF(P)\}$ and note that, trivially,
  \begin{equation*}
    |\cF(P)| \le \sum_{q = 0}^{\Phi(P)} {|P| \choose q} \le (|P| + 1)^{\Phi(P)}.
  \end{equation*}
  Since, by \hyperlink{stmt:fingerprints-repeated}{(\ref*{eq:fingerprint-reqs}) in \nameCref{stmt:fingerprints}~\ref*{stmt:fingerprints}}, $|F| \le C \eps^{-1} \sqrt{m \log m}$ for all $F \in \cF(P)$, we can use that $m = 2\sigma k$, $\eps = k^{-1/20}$ and $\sigma = k^{1/40}$ to obtain
  \begin{equation}\label{eq:bound-on-size-of-fingerprint-collection}
    |\cF(P)| \le \exp(k^{3/5} \, \log |P|) \le \exp(k^{4/5})
  \end{equation}
  where in the first inequality we used that $k$ is sufficiently large, and in the last we used \eqref{eq:ub-lower-range-size-of-P-from-k}.

  To obtain an upper bound on $\pr(F \rsum F \subseteq \complement{A_p})$ for all $F \in \cF(P)$, we use \hyperlink{stmt:fingerprints-repeated}{(\ref*{eq:fingerprint-reqs})}, the lower bound on $|F \rsum F|$ given by \hyperlink{stmt:fingerprints-repeated}{\nameCref{stmt:fingerprints}~\ref*{stmt:fingerprints}}:
  \begin{equation*}
    |F \rsum F| \ge \frac{(1 - \gamma)(\dim(P) + 1)}{2} \min_{Y \in \ccY(P)} |Y|.
  \end{equation*}
  Since $\gamma = \gamma(\delta)$ is a constant and $|Y| \ge (1 - 2 \eps \sigma)k$ for all $Y \in \ccY(P)$ by \eqref{eq:ub-lower-range-properties-of-Y}, we obtain
  \begin{equation}\label{eq:bound-on-fingerprint-prob}
    \max_{F \in \cF(P)} \pr( F \rsum F \subseteq \complement{A_p} ) \le (1 - p)^{(1 - 2 \gamma)(\dim(P) + 1)k/2},
  \end{equation}
  as $k$ is sufficiently large.
  Replacing \eqref{eq:bound-on-size-of-fingerprint-collection} and \eqref{eq:bound-on-fingerprint-prob} back into \eqref{eq:bound-on-each-term-of-fingerprint-union-bound} yields
  \begin{equation}\label{eq:bound-on-each-term-of-union-bound}
    \pr( \exists \, Y \in \ccY(P) : Y \rsum Y \subseteq \complement{A_p} ) \le \exp(k^{4/5}) (1 - p)^{(1 - 2 \gamma)(\dim(P) + 1)k/2}.
  \end{equation}
  This will be our bound for each term in the right-hand side of \eqref{eq:union-bound-over-gaps}.

  Observe that \eqref{eq:bound-on-each-term-of-union-bound} does not depend on the specific choice of $P$, only on its size and dimension.
  Recalling that $|P| \le \exp(k^{1/5}) =: s$ for every $P \in \cP(\Zmodn)$ by \eqref{eq:ub-lower-range-size-of-P-from-k}, we can group terms in the right-hand side of \eqref{eq:union-bound-over-gaps} based on $d = \dim(P)$ to deduce that, by \eqref{eq:bound-on-each-term-of-union-bound}, we have
  \begin{equation}\label{eq:sum-in-ub}
    \pr( \exists \, X \in \cX_1 : X \rsum X \subseteq \complement{A_p} ) \le \sum_{d = 1}^\infty s (n s)^{d + 1} \exp(k^{4/5}) (1 - p)^{(1 - 2 \gamma)(d + 1)k/2},
  \end{equation}
  where we have bounded the number of $d$-dimensional generalised arithmetic progressions in $\Zmodn$ with size at most $s$ by $s (n s)^{d + 1}$.

  It is therefore enough to prove that the right-hand side of \eqref{eq:sum-in-ub} goes to $0$ as $n \to \infty$.
  Note that, as $k = (2 + 4\delta) \log_{\frac{1}{1 - p}}n$, we have $$(1 - p)^{k/2} = n^{-(1 + 2\delta)},$$ which combined with a suitably small choice of $\gamma = \gamma(\delta)$, implies that
  \begin{equation}\label{eq:bound-on-term-that-helps-us}
    n^{d + 1} (1 - p)^{(1 - 2\gamma)(d + 1)k/2} = n^{d + 1 -(1 - 2\gamma)(1 + 2\delta)(d + 1)} \le n^{-\delta (d + 1)}.
  \end{equation}
  The final observation is that it follows from $s = \exp(k^{1/5})$ that there is $1 > \nu > 0$ such that
  \begin{equation}\label{eq:bound-on-term-that-hurts-us}
    s^{d + 2} \exp(k^{4/5}) \le \exp\big( (d + 1) (\log n)^{1 - \nu} \big)
  \end{equation}
  since $k \le 3 (\log n)/p$ for $\delta < 1/4$ and $n$ is sufficiently large.
  The value of $\nu$ depends on the exponent of the $\log n$ term in our choice of $p$, and $\nu = 0.18$ works for $p \ge (\log n)^{-1/80}$.

  Combining \eqref{eq:bound-on-term-that-helps-us} with \eqref{eq:bound-on-term-that-hurts-us}, we show, for sufficiently large $n$, that \eqref{eq:sum-in-ub} is at most
  \begin{equation*}
    \sum_{d = 1}^\infty s (n s)^{d + 1} \exp(k^{4/5}) (1 - p)^{(1 - 2 \gamma)(d + 1)k/2} \le \sum_{d = 1}^\infty n^{-\delta d / 2}
  \end{equation*}
  which goes to 0 as $n \to \infty$, as we wanted to show.
\end{proof}

\subsection{Bounding the probability over choices in $\cX_2$}\label{sec:bound-over-X2}

Our goal is to show that the term corresponding to choices in $\cX_2$ tends to $0$ as $n \to \infty$.
We emphasize that, in this \namecref{sec:bound-over-X2}, no new ideas, or even modification of previous, existing results, are needed.
We just need to use the following result of \citet{Gre05}.

\begin{prop}[\cite{Gre05}, see {\cite[Proposition 6.1]{GM16}}]\label{stmt:greens-estimates-on-sets-with-bounded-doubling}
  For every $k \in \N$ and $m \ge 2k - 1$, there exists $r \in \N$ with
  \begin{equation}\label{eq:bound-on-r}
    r \leq \min\{4 m/k, k\} \quad \text{ and } \quad r \leq \frac{2m}{k} +\frac{1}{k} \binom{r}{2},
  \end{equation}
  such that
  \begin{equation}\label{eq:bound-on-cX2}
    \big|\cX_2^{(m)}\big| \leq n^{r} k^{4k},
  \end{equation}
  where $\cX_2^{(m)} = \{X \in \cX_2 : |X \rsum X| = m\}$.
\end{prop}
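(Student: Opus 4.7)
My plan is to follow the greedy encoding argument of \citet{Gre05} (reformulated as \cite[Proposition 6.1]{GM16}), which I would invoke directly here; below I sketch its structure to make clear what the key steps and the main obstacle are.

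The idea is to bound $|\cX_2^{(m)}|$ by encoding each $X \in \cX_2^{(m)}$ as an ordered tuple $(x_1,\ldots,x_k)$ produced greedily, split into a ``free phase'' of length $r$ and a ``constrained phase'' of length $k-r$. The greedy ordering is built inductively: having chosen $\{x_1,\ldots,x_i\}$ with partial sumset $S_i := \{x_a+x_b : a,b\le i\}$, I would pick $x_{i+1} \in X \setminus \{x_1,\ldots,x_i\}$ maximising the number of $j \le i$ such that $x_{i+1}+x_j \in S_i$. I would declare the free phase to end at the first index $r$ beyond which every subsequent $x \in X$ already satisfies $x+x_j \in S_r$ for ``most'' $j\le r$, so that $x$ is pinned down by $\Omega(r)$ equations in a set of size $|S_r|\le k^2$, leaving at most $k^{O(1)}$ choices per step.

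The constraints on $r$ would then be established by counting the ``new'' sums introduced through step $r$. Before step $r$, the greedy rule fails at each step, which forces many new sums to appear in $X \rsum X$; a straightforward pigeonhole yields $r\le 4m/k$, while a more careful double-counting, separating pairs internal to $\{x_1,\ldots,x_r\}$ (contributing at most $\binom{r}{2}$) from mixed pairs (bounded by $2m$), yields the refined estimate $r \le 2m/k + \binom{r}{2}/k$. The bound $r \le k$ is immediate from $r \le |X|=k$. Once $r$ is controlled in this way, the final count $|\cX_2^{(m)}| \le n^r\cdot k^{O(k)}$ follows from the $n^r$ choices in the free phase times $k^{O(1)}$ choices per step in the constrained phase, and keeping track of the implicit constants gives the exponent $4k$.

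The main obstacle, and the technical heart of the argument, is verifying that after the free phase of length $r$ the greedy rule really does force each subsequent element into a candidate set of size polynomial in $k$. This requires a precise analysis of how many sums are ``new'' versus ``re-used'' at each greedy step, together with the refined double count that yields the $\binom{r}{2}/k$ correction. Since this is exactly what \cite[Proposition 6.1]{GM16} proves, I would invoke that proposition directly rather than re-run the delicate pigeonhole arguments.
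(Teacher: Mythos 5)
Your approach matches the paper exactly: the proposition is imported as a black box from Green's work (reformulated as Proposition~6.1 of Green and Morris), and the paper does not re-prove it but simply invokes it before the union bound over $\cX_2$. Your sketch of the underlying greedy encoding and double-counting argument is a reasonable reconstruction of the cited sources, but since you, like the paper, ultimately invoke the cited proposition directly, nothing further is required.
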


With \Cref{stmt:greens-estimates-on-sets-with-bounded-doubling}, we can take a union bound over each $X \in \cX_2$ to obtain
\begin{equation}\label{eq:prob-of-X2}
    \pr\left(\exists \, X \in \cX_2 : X \rsum X \subseteq \complement{A_p} \right) \le \sum_{m = k^{1 + 1/40}}^{\delta k^2/10} \big|\cX_2^{(m)}\big| \, (1 - p)^m \to 0 \quad \text{ as } n \to \infty,
\end{equation}
and we prove the last step below.

\begin{proof}[Proof of \eqref{eq:prob-of-X2}]
  First, we bound the $r$ in \Cref{stmt:greens-estimates-on-sets-with-bounded-doubling} using \eqref{eq:bound-on-r}:
  \begin{equation*}
    r \le \frac{2 m}{k} + \frac{1}{k} \left( \frac{4m}{k} \right )^2 \le \frac{m}{k} \left(2 + 2 \delta\right)
  \end{equation*}
  since sets $X \in \cX_2^{(m)} \subseteq \cX_2$ satisfy $m/k = \sigma[X] \le \delta k / 10$.
  Now, applying \Cref{stmt:greens-estimates-on-sets-with-bounded-doubling} to each $\cX_2^{(m)}$, we obtain that
  \begin{equation}\label{eq:upper-bound-mid-range-first-step}
    \big|\cX_2^{(m)}\big| \le n^r k^{4k} \le n^{(2 + 2 \delta) m / k + 5 (\log n)^{1/50} \log \log n}
  \end{equation}
  and the last inequality follows from $k \le (\log n)^{51/50}$, which is implied by $p \ge (\log n)^{-1/80}$.

  Notice that it follows from $k = (2 + 4\delta) \log_{\frac{1}{1 - p}} n$ that
  \begin{equation}\label{eq:1-p-in-mid-range-upper-bound}
    (1 - p)^m = n^{-(2 + 4 \delta) m / k}.
  \end{equation}
  Together with \eqref{eq:upper-bound-mid-range-first-step} and the fact that $m/k = \sigma[X] \ge k^{1/40} \ge (\log n)^{1/40}$, \eqref{eq:1-p-in-mid-range-upper-bound} yields
  \begin{equation*}
    \big|\cX_2^{(m)}\big| \, (1 - p)^m \le n^{-\delta m / k}
  \end{equation*}
  from which the result follows by summing over all $k^{1 + 1/40} \le m \le \delta k^2/10$.
\end{proof}

\subsection{Bounding the probability over choices in $\cX_3$}

Similarly to the previous case, we start with a union bound
\begin{equation}\label{eq:upper-range-calc1}
    \pr\left(\exists \, X \in \cX_3 : X \rsum X \subseteq \complement{A_p} \right) \leq \sum_{m = \delta k^2/10}^{k^2} \big|\cX_3^{(m)}\big| \, (1 - p)^m,
\end{equation}
letting $\cX_3^{(m)}$ be the sub-collection of $\cX_3$ consisting of subsets $X \subseteq \Zmodn$ with $|X \rsum X| = m$.
We will need the following slight strengthening of Proposition~5.1 of \cite{GM16}.
\begin{restatable}{prop}{appendixprop}\label{stmt:appendix}
  Let $\delta > 0$ be sufficiently small and let $\eta > 0$.
  If $k \le (\log n)^{2 - \eta}$ and $m \ge \delta k^2/10$, then
  \begin{equation*}
    \big|\cX_3^{(m)}\big| \le n^{(2 + \delta + o(1)) m / k},
  \end{equation*}
  as $k \to \infty$, where $\cX_3^{(m)} = \{X \in \cX_3 : |X \rsum X| = m\}$.
\end{restatable}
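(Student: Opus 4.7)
The plan is to adapt the proof of Proposition~5.1 in \cite{GM16}, which establishes an analogous bound but under the stricter hypothesis that $\sigma[X]$ lies in a narrow window close to the maximum doubling $k/2$. Here we need a uniform bound throughout the range $\sigma[X] \ge \delta k/10$, and we pay for this extra flexibility with the small $+\delta$ slack in the exponent. The overall structure is the same as in \cite{GM16}: one isolates a small ``kernel'' $K \subseteq X$ whose additive structure already determines most of $X$, then shows that the remaining elements of $X$ are confined to a container $C(K) \subseteq \Zmodn$ determined by $K$, so that the count factors as (choice of $K$) $\times$ (choice of extension of $K$ to $X$).

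Concretely, I would first dyadically partition $\cX_3^{(m)}$ according to $\sigma = m/k \in [\delta k/10, k/2]$; the $O(\log k)$ dyadic classes contribute only an $n^{o(m/k)}$ factor because $m/k \ge \delta k/10 \to \infty$ with $n$. Within each class, one applies the GM16 Fourier-analytic extraction to produce a kernel $K \subseteq X$ of size $r = r(\sigma)$ whose spectrum captures the large Fourier coefficients of $1_X$, and then uses a Pl\"unnecke--Ruzsa/isoperimetric argument on $\Z^d$ to obtain a container $C(K) \subseteq \Zmodn$ of size $O(m)$ such that $X \setminus K \subseteq C(K)$. The count inside each dyadic class is then at most $n^r \binom{|C(K)|}{k-r}$, and balancing the two factors yields an exponent of the form $(2 + f(\sigma/k) + o(1))\,m/k$, where $f(1/2) = 0$ and $f$ is continuous (polynomially small in the deficit $1/2 - \sigma/k$). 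The hypothesis $k \le (\log n)^{2-\eta}$ is needed only to absorb subpolynomial error factors such as $k^{O(k)}$: since $k \le (\log n)^{2-\eta}$ gives $\log k = O(\log\log n) \ll \log n$, these factors are $n^{o(m/k)}$ provided $m/k \gg 1$, which is ensured by $m/k \ge \delta k / 10$.

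The main obstacle is verifying that $f(\sigma/k) \le \delta$ throughout the range $\sigma/k \ge \delta/10$. This is a quantitative refinement of the GM16 argument in which one tracks exactly how the kernel extraction and the container construction degrade as $\sigma$ moves away from $k/2$. Both the isoperimetric inequality on $\Z^d$ and the Fourier extraction lose only a polynomially small amount as the deficit grows, so one can choose $\delta$ small enough at the outset so that $f(\sigma/k) \le \delta$ uniformly in the relevant range. Apart from this bookkeeping, the proof is a direct translation of \cite[Proposition~5.1]{GM16} to our slightly more general setting.
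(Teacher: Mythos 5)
Your high-level plan (``adapt \citealp[Proposition 5.1]{GM16}, paying a $\delta$-slack for the wider doubling range'') is correct, and you correctly observe that $k\le(\log n)^{2-\eta}$ is what makes the various $k^{O(k)}$-type error factors into $n^{o(m/k)}$. However, your description of the actual mechanism is wrong in several places. The ``kernel'' $K$ is not obtained by a Fourier-analytic extraction capturing the large spectrum of $1_X$; it is a \emph{maximal $M$-dissociated subset} $D \subseteq X$ chosen combinatorially, with $M = (\log n)/(\log\log n)^4$. The ``container'' for $X\setminus D$ is then just the $M$-span of $D$, whose size is $(4d)^M = n^{o(1)}$ (a consequence of \Cref{stmt:count-of-dissociated-coefficients}), not $O(m)$ as you assert; this alone makes the stated counting bound work, but the claim $|C(K)| = O(m)$ is neither true nor needed. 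The isoperimetric inequality on $\Z^d$ enters not to build the container but to bound $|D|$ itself: one runs a diameter decomposition (\Cref{stmt:weak-regularity-appendix}) on an auxiliary graph, obtains Freiman isomorphisms $\phi_i: X_i \to \Z^d$ via $2\Delta$-bounded decompositions, and then applies \Cref{stmt:isoperimetric-inequality} to each $X_i'$ to deduce $d \le (2+\delta+o(1))m/k$. This is the entire technical content of the argument (\Cref{stmt:bound-on-M-dimension}), and your proposal does not sketch any of it.

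Two further points. First, there is no dyadic partitioning: $m$ is fixed in the statement, so $\sigma = m/k$ is a single number. Second, the claim that the exponent takes the form $(2+f(\sigma/k)+o(1))m/k$ with $f$ polynomially small in the deficit $1/2-\sigma/k$ is not what the paper proves and is not needed: the $\delta$-loss comes from taking $\gamma = \delta^2$ in the isoperimetric inequality (together with the negligible $|X_*| = o(k)$ loss), and it is uniform over the whole range $\sigma \ge \delta k/10$ rather than shrinking as $\sigma\to k/2$. So while your proposal correctly points at the GM16 template, the key lemma bounding the size of a maximal dissociated subset via the isoperimetric inequality — which is where all the work lies — is missing, and the surrounding details you do give are largely inaccurate.
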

The proof of \Cref{stmt:appendix} is essentially the same as that in \cite{GM16}; we only optimize constants and exponents.
Therefore, we defer its presentation to \Cref{sec:appendix}.

\begin{proof}[Proof that \eqref{eq:upper-range-calc1} $\to 0$ as $n \to \infty$]
  Observe that $$k \le \frac{3 \log n}{p} \le (\log n)^{41/40} \le (\log n)^{2 - \eta},$$ by our choice of $p \ge (\log n)^{-1/80}$.
  Hence, we can apply \Cref{stmt:appendix} to bound, for every $m \ge \delta k^2/10$, the size of $\cX_3^{(m)}$ by
  \begin{equation*}
    \big|\cX_3^{(m)}\big| \leq n^{(2 + \delta + o(1))m/k} \le n^{(2 + 2 \delta) m / k}
  \end{equation*}
  where the last inequality holds for large $k$.
  We can bound each term of \eqref{eq:upper-range-calc1} by:
  \begin{equation*}
    \big|\cX_3^{(m)}\big| \, (1 - p)^m  \le n^{(2 + 2 \delta) m / k} n^{-(2 - 4\delta) m / k} \le n^{- \delta m / k}.
  \end{equation*}
  Summing over $m \ge \delta k^2/10$ yields the desired result.
\end{proof}

\section{The lower bound}\label{sec:lower-bound}
In this \namecref{sec:lower-bound}, we prove the lower bound in \Cref{stmt:main-result}.
\begin{thm}\label{stmt:lb}
  Let $n$ be a prime number and let $p = p(n)$ satisfy $1/2 \ge p \ge n^{-o(1)}$.
  The random Cayley sum graph $\Gamma_p$ of $\Zmodn$ satisfies
  \begin{equation*}
    \alpha(\Gamma_p) \geq \big(2 + o(1)\big) \log_{\frac{1}{1 - p}} n
  \end{equation*}
  with high probability as $n \to \infty$.
\end{thm}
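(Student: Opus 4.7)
The plan is to apply the second moment method to a suitable count of independent $k$-sets, in close analogy with the standard argument for $G(n, p)$~\cite{Bol85, JLR00}. Fix a small $\delta > 0$ and set $k = \lfloor (2 - \delta) \log_{\frac{1}{1 - p}} n \rfloor$; it suffices to show that $\alpha(\Gamma_p) \ge k$ with high probability for every fixed $\delta > 0$, since letting $\delta = \delta(n) \to 0$ sufficiently slowly then yields the stated bound. The hypothesis $p \ge n^{-o(1)}$ forces $k \le n^{o(1)}$, so $k^4 = o(n)$; this puts us in the regime where a uniformly random $X \in \binom{\Zmodn}{k}$ has $|X \rsum X| = (1 - o(1))\binom{k}{2}$ with probability $1 - o(1)$, since the expected number of additive coincidences $x_1 + x_2 = x_3 + x_4$ with $\{x_1, x_2\} \neq \{x_3, x_4\}$ both in $X$ is $\Theta(k^4 / n) = o(1)$.

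For the first moment, let $N = \big|\{X \in \binom{\Zmodn}{k} : X \rsum X \subseteq \complement{A_p}\}\big|$. Restricting the sum to the $(1 - o(1)) \binom{n}{k}$ ``nearly Sidon'' sets with $|X \rsum X| \ge (1 - o(1))\binom{k}{2}$ gives
\begin{equation*}
  \E[N] \ge (1 - o(1)) \binom{n}{k} (1 - p)^{\binom{k}{2}} \ge n^{\Omega(\delta k)} \to \infty.
\end{equation*}
For the second moment, write
\begin{equation*}
  \E[N^2] = \sum_{X, Y \in \binom{\Zmodn}{k}} (1 - p)^{|(X \rsum X) \cup (Y \rsum Y)|}
\end{equation*}
and organise the sum by $j = |X \cap Y|$. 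Decomposing $X = A \sqcup C$ and $Y = B \sqcup C$ with $|C| = j$, a ``generic'' pair $(X, Y)$ should contribute $|(X \rsum X) \cap (Y \rsum Y)| = (1 + o(1))\binom{j}{2}$ — only the diagonal sums inside $C$ matter — and summing over $j$ reproduces the familiar $G(n, p)$ computation to yield $(1 + o(1))\E[N]^2$. Paley--Zygmund then gives $N > 0$ \whp.

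The main obstacle is controlling the non-generic pairs for which $|(X \rsum X) \cap (Y \rsum Y)|$ exceeds $\binom{j}{2}$ by a non-negligible amount: these arise from extra additive quadruples $x_1 + x_2 = y_1 + y_2$ with $x_1, x_2 \in A$ and $y_1, y_2 \in B$ (or mixed configurations), and from algebraically structured pairs such as translates $Y = X + c$. The expected number of additional coincidences between random $A$ and $B$ is again governed by $k^4 / n$, which is $o(1)$ by the hypothesis $p \ge n^{-o(1)}$, so the non-generic contribution is negligible in expectation. The few highly structured pairs — generated for instance by a shared arithmetic progression or by a single translation — are so rare that a direct counting argument (the short ``simple combinatorial argument'' promised in the introduction) suffices to dispose of them. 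Combining these estimates gives $\E[N^2] \le (1 + o(1))\E[N]^2$, which by Paley--Zygmund completes the proof.
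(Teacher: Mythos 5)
Your overall plan — second moment method restricted to (near-)Sidon sets, Paley--Zygmund / Chebyshev to conclude — matches the paper's strategy, and you correctly identify that controlling $|(X\rsum X)\cap(Y\rsum Y)|$ for structured pairs is the crux. However, your resolution of that crux has a genuine gap. You propose to parameterize the second-moment sum by $j = |X\cap Y|$ and argue that the non-generic contribution is negligible ``in expectation'' because extra additive quadruples number $O(k^4/n) = o(1)$. This does not suffice: in $\E[N^2]$ the contribution of a pair $(X,Y)$ is weighted by $(1-p)^{-|(X\rsum X)\cap(Y\rsum Y)|}$, which is exponentially large in the sumset overlap, so a vanishingly small fraction of pairs with many extra coincidences can dominate the sum. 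One needs a tail bound that couples the \emph{count} of such pairs to the \emph{size} of their sumset overlap — an expectation bound on the number of coincidences gives no such coupling, and you never supply the ``direct counting argument'' you invoke for the structured pairs.

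The paper resolves this by abandoning $j = |X\cap Y|$ as the organizing parameter and instead parameterizing by $s = k - t$, where $t$ is the minimum size of a set $Y^* \subseteq Y$ with $(X\rsum X)\cap(Y^*\rsum Y^*) = \emptyset$ that is maximal with this property. Two facts then do the work simultaneously: (i) $|(X\rsum X)\cap(Y\rsum Y)| \le ks/2$, because $Y^*$ is a maximal independent set in the Cayley sum graph $\Gamma_{X\rsum X}[Y]$, whose number of edges equals the sumset overlap (Lemma~\ref{stmt:size-of-X+X-cap-Y+Y}, via Observation~\ref{stmt:edges-in-graph-with-large-maximal-independent-sets}); and (ii) by maximality of $Y^*$, each element of $Y\setminus Y^*$ lies in $X\rsum X - Y^*$, a set of size at most $k^3$, so there are at most $k^{3s}\binom{n}{k-s}$ choices of $Y$ with parameter $s$. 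This yields the clean variance bound of Lemma~\ref{stmt:var}, $\Var(Z_k) \le \E[Z_k] + \binom{n}{k}\sum_{s\ge 1} k^{3s}\binom{n}{k-s}(1-p)^{2\binom{k}{2}-ks/2}$, which decays provided $k^4 = o(n^\delta)$. Without something like this coupling between count and overlap, the $j$-based decomposition cannot be closed, and your argument is incomplete at exactly the step you flagged as the ``main obstacle.''
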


The proof of this result is significantly easier than the upper bound.
In fact, using only the pseudorandom properties of $\Gamma_p$ is enough to obtain $\alpha(\Gamma_p) \ge \big(1/2 + o(1)\big)(\log n)/p$ (see \cite{Alo07} and \cite[Corollary 2.2]{AKS99}).
To improve the leading constant to $2$, we use both the randomness (as opposed to only the pseudorandomness) and the fact that we can restrict our attention to any sub-collection of potential independent sets in $\Gamma_p$.

More precisely, for each $k \in \N$, we define $Z_k$ to be the random variable counting all independent $k$-sets in $\Gamma_p$ with maximal doubling, that is
\begin{equation}\label{eq:def-Z_k}
  Z_k = \left|\left\{X \in \cZ_k: X \rsum X\subset A_p^c\right\}\right|
\end{equation}
where
\begin{equation}\label{eq:def-cZ_k}
  \cZ_k = \left\{X \subseteq \Zmodn : |X| = k, \, |X \rsum X| = \binom{k}{2}\right\}.
\end{equation}
If $Z_k > 0$, then $\alpha(\Gamma_p) \geq k$ regardless of the potential independent $k$-sets that $Z_k$ overlooks.

In order to prove the lower bound, it is enough to show that $\Var(Z_k) = o(\E[Z_k]^2)$ since
\begin{equation*}
  \pr(\alpha(\Gamma_p) \geq k) \ge \pr(Z_k > 0) \ge 1 - \frac{\Var(Z_k)}{\E[Z_k]^2},
\end{equation*}
by Chebyshev's inequality.
The first step is to estimate $\E[Z_k]$, which we do by showing that $\cZ_k$ is large and using linearity of expectation.
\begin{lem}\label{stmt:size-of-Zk}
  For each $k \in \N$ such that $k = o(n^{1/4})$, we have
  \begin{equation*}
    |\cZ_k| = \big(1 - o(1)\big) \binom{n}{k}.
  \end{equation*}
\end{lem}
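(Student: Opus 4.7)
The plan is a straightforward first-moment/union-bound argument: show that the complement $\binom{\Zmodn}{k} \setminus \cZ_k$ is a negligible fraction of all $k$-subsets of $\Zmodn$.

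First I would characterise the bad sets. A set $X \in \binom{\Zmodn}{k}$ fails to lie in $\cZ_k$ precisely when $|X \rsum X| < \binom{k}{2}$, which happens iff there are two distinct pairs $\{a,b\}, \{c,d\} \subseteq X$ with $a + b = c + d$. If these pairs shared a common element (say $a = c$), cancellation in the group $\Zmodn$ would force $b = d$, contradicting that the pairs are distinct. Hence every bad $X$ contains four distinct elements $a,b,c,d \in \Zmodn$ with $a + b = c + d$; call such a $4$-tuple an \emph{additive quadruple}.

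Next I would count additive quadruples. Given any ordered triple of distinct elements $(a,b,c) \in \Zmodn^3$, the value $d = a + b - c$ is determined, so the number of ordered additive quadruples is at most $n(n-1)(n-2) \le n^3$. Each unordered additive quadruple arises from at least a fixed positive number of ordered ones, so the number of unordered additive quadruples is still $O(n^3)$. Taking a union bound over all additive quadruples $Q$, the number of $k$-subsets of $\Zmodn$ containing a fixed $Q$ is exactly $\binom{n - 4}{k - 4}$, and therefore
\begin{equation*}
  \left| \binom{\Zmodn}{k} \setminus \cZ_k \right| \;\le\; O(n^3) \binom{n - 4}{k - 4}.
\end{equation*}

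Finally, I would compare this to $\binom{n}{k}$ using the identity $\binom{n-4}{k-4}/\binom{n}{k} = k(k-1)(k-2)(k-3)/[n(n-1)(n-2)(n-3)] = O(k^4 / n^4)$, which gives
\begin{equation*}
  \frac{\big|\binom{\Zmodn}{k} \setminus \cZ_k\big|}{\binom{n}{k}} \;=\; O\!\left(\frac{k^4}{n}\right) \;=\; o(1),
\end{equation*}
since $k = o(n^{1/4})$. Combined with the trivial upper bound $|\cZ_k| \le \binom{n}{k}$, this yields the claim. There is no real obstacle in this argument; the only mildly non-trivial observation is the use of cancellation in $\Zmodn$ to rule out pairs sharing an element, but this is automatic in any abelian group and does not even rely on $n$ being prime.
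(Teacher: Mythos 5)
Your proof is correct and follows essentially the same approach as the paper: both characterise bad sets by the existence of an additive quadruple of four distinct elements, count such quadruples as $O(n^3)$, and compare $\binom{n-4}{k-4}$ with $\binom{n}{k}$ (the paper phrases this last step as a union bound over $Q$ of $\pr(Q \subseteq Y)$ for a uniformly random $k$-set $Y$, which is the same computation). Your argument for why the four elements must be pairwise distinct — cancellation in $\Zmodn$ — is spelled out a bit more explicitly than in the paper, but the substance is identical.
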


\begin{proof}
  We will (equivalently) show that almost all $X \subseteq \Zmodn$ with $|X| = k$ satisfy ${|X \rsum X| = \binom{k}{2}}$.
  Observe that if $|X \rsum X| < \binom{k}{2}$, then there are distinct $x_1, x_2, x_1', x_2' \in X$ such that
  \begin{equation*}
    x_1 + x_2 = x_1' + x_2'.
  \end{equation*}
  Motivated by that observation, define
  \begin{equation*}
    \cQ = \big\{\{x_1, x_2, x_1', x_2'\} \subseteq \Zmodn : x_1 + x_2 = x_1' + x_2', \, \text{ and } x_1, x_2, x_1', x_2' \text{ are distinct} \big\},
  \end{equation*}
  and let $Y$ be a uniformly random $k$-set in $\Zmodn$.
  Taking a union bound over $\cQ$ yields
  \begin{equation*}
    \pr(Y \not \in \cZ_k) \le \sum_{Q \in \cQ} \pr(Q \subseteq Y) \le n^3 \bigg(\frac{k}{n}\bigg)^4 = \frac{k^4}{n},
  \end{equation*}
  where the second inequality is due to the choices of $x_1, x_2$ and $x_1'$ determining $x_2' = x_1 + x_2 - x_1'$ for $\{x_1, x_2, x_1', x_2'\} \in \cQ$.
  Since $k = o(n^{1/4})$, the \namecref{stmt:size-of-Zk} follows.
\end{proof}

The second \namecref{stmt:var} that we need to prove \Cref{stmt:lb} gives a bound on $\Var(Z_k)$.
\begin{lem}\label{stmt:var}
  For every $k \in \N$ and $p = p(n) \in (0, 1)$, we have
  \begin{equation*}
    \Var(Z_k) \le \E[Z_k] + \binom{n}{k} \sum_{s = 1}^k k^{3 s} \binom{n}{k - s} (1 - p)^{2 \binom{k}{2} - k s/2}.
  \end{equation*}
\end{lem}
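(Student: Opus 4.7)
The plan is a second-moment expansion stratified by intersection size. Write $Z_k = \sum_{X \in \cZ_k} I_X$ with $I_X = \mathbbm{1}[X \rsum X \subseteq \complement{A_p}]$, so that
\[
\Var(Z_k) \;=\; \sum_{X \in \cZ_k} \Var(I_X) \;+\; \sum_{\substack{X \neq Y \\ X, Y \in \cZ_k}} \big(\pr(I_X = I_Y = 1) - \pr(I_X=1)\pr(I_Y=1)\big).
\]
The diagonal sum is bounded via $\Var(I_X) \le \E[I_X]$ by $\E[Z_k]$, yielding the first term of the claimed bound.

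For the off-diagonal part, I would use $\pr(I_X = I_Y = 1) - \pr(I_X)\pr(I_Y) \le \pr(I_X = I_Y = 1) = (1-p)^{|(X \rsum X) \cup (Y \rsum Y)|}$ and partition by $s = |X \cap Y|$. The number of ordered pairs $(X, Y) \in \cZ_k^2$ with $|X \cap Y| = s$ is at most $\binom{n}{k}\binom{k}{s}\binom{n - k}{k - s} \le \binom{n}{k} k^s \binom{n}{k-s}$. Because both $X$ and $Y$ have maximal doubling, each element of $(X \rsum X) \cap (Y \rsum Y)$ comes from a unique matched pair in $\binom{X}{2} \times \binom{Y}{2}$; the $\binom{s}{2}$ pairs that lie inside $X \cap Y$ already force at least $\binom{s}{2}$ elements in this intersection, which, using $\binom{s}{2} \le ks/2$, gives the exponent $2\binom{k}{2} - ks/2$ as a clean if slightly lossy form. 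The extra $k^{2s}$ slack (completing the $k^{3s}$ factor) is there to absorb the additional non-forced additive coincidences $x_1 + x_2 = y_1 + y_2$ with $\{x_1, x_2\} \cap \{y_1, y_2\} = \emptyset$; each such coincidence imposes a linear constraint on the remaining free elements of $Y$, so the cumulative penalty grows only polynomially in $k$ per coincidence.

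The case $s = 0$ lies outside the stated sum and needs a separate short argument: for disjoint $X, Y$, the covariance vanishes unless a nontrivial additive coincidence occurs between $X \rsum X$ and $Y \rsum Y$, and the contribution of such pairs can be bounded crudely and absorbed into the $s = 1$ term after reindexing. The main obstacle is the bookkeeping around additive coincidences, since $|(X \rsum X) \cap (Y \rsum Y)|$ can strictly exceed $\binom{s}{2}$ on individual pairs (so the per-pair exponent bound fails pointwise); I would address this via an aggregated exponential-moment expansion
\[
\sum_{Y}(1-p)^{-|(X \rsum X)\cap (Y \rsum Y)|} \;=\; \sum_Y \prod_{u \in X \rsum X}\!\Big(1 + \big((1-p)^{-1}-1\big)\, \mathbbm{1}[u \in Y \rsum Y]\Big),
\]
bounding the result term-by-term by counting the $Y$'s realizing each specified subset of common sums, and verifying that the $k^{3s}$ counting factor accommodates the resulting geometric series over coincidence counts.
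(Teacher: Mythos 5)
Your proposal diverges from the paper's proof at the crucial step, and the divergence creates a genuine gap that your sketch does not close. The paper does not stratify by $s = |X \cap Y|$. Instead, for each $Y \sim X$ it considers the Cayley sum graph $\Gamma_{X\rsum X}[Y]$ on vertex set $Y$, picks a maximal independent set $Y^*$ of minimum size, and sets $s = |Y\setminus Y^*|$. With this choice, $s$ is tailored to control exactly the two quantities you need: \emph{(i)} every maximal independent set in $\Gamma_{X\rsum X}[Y]$ has size at least $k - s$, so by a simple graph lemma (\Cref{stmt:edges-in-graph-with-large-maximal-independent-sets}) the graph has at most $ks/2$ edges, which is precisely $|(X\rsum X)\cap(Y\rsum Y)|$ because $Y\in\cZ_k$; and \emph{(ii)} by maximality of $Y^*$, every $y\in Y\setminus Y^*$ satisfies $(y + Y^*)\cap(X\rsum X)\neq\emptyset$, so $Y\setminus Y^*\subseteq (X\rsum X)-Y^*$, a set of size at most $k^3$, giving the $k^{3s}$ count cleanly. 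Also, $s\ge 1$ for all $Y\sim X$ since $s=0$ would make $Y$ itself independent in $\Gamma_{X\rsum X}[Y]$, contradicting $(X\rsum X)\cap(Y\rsum Y)\neq\emptyset$.

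Your parametrization by $|X\cap Y|$ does not give either of these. As you observe yourself, $|(X\rsum X)\cap(Y\rsum Y)|$ can exceed $\binom{s}{2}$ (the direction of your inequality is the wrong one: you need an \emph{upper} bound on the intersection, but $\binom{s}{2}$ is a \emph{lower} bound), so the per-pair estimate $(1-p)^{|(X\rsum X)\cup(Y\rsum Y)|}\le(1-p)^{2\binom{k}{2}-ks/2}$ is simply false. The claim that the extra $k^{2s}$ ``absorbs the additional non-forced additive coincidences,'' and the proposed exponential-moment expansion, are not carried out, and it is not clear they can be: a single disjoint pair $X,Y$ can share many sums, so the $s=0$ stratum alone already breaks the form of the stated bound. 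The actual proof sidesteps all of this by choosing the parameter $s$ so that both the count of $Y$'s and the exponent come out right simultaneously; your version would require a separate, nontrivial argument to repair, and it is not the same proof.
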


The main step in the proof of \Cref{stmt:var} is to show, for every $X \in \cZ_k$, that
\begin{equation}\label{eq:goal-for-var-proof}
  \sum_{\substack{Y \in \cZ_k \\ Y \sim X}} (1 - p)^{|(X \rsum X) \cup (Y \rsum Y)|} \le \sum_{s = 1}^k k^{3 s} \binom{n}{k - s} (1 - p)^{2 \binom{k}{2} - k s/2}
\end{equation}
where $Y \sim X$ if
\begin{equation}\label{eq:definition-of-sim}
  Y \neq X \qquad \text{ and } \qquad (X \rsum X) \cap (Y \rsum Y) \neq \emptyset.
\end{equation}
In order to do that, define
\begin{equation*}
  \cI(X, Y) = \big\{Y' \subseteq Y : (X \rsum X) \cap (Y' \rsum Y') = \emptyset\big\},
\end{equation*}
and
\begin{equation}\label{eq:definition-of-cal-I}
  \cI^*(X, Y) = \big\{Y' \in \cI(X, Y) : (y + Y') \cap (X \rsum X) \neq \emptyset \, \text{ for all } \, y \in Y \setminus Y'\big\}.
\end{equation}

The definition of $\cI^*(X, Y)$ is motivated by the following \namecref{stmt:size-of-X+X-cap-Y+Y}, which gives an upper bound on $\big| (X \rsum X) \cap (Y \rsum Y) \big|$:

\begin{lem}\label{stmt:size-of-X+X-cap-Y+Y}
  Let $k \in \N$ and $X, Y \in \cZ_k$.
  If $Y \sim X$, then
  \begin{equation}\label{eq:size-of-X+X-cap-Y+Y}
    \big|(X \rsum X) \cap (Y \rsum Y)\big| \le \frac{k (k - t)}{2},
  \end{equation}
  where $t = \min\big\{|Y'|: Y' \in \cI^*(X, Y)\big\}.$
\end{lem}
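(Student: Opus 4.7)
The plan is to translate the problem into graph theory and then apply a short degree-sum argument. Let $G$ be the graph on vertex set $Y$ whose edges are the pairs $\{y_1,y_2\}\subseteq Y$ with $y_1+y_2\in X\rsum X$. Since $Y\in\cZ_k$ means $|Y\rsum Y|=\binom{k}{2}$, the map $\{y_1,y_2\}\mapsto y_1+y_2$ from $\binom{Y}{2}$ to $Y\rsum Y$ is a bijection; consequently each element of $(X\rsum X)\cap(Y\rsum Y)$ corresponds to exactly one edge of $G$, giving
\[
|E(G)|=\big|(X\rsum X)\cap(Y\rsum Y)\big|.
\]

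The next step is to interpret $\cI$ and $\cI^*$ in this language. A set $Y'\subseteq Y$ lies in $\cI(X,Y)$ iff no pair inside $Y'$ has sum in $X\rsum X$, i.e.\ iff $Y'$ is an independent set of $G$. The maximality condition $(y+Y')\cap(X\rsum X)\neq\emptyset$ for every $y\in Y\setminus Y'$ is precisely the statement that every $y\notin Y'$ has a $G$-neighbour in $Y'$, so $\cI^*(X,Y)$ is exactly the family of maximal independent sets of $G$. Hence $t$ is the independent domination number $i(G)$, the minimum size of a maximal independent set.

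The main estimate is then a one-line degree bound. For each $y\in Y$, greedily extend the independent set $\{y\}$ to a maximal independent set $M_y\ni y$ of $G$; by the definition of $t$ we have $|M_y|\ge t$. Since $M_y$ is independent, every neighbour of $y$ in $G$ must lie in $Y\setminus M_y$, so
\[
\deg_G(y)\le |Y|-|M_y|\le k-t.
\]
Summing over $y\in Y$ yields $2|E(G)|=\sum_{y\in Y}\deg_G(y)\le k(k-t)$, and combined with the identification from the first paragraph this is the claimed inequality. There is no serious obstacle here: the point is that the pointwise bound $\deg_G(y)\le k-t$ (rather than the trivial $k-1$) is exactly what strengthens the naive estimate $\binom{k}{2}-\binom{t}{2}$ to the sharper $k(k-t)/2$, and it follows immediately from the fact that every vertex of $G$ can be completed to a maximal independent set of size at least $t$.
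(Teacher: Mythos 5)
Your proof is correct and follows essentially the same route as the paper: you identify $(X \rsum X) \cap (Y \rsum Y)$ with the edges of $\Gamma_{X \rsum X}[Y]$, recognize $\cI^*(X,Y)$ as the family of maximal independent sets, and bound the edge count via $\deg(y) \le k - t$ (the paper phrases this as $\Delta(G) \le k - t$ in \Cref{stmt:edges-in-graph-with-large-maximal-independent-sets}, but it is the same degree argument).
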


To prove \Cref{stmt:size-of-X+X-cap-Y+Y}, we will need the following simple observation about graphs.

\begin{obs}\label{stmt:edges-in-graph-with-large-maximal-independent-sets}
  Let $G$ be a graph with $k$ vertices.
  If all maximal independent sets of $G$ have at least $t$ vertices, then $G$ has at most $k (k - t) / 2$ edges.
\end{obs}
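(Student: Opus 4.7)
The plan is to show that the degree hypothesis follows by a purely local argument: every vertex must lie in some maximal independent set, and the size bound on maximal independent sets then forces a uniform upper bound on the degree of every vertex.

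First, I would observe that any single vertex $v \in V(G)$ forms an independent set $\{v\}$, which can be greedily extended to a maximal independent set $I_v$ with $v \in I_v$. By hypothesis, $|I_v| \geq t$. Since $I_v$ is independent and contains $v$, every other element of $I_v$ must be a non-neighbour of $v$, so
\begin{equation*}
  I_v \subseteq \{v\} \cup \big(V(G) \setminus (N(v) \cup \{v\})\big),
\end{equation*}
which gives $|I_v| \leq k - \deg(v)$. Combining this with $|I_v| \geq t$ yields $\deg(v) \leq k - t$ for every $v \in V(G)$.

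The conclusion then follows immediately by the handshake lemma:
\begin{equation*}
  |E(G)| = \frac{1}{2} \sum_{v \in V(G)} \deg(v) \leq \frac{k(k - t)}{2}.
\end{equation*}

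There is no real obstacle here; the only subtlety is the (standard) remark that every vertex is contained in some maximal independent set, which is what lets us apply the hypothesis vertex-by-vertex rather than only to a single independent set.
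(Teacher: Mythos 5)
Your proof is correct and takes essentially the same approach as the paper: both bound the degree of a vertex by extending $\{v\}$ to a maximal independent set and using the size hypothesis. The paper applies this only to a vertex of maximum degree and concludes via $e(G) \le k\Delta(G)/2$, while you apply it to every vertex and use the handshake lemma, but these are the same argument.
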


\begin{proof}
  Take $v \in V(G)$ to be a vertex of maximum degree, and $I_v \subseteq V(G)$ to be a maximal independent set containing $v$.
  Then,
  \begin{equation*}
    t \le |I_v| \le k - d(v) = k - \Delta(G).
  \end{equation*}
  Thus, $\Delta(G) \le k - t$, and the claimed bound follows.
\end{proof}

To deduce \Cref{stmt:size-of-X+X-cap-Y+Y}, we will apply \Cref{stmt:edges-in-graph-with-large-maximal-independent-sets} to $\Gamma_{X \rsum X}[Y]$, the Cayley sum graph over $X \rsum X$ restricted to the vertex set $Y$.

\begin{proof}[Proof of \Cref{stmt:size-of-X+X-cap-Y+Y}]
  We claim that
  \begin{equation}\label{eq:intersection-equal-number-of-edges}
    \big|(X \rsum X) \cap (Y \rsum Y)\big| = e(\Gamma_{X \rsum X}[Y]),
  \end{equation}
  which not only implies that the collection $\cI^*(X, Y)$ is exactly the collection of maximal independent sets in $\Gamma_{X \rsum X}[Y]$, but also reduces the proof to applying \Cref{stmt:edges-in-graph-with-large-maximal-independent-sets} with $t$ to $\Gamma_{X \rsum X}[Y]$.
  Therefore, we first establish \eqref{eq:intersection-equal-number-of-edges}, and then the characterization of $\cI^*(X, Y)$.

  To show \eqref{eq:intersection-equal-number-of-edges}, recall that, by the definition of the Cayley sum graph, $y_1 y_2 \in \binom{\,Y}{2}$ is an edge of $\Gamma_{X \rsum X}[Y]$ if and only if $y_1 + y_2 \in X \rsum X$.
  We obtain equality by observing that each sum in $Y \rsum Y$ corresponds to exactly one pair $y_1 y_2 \in \binom{\,Y}{2}$, since $|Y \rsum Y| = \binom{k}{2}$ by $Y \in \cZ_k$.

  It follows from \eqref{eq:intersection-equal-number-of-edges} that $\cI(X, Y)$ corresponds to independent sets in $\Gamma_{X \rsum X}[Y]$.
  Moreover, for every $Y^* \in \cI^*(X, Y)$, we know that there are no $Y' \in \cI(X, Y)$ such that $Y^* \subsetneq Y'$ by the condition in \eqref{eq:definition-of-cal-I}.
  Our definition of $t$ then corresponds to all maximal independent sets in $\Gamma_{X \rsum X}[Y]$ having at least $t$ vertices, so we can apply \Cref{stmt:edges-in-graph-with-large-maximal-independent-sets} as desired.
\end{proof}

With \Cref{stmt:size-of-X+X-cap-Y+Y} in hand, the proof of \eqref{eq:goal-for-var-proof} now relies on an efficient count of $Y \in \cZ_k$ with $Y \sim X$, for each fixed $X \in \cZ_k$.
Notice that $\cI^*(X, Y)$ is not empty, as every graph has at least one maximal independent set, so we can fix a set $Y^* \in \cI^*(X, Y)$ of minimum size.
Our counting strategy is to consider the choices for elements in $Y^*$ and then the choices for $Y \setminus Y^*$; in fact, a trivial count of all sets $Y^* \subseteq \Zmodn$ suffices, so we only need to count the possible elements in $Y \setminus Y^*$ efficiently.

In order to bound the choices for $Y \setminus Y^*$, notice that for every $y \in Y \setminus Y^*$, we have $(y + Y^*) \cap (X \rsum X) \neq \emptyset$ by \eqref{eq:definition-of-cal-I}.
It follows that there are $y^* \in Y^*$ and distinct $x_1, x_2 \in X$ such that $y + y^* = x_1 + x_2$, or, equivalently,
\begin{equation*}
  Y \setminus Y^* \subseteq X \rsum X - Y^*.
\end{equation*}
We can therefore choose the elements of $Y \setminus Y^*$ from a set of size at most
\begin{equation*}
  |X \rsum X - Y^*| \le |X|^2 |Y^*| \le k^3,
\end{equation*}
so there are at most $k^{3s}$ choices for $Y \setminus Y^*$ if $s = |Y \setminus Y^*|$.
Bounding the number of choices for $Y^*$ by $\binom{n}{k - s}$ yields the bound in \eqref{eq:goal-for-var-proof}, except for the $(1 - p)^{2 \binom{k}{2} - k s / 2}$ term, which we obtain by using \Cref{stmt:size-of-X+X-cap-Y+Y}.

We now have all the ingredients to prove \Cref{stmt:var}.

\begin{proof}[Proof of \Cref{stmt:var}]
  Observe that, via standard calculations, we have that
  \begin{equation}\label{eq:var-bound}
    \Var(Z_k) \le \E[Z_k] + \sum_{X \in \cZ_k} \sum_{\substack{Y \in \cZ_k \\ Y \sim X}} (1 - p)^{|(X \rsum X) \cup (Y \rsum Y)|},
  \end{equation}
  where, recall, $Y \sim X$ was defined in \eqref{eq:definition-of-sim}.
  We therefore need to show that
  \begin{equation}\label{eq:var-inner-term-upper-bound}
    \sum_{\substack{Y \in \cZ_k \\ Y \sim X}} (1 - p)^{|(X \rsum X) \cup (Y \rsum Y)|} \le \sum_{s = 1}^k k^{3 s} \binom{n}{k - s} (1 - p)^{2 \binom{k}{2} - k s/2}
  \end{equation}
  for each $X \in \cZ_k$.

  To prove \eqref{eq:var-inner-term-upper-bound}, fix $X \in \cZ_k$ and, for each $Y \in \cZ_k$ with $Y \sim X$, choose a set $Y^* = Y^*(X, Y)$ of minimum size.
  If we group the sets $Y$ by the size of their corresponding $Y^*$, we can count them by first enumerating the choices for $Y^*$ and then the choices for $Y \setminus Y^*$.
  For fixed $s = |Y \setminus Y^*| = k - |Y^*|$, there are at most $\binom{n}{k - s}$ choices for $Y^*$, and there are at most $k^{3 s}$ choices for $Y \setminus Y^*$ since $Y \setminus Y^* \subseteq X \rsum X - Y^*$.

  We then bound the size of the union in \eqref{eq:var-inner-term-upper-bound} by applying \Cref{stmt:size-of-X+X-cap-Y+Y} to the pair $(X, Y)$
  \begin{equation*}
    \big|(X \rsum X) \cap (Y \rsum Y)\big| \le \frac{k s}{2}
  \end{equation*}
  which means that we can ignore the case $s = 0$ because it would contradict $Y \sim X$.
  A trivial inclusion-exclusion now yields the bound we need for the size of the union:
  \begin{equation}\label{eq:cup-of-sumsets}
    \big|(X \rsum X) \cup (Y \rsum Y)\big| = 2 \binom{k}{2} - \big|(X \rsum X) \cap (Y \rsum Y)\big| \ge 2 \binom{k}{2} - \frac{k s}{2}.
  \end{equation}

  Replacing \eqref{eq:cup-of-sumsets} and the above count of $Y$ for fixed $s$ into the left-hand side of \eqref{eq:var-inner-term-upper-bound} gives
  \begin{equation*}
    \sum_{X \in \cZ_k} \sum_{\substack{Y \in \cZ_k \\ Y \sim X}} (1 - p)^{|(X \rsum X) \cup (Y \rsum Y)|} \le \sum_{X \in \cZ_k} \sum_{s = 1}^k k^{3 s} \binom{n}{k - s} (1 - p)^{2\binom{k}{2} - k s/2}.
  \end{equation*}
  Trivially bounding the number of choices for $X \in \cZ_k$ by $\binom{n}{k}$ and plugging the result into \eqref{eq:var-bound} completes the proof.
\end{proof}

With \Cref{stmt:size-of-Zk} and \Cref{stmt:var}, the proof of \Cref{stmt:lb} is just checking that the bounds match those of the statement.

\begin{proof}[Proof of \Cref{stmt:lb}]
  Fix $p = p(n)$ satisfying $1/2 \ge p \ge n^{-\delta/8}$ for some $\delta > 0$, and let $k = (2 - 2\delta) \log_{\frac{1}{1 - p}} n$.
  It suffices to show that
  \begin{equation}\label{eq:goal-for-lower-bound}
    \frac{\Var(Z_k)}{\E[Z_k]^2} \to 0 \qquad \text{ as } \qquad n \to \infty.
  \end{equation}

  First, we compute the expected value of $Z_k$ using \Cref{stmt:size-of-Zk} and linearity of expectation:
  \begin{equation}\label{eq:expec}
    \E[Z_k] = \big(1 - o(1)\big) \binom{n}{k} (1 - p)^{\binom{k}{2}} \ge \frac{1}{2} \binom{n}{k} n^{-(1 - \delta) (k - 1)} \to \infty
  \end{equation}
  as $n \to \infty$, by our choice of $k$.
  Now, if we assume that
  \begin{equation}\label{eq:expectation-dominates-variance-term}
    \E[Z_k] \ge \binom{n}{k} \sum_{s = 1}^k k^{3 s} \binom{n}{k - s} (1 - p)^{2\binom{k}{2} - k s/2},
  \end{equation}
  then, by \Cref{stmt:var}, we have $\Var(Z_k) \le 2 \E[Z_k]$ and
  \begin{equation*}
    \frac{\Var(Z_k)}{\E[Z_k]^2} \le \frac{2}{\E[Z_k]} \to 0 \qquad \text{ as } \qquad n \to \infty,
  \end{equation*}
  by \eqref{eq:expec}.
  We therefore assume that the converse of \eqref{eq:expectation-dominates-variance-term} holds.

  Before we proceed, observe that applying the standard binomial inequality
  \begin{equation*}
    \binom{n}{k - s} \le \left(\frac{k}{n}\right)^s \binom{n}{k}
  \end{equation*}
  to the right-hand side of \eqref{eq:expectation-dominates-variance-term} yields
  \begin{align}
    \binom{n}{k} \sum_{s = 1}^k k^{3 s} \binom{n}{k - s} (1 - p)^{2\binom{k}{2} - k s/2} & \le \binom{n}{k}^2 (1 - p)^{2 \binom{k}{2}} \sum_{s = 1}^k k^{3 s} \left(\frac{k}{n}\right)^s (1 - p)^{-k s/2} \notag \\
                                                                                               & \le 4 \E[Z_k]^2 \sum_{s = 1}^k \left(\frac{k^4}{n^\delta}\right)^s, \label{eq:bound-for-var-term-in-terms-of-expec}
  \end{align}
  where in the last inequality we used \eqref{eq:expec} with $n$ sufficiently large and also
  \begin{equation*}
    (1 - p)^{- k s / 2} = n^{(1 - \delta) s}
  \end{equation*}
  because $k = (2 - 2\delta) \log_{\frac{1}{1 - p}} n$.

  By \Cref{stmt:var}, our assumption that the converse of \eqref{eq:expectation-dominates-variance-term} holds, and \eqref{eq:bound-for-var-term-in-terms-of-expec}, we have
  \begin{equation}
    \Var(Z_k) \le 8 \E[Z_k]^2 \sum_{s = 1}^k \left(\frac{k^4}{n^\delta}\right)^s. \label{eq:bound-for-variance-when-expectation-is-dominated}
  \end{equation}
  Replacing \eqref{eq:bound-for-variance-when-expectation-is-dominated} into \eqref{eq:goal-for-lower-bound}, we conclude that the proof is complete if we show that
  \begin{equation*}
    \sum_{s = 1}^k \left(\frac{k^4}{n^\delta}\right)^s \to 0 \qquad \text{ as } \qquad n \to \infty.
  \end{equation*}
  This is easily seen to be true when $k = o(n^{\delta/4})$, which holds for our choice of $p \ge n^{-\delta/8}$.
\end{proof}

\section{Concluding remarks}\label{sec:concluding-remarks}

The most important open question left by our work is extending the upper bound in \Cref{stmt:main-result} to $p$ as small as possible.
However, already when $p \le (\log n)^{-1}$, there is an obstacle that prevents any approach similar to ours from working.
To understand the barrier, consider the following approximate summary of our strategy.
We find a family $\cS = \{F(X) + F(X) : X \in {\Zmodn \choose k}\}$ of subsets of $\Z_n$ of size $s$ (for some $s \in \N$) with the following two properties:
\begin{enumerate}
  \item For each set $X \in {\Zmodn \choose k}$, there exists $S \in \cS$ with $S \subseteq X + X$.
  \item The family is small, that is, $\left|\cS\right| \leq (1 - p)^{-s}$.
\end{enumerate}
Observe that each $S \in \cS$ is of the form $F(X) + F(X)$ and has size $s$, so we must trivially have $|F(X)| \ge \sqrt{s}$ for all $X \in {\Zmodn \choose k}$.
Naively counting every set $F \subseteq \Zmodn$ with $|F| = \sqrt{s}$ when bounding the size of $\cS$, we obtain
\begin{equation}\label{eq:lower-bound-on-size-of-S-from-all-sumsets}
  |\cS| \ge \binom{n}{\sqrt{s}} \approx \exp(\sqrt{s} \log n).
\end{equation}
In our proof, we show that we can choose $F$ inside a small generalised arithmetic progression $P$, thus replacing the $\log n$ term in \eqref{eq:lower-bound-on-size-of-S-from-all-sumsets} with $\log |P|$.
However, even if we could find such a set $P$ with $|P| = O(\sqrt{s})$, we could not improve the bound in \eqref{eq:lower-bound-on-size-of-S-from-all-sumsets} beyond $\exp(\sqrt{s})$.

Combining this lower bound on the size of $\cS$ with the upper bound that we require in property (2) gives
\begin{equation*}
  \exp(\sqrt{s}) \le |\cS| \le (1 - p)^{-s},
\end{equation*}
which implies that $s \ge p^{-2}$.
Now, consider any $X \in {\Zmodn \choose k}$ with $\sigma[X] = O(1)$: the corresponding set $S \in \cS$ satisfies $S \subseteq X + X$ and therefore
\begin{equation*}
  p^{-2} \le s = |S| \le |X + X| \le O(k).
\end{equation*}
As $k$ is the upper bound we are trying to prove for $\alpha(\Gamma_p)$, it follows that the best we can hope for this approach is, for some constant $C > 2$,
\begin{equation}\label{eq:best-possible-upper-bound-using-fingerprints}
  \alpha(\Gamma_p) \le C \max\{p^{-2}, p^{-1}\log n\},
\end{equation}
where the second term in the maximum is the lower bound that we proved in \Cref{sec:lower-bound}.

While \eqref{eq:best-possible-upper-bound-using-fingerprints} would still be far from what we believe is true for $p$ much smaller than $(\log n)^{-1}$, proving it would still be very interesting; even if we optimized our approach, we could not get close to proving \eqref{eq:best-possible-upper-bound-using-fingerprints} for all $p \ge (\log n)^{-1}$.

Improving the upper bound is related to the following problem due to Shachar Lovett: is there a list of $2^{n^{O(1)}}$ subsets of $\F_2^n$ with density at least $1/100$ such that, whenever $X \subseteq \mathbb{F}_2^n$ has density at least $1/3$, $X + X$ contains one of these sets?
This question is relevant because we can embed sets $X \subseteq \Zmodn$ with $\sigma[X] = O(1)$ as dense subsets of an Abelian group using Ruzsa's model lemma.
Now, if the positive answer to this problem does not rely too much on the details of the setting (the structure of $\mathbb{F}_2^n$ and the particular densities of the subsets) and a similar list exists for any Abelian group, then we could take a union bound over its elements and hope to circumvent \eqref{eq:best-possible-upper-bound-using-fingerprints} in the bounded doubling setting.

We would also like to highlight that for very small $p$, close to $n^{-1} \log n$, upper and lower bounds for $\alpha(\Gamma_p)$ follow from a theorem of \citet{Bou87}; this is the only other result that we are aware of in the regime $p = o(1)$.
Another way to obtain an upper bound in this range is as follows: observe that the first eigenvalue of $\Gamma_p$ is highly concentrated around $n p$, and that we can use elementary Fourier analysis to show that its second eigenvalue is, with high probability, at most $O( \sqrt{n p \log n} )$ (see, for example, \cite[Sections~3.1~and~3.2]{Alo07}).
Using Hoffman's ratio bound, we then obtain
\begin{equation*}
  \alpha(\Gamma_p) \le \bigg( \frac{C n \log n}{p} \bigg)^{1/2},
\end{equation*}
which matches $\alpha(G(n, p))$ up to a logarithmic factor if $p \approx n^{-1} \log n$.

Another interesting open question is to determine the minimum number of translates necessary to obtain the correct leading constant of $d + 1$ in \Freiman's lemma.
This is closely related to the question asked (and partially settled) by \citet*{BLT22+a} on whether three translates suffice to obtain the Cauchy--Davenport lower bound for two sets $A, B \subseteq \Zmodn$.
Even though their question was recently resolved by \citet*{FLPZ24} in the affirmative, it is not clear what is the truth in higher dimensions, even in the simpler case of $A = B$.
We conjecture the following:

\begin{conjecture}\label{stmt:optimal-freimans-lemma-via-few-trans}
  There exists $C > 0$ such that the following holds.
  For all $d \in \N$ and all finite sets $X \subseteq \R^d$ of full rank, $X$ contains a subset $T$ such that $|T| \le C d$ and
  \begin{equation}\label{eq:conj-optimal-freimans-lemma-via-few-trans}
    |X + T| \ge (d + 1)|X| - d^C.
  \end{equation}
\end{conjecture}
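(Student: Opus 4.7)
The plan is to proceed by strong induction on the rank $d$, combining the vertex-peeling structure of the standard proof of \Freiman's lemma with the few-translates technology developed in this paper. The base case $d \le D_0$, for an absolute constant $D_0$, follows from \Cref{stmt:jing-mudgal-freiman}: its constant $C(D_0)$ depends only on $D_0$ and can therefore be absorbed into the $d^C$ error term of \eqref{eq:conj-optimal-freimans-lemma-via-few-trans}.

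For the inductive step, given $X \subseteq \R^d$ of full rank, I would pick a vertex $v$ of $\conv(X)$ together with a supporting hyperplane $\cH$ provided by \Cref{obs:facet-intersection}, so that $F := X \cap \cH$ has rank $d - 1$ inside $\cH$. Applying the inductive hypothesis to $F$ (after a \Freiman isomorphism casting it into $\R^{d-1}$) yields $T_F \subseteq F$ with $|T_F| \le C(d-1)$ and $|F + T_F| \ge d|F| - (d-1)^C$. The candidate set of translates would be $T := T_F \cup \{v\}$, and the lower bound for $|X + T|$ would follow from the disjointness of three contributions: $F + T_F$ sits in an affine hyperplane parallel to $\cH$, $X' + v$ sits in a shifted hyperplane (where $X' := X \setminus \{v\}$), and $\{2v\}$ contributes one additional point, mimicking the three-term inclusion-exclusion from the classical proof of \Cref{stmt:freimans-lemma}.

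The main obstacle is keeping $|T|$ linear in $d$ when $|F|$ is much smaller than $|X|$. A single vertex peel only recovers a contribution of order $|F|$ towards the target $(d+1)|X|$, so naively iterating one vertex at a time would force $|T| = \Theta(|X|)$. To circumvent this, I expect one must prove a dichotomy: after $O(d)$ vertex peels, either (a) the cumulative sumset contributions already exceed $(d+1)|X| - d^C$, in which case we are done, or (b) the remaining facet is a constant fraction of $|X|$, in which case a single inductive application suffices. Establishing (b) quantitatively is the crux: it would demand a "flatness vs. fatness" structural statement about $\conv(X)$, analogous in spirit to \Cref{stmt:greedy-phase} but sensitive to the full leading constant $(d+1)$ rather than $(d+1)/2$.

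An alternative route would run \Cref{stmt:freiman-lemma-via-few-translates} twice with opposite orientations so as to harvest both the $y_+$ and $y_-$ halves in the sense of \Cref{stmt:phase-2}, and then merge the two outputs via an inclusion-exclusion in the style of \Cref{stmt:gluing-phase-2-and-phase-3}; this would, if successful, recover the missing factor of two in \Cref{stmt:freiman-lemma-via-few-translates} without doubling $|T|$, provided one can align the pairs $(y_+, y_-)$ across the iterations so that each adds only one dimension to $\Span(T)$. Any such strategy must, however, avoid invoking the Balog--Szemer\'edi--Gowers machinery underlying \Cref{stmt:jing-mudgal-freiman}, since it is precisely the tower-type dependencies there that \Cref{stmt:optimal-freimans-lemma-via-few-trans} aims to eliminate.
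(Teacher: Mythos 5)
This statement is a \emph{conjecture} in the paper, not a theorem: the paper poses it as an open problem, supports it with the construction $X = P + \{0, e_1, \ldots, e_{d-1}\}$ giving a lower bound $|T| \gtrsim 2d$, and explicitly remarks that the closest positive results (\Cref{stmt:freiman-lemma-via-few-translates}, \Cref{stmt:jing-mudgal-freiman}, and the PFR-based variant mentioned in \Cref{sec:concluding-remarks}) each fall short in either the leading constant or the dependence of $|T|$ on $d$. So there is no proof in the paper to compare against, and any complete argument here would be a new result.

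Your sketch correctly identifies why a direct vertex-peeling induction cannot close the gap, but it does not close it either, and you say as much: the ``flatness vs.\ fatness'' dichotomy in step (b) is asserted, not proved, and it is precisely the missing ingredient. Concretely, the inductive step as you set it up gives
\begin{equation*}
  |X + (T_F \cup \{v\})| \ge |F + T_F| + |X' + v| + 1 \ge d|F| - (d-1)^C + |X|,
\end{equation*}
which is only of the order $(d+1)|X| - d^{O(1)}$ when $|F| \ge |X| - d^{O(1)}$. But the paper's own example shows a full-rank $X$ for which every facet of $\conv(X)$ meets $X$ in only $O(|X|/d)$ points, so after $O(d)$ peels you have not necessarily reduced to a constant-fraction facet; the alternative branch (a), that the accumulated contributions already suffice, is not established by the pigeonhole argument you gesture at and would need a new supersaturation-type input. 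The alternative route of running \Cref{stmt:freiman-lemma-via-few-translates} with opposite orientations has the same unresolved issue: without a mechanism forcing the two halves to share a coordinate system, merging via \Cref{stmt:gluing-phase-2-and-phase-3} doubles $|T|$ rather than the constant. In short, the proposal is a reasonable research plan, but it contains a genuine gap at exactly the point the paper itself identifies as open, and should not be read as a proof of \Cref{stmt:optimal-freimans-lemma-via-few-trans}.
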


The best negative result we have for this problem is given by the following simple construction.
Let $d \ge 3$, and let $\{e_1, e_2, \ldots, e_d\}$ be the canonical basis of $\R^d$.
Consider
\begin{equation*}
  X = P + \{0, e_1, \ldots, e_{d - 1}\} \qquad \text{ where } \qquad P = \{0, e_d, 2e_d, \ldots, ke_d\}
\end{equation*}
for some $k \in \N$.
It is not hard to show that the following holds for this example: for every $\gamma > 0$, there is $c = c(\gamma) > 0$ such that if $T \subseteq X$ with $|T| < (2 - \gamma)d$, then
\begin{equation*}
  |X + T| \le (1 - c) (d + 1)|X|.
\end{equation*}

\Cref{stmt:freiman-lemma-via-few-translates} is actually not the best result that we know towards \Cref{stmt:optimal-freimans-lemma-via-few-trans}.
While we were finishing the writing of this paper, \citet{GGMT23+} made a breakthrough and proved Marton's conjecture, also known as the Polynomial \Freiman--Ruzsa (PFR) conjecture for finite fields.
In a previous work, \citet[Corollary 1.12]{GMT25} proved that a positive answer to Marton's conjecture would imply what is known as the ``weak'' Polynomial \Freiman--Ruzsa conjecture for $\Z^d$: for every $X \subseteq \Z^d$ with $\sigma[X] \le \sigma$, there exists $X' \subseteq X$ such that
\begin{equation*}
  \rank(X') \le C \log \sigma \qquad \text{ and } \qquad |X'| \ge \sigma^{-C} |X|
\end{equation*}
for some absolute constant $C > 0$.

Combining this weak version of PFR in $\Z^d$ with a variation of the argument we developed in \Cref{sec:weighted-freiman}, we can prove a version of \Cref{stmt:freiman-lemma-via-few-translates} that attains a bound of the form $|X + T| \ge (d + 1 - O(\log d))|X|$ at the cost of $d^C$ translates, for $C > 0$ an absolute constant.
We develop these ideas in a separate work, dedicated to answering a question of \citet{GM16} about the number of subsets of $\{1, \ldots, n\}$ with prescribed doubling, where we require that both the number of translates is a polynomial in $d$, and the leading constant is at least $1 - o(1)$.
Nonetheless, we think that \Cref{stmt:optimal-freimans-lemma-via-few-trans} should not depend on results as deep as variants of PFR.

The final future research direction we would like to highlight is the extension of our result to other groups.
During the preparation of this paper, it came to our knowledge that \citet*{CFPY23++} proved an upper bound of $O(\log N \log \log N)$ for the clique/independence number in the uniform random Cayley graph of any group $G$, where $N$ is the order of $G$.
As there are some groups, like $\F_2^n$, where this is tight up to the constant factor, their result can be seen as a generalisation of \citeauthor{Gre05}'s theorem~\cite{Gre05} about $\alpha(\Gamma_{1/2})$.

Moreover, over certain groups, like $\F_5^n$, they can show that there exist Cayley graphs which have both clique and independence number $(2 + o(1)) \log N$, even though a uniform random Cayley graph has clique number $\Theta( \log N \log \log N)$.
It would be interesting if our methods, combined with their techniques, can extend these results to sparser graphs.

\section*{Acknowledgements}

We would like to greatly thank Rob Morris for carefully reading the paper, and for the many improvements and corrections he suggested.
Without his support, this work would not have its current form.
We would also like to thank Zach Hunter and an anonymous referee for carefully reading the paper, pointing out mistakes and giving suggestions.
Finally, we are grateful to Lucas Arag\~ao for helpful discussions, and to David Conlon, Huy Tuan Pham and Shachar Lovett for comments on the manuscript.

This study was financed in part by the Coordenação de Aperfeiçoamento de Pessoal de Nível Superior, Brasil (CAPES).

\pagebreak
\bibliographystyle{abbrvnat}

\def\bibfont{\footnotesize}
\bibliography{bib}

\appendix

\section{Chang's theorem for $\Zmodn$}\label{sec:changs-theorem-for-Zn}

This \namecref{sec:changs-theorem-for-Zn} is devoted to the proof of \Cref{stmt:green-ruzsa-with-dim(P)<=dimF(X)}, the variant of \citeauthor{Cha02}'s theorem that we used in the proof of \Cref{stmt:main-result}.
\changgreenruzsa*

We rely on two theorems to prove \Cref{stmt:green-ruzsa-with-dim(P)<=dimF(X)}: \citeauthor{CS13}'s strengthening of the Green--Ruzsa theorem (\Freiman's theorem for general Abelian groups)~\cite{CS13} and the discrete John's theorem of \citet{TV08}.
To state these two results, we need some auxiliary definitions.
As in \cite{CS13}, define a coset progression to be a set of the form $P + H$, where $H$ is a subgroup of $G$ and $P$ is a generalised arithmetic progression.
Moreover, the dimension of $P + H$ is $\dim(P)$, we say that a coset progression is proper if (a) $P$ is proper, and (b) $|P + H| = |P| \, |H|$.
We say that $P + H$ is $2$-proper if $(P + P) + H$ is proper.

\begin{thm}[{\citet[Theorem 4]{CS13}}]\label{stmt:green-ruzsa-linear-bound-dim(P)}
  There exists $C' > 0$ such that the following holds.
  Let $G$ be an Abelian group, and let $X \subseteq G$ be a finite set with $\sigma[X] \le \kappa$.
  Either there exists a proper coset progression $P + H$ such that
  \begin{equation*}
    X \subseteq P + H, \quad \dim(P + H) \le 2 \kappa + 1 \quad \text{ and } \quad |P + H| \le \exp(C' \kappa^4 (\log (\kappa + 2))^2) |X|,
  \end{equation*}
  or $X$ is fully contained in at most $C' \kappa^3 (\log \kappa)^2$ cosets, whose total cardinality is bounded by $\exp(C' \kappa^4 (\log (\kappa + 2))^2)|X|$, of some subgroup of $G$.
\end{thm}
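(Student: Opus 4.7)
This theorem is cited as Chang and Sisask's Theorem~4, so the proposal below is a speculative route rather than a reconstruction of their argument. The approach I would follow is the Green--Ruzsa framework for extending \Freiman's theorem to general abelian groups, refined so as to obtain the \emph{linear}-in-$\kappa$ dimension bound; the dichotomy in the conclusion appears naturally as the obstruction to forcing that optimal bound in the presence of torsion.

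The starting point is the Ruzsa calculus: by Pl\"unnecke--Ruzsa, $|mX - nX| \le \kappa^{m+n}|X|$ for every $m,n \ge 0$, and in particular $|2X - 2X| \le \kappa^4 |X|$. Combined with Ruzsa's covering lemma, this reduces the theorem to producing a proper coset progression of dimension $\le 2\kappa + 1$ and density $\exp(-O(\kappa^4(\log\kappa)^2))$ inside $2X - 2X$, and then covering $X$ by $\exp(\mathrm{poly}(\kappa))$ of its translates. The existence of \emph{some} coset progression inside $2X - 2X$ of density $\exp(-(\log\kappa)^{O(1)})$, though not yet with the claimed dimension bound, follows from the quantitative Bogolyubov--Ruzsa lemma (in the form established by Sanders), which is the natural starting point for any Green--Ruzsa-type proof.

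The crucial step, and the main obstacle, is reducing the dimension of this coset progression from $(\log\kappa)^{O(1)}$ down to $2\kappa + 1$. My plan here is to exploit \Freiman's lemma (\Cref{stmt:freimans-lemma}) as follows. Suppose that $X$ admits a \Freiman model $\phi : X \to \Z^D$ of full rank. Applying \Freiman's lemma to $\phi(X)$ gives $\sigma[\phi(X)] \ge (D+1) - D(D+1)/(2|X|)$, and since $\sigma[\phi(X)] = \sigma[X] \le \kappa$, this forces $D \le 2\kappa - 1$ (assuming $|X|$ is not too small, which we may handle by absorbing small sets into the second alternative of the dichotomy). One then transports the resulting ``integer'' coset progression back into $G$ via $\phi^{-1}$, gluing it to the subgroup $H$ produced in the Bogolyubov--Ruzsa step; the extra two units of dimension (giving $2\kappa + 1$ rather than $2\kappa - 1$) absorb the translation dimensions incurred by the Ruzsa-covering and gluing stages.

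The catch, and the source of the dichotomy, is that a full-rank \Freiman model in $\Z^D$ need not exist in general abelian groups: torsion in $X - X$ can prevent any such embedding. When \Freiman rectification fails in this sense, a counting argument shows that many differences $x - x'$ must lie in some proper torsion subgroup $H' \le G$, which forces $X$ into at most $C'\kappa^3(\log\kappa)^2$ cosets of $H'$ and yields the second alternative. The delicate bookkeeping is to ensure that the same $\exp(C'\kappa^4(\log\kappa)^2)$ bound controls both the volume of the coset progression in the first case and the total cardinality of the cosets in the second case; this requires carefully tracking the losses in Ruzsa covering, in the Bogolyubov--Ruzsa step, and in the threshold used to decide when ``too much torsion'' triggers the dichotomy.
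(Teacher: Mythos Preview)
The paper does not prove this theorem; it is quoted verbatim from \cite{CS13} and used as a black box in the proof of \Cref{stmt:green-ruzsa-with-dim(P)<=dimF(X)}. There is therefore no proof in the paper to compare your proposal against, and you correctly flag this yourself.

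That said, your sketch is a faithful outline of the Chang--Sisask strategy: Bogolyubov--Ruzsa to get an initial coset progression, then \Freiman's lemma applied to a \Freiman model in $\Z^D$ to force the dimension down to roughly $2\kappa$, with the torsion obstruction to rectification producing the second alternative of the dichotomy. One point worth sharpening: the quantitative input that drives the $\kappa^4(\log\kappa)^2$ exponent is not Sanders' Bogolyubov--Ruzsa bound but rather Chang's earlier spectral argument (large-spectrum dissociativity), which is what Chang--Sisask actually use; Sanders' bounds would give a different, and in some respects better, dependence. Also, the passage from ``rectification fails'' to ``$X$ lies in $O(\kappa^3(\log\kappa)^2)$ cosets'' is not just a counting argument but goes through the same Chang-type spectral structure, and making that step quantitative with the claimed constants is where most of the work in \cite{CS13} lies. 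Your sketch identifies the right skeleton but underplays this part.
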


As our group of interest is $\Zmodn$, a proper coset progression $P + H$ is either all of $\Zmodn$, or simply a proper generalised arithmetic progression $P + 0 = P$.
We also need a \namecref{stmt:2-proper-from-proper} by the same authors to obtain, from a $d$-dimensional coset progression $P + H$, a $2$-proper coset progression that contains $P + H$, has dimension at most $d$, and whose size is not much larger than $|P + H|$.

\begin{lem}[{\citet[Lemma 6]{CS13}}]\label{stmt:2-proper-from-proper}
  There exists $C' > 0$ such that the following holds.
  Suppose that $P + H$ is a $d$-dimensional coset progression in an Abelian group $G$.
  Then there exists a $2$-proper coset progression $P' + H'$ such that
  \begin{equation*}
    P + H \subseteq P' + H', \qquad \dim(P' + H) \le d, \qquad \text{ and } \qquad |P' + H'| \le d^{C' d^2} |P + H|.
  \end{equation*}
\end{lem}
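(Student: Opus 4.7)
The plan is to translate the properness conditions into a lattice problem and then apply a discrete John's theorem. Write $P = a_0 + \phi(B)$, where $B = \prod_{i=1}^d [0,\ell_i) \subseteq \Z^d$ is a box and $\phi : \Z^d \to G$ is the homomorphism $\phi(w) = \sum_i w_i a_i$ determined by the generators of $P$. Composing with the quotient map, set $\bar\phi : \Z^d \to G/H$ and $\Lambda = \ker \bar\phi \subseteq \Z^d$. The condition that $P + H$ is $2$-proper is then equivalent to the restriction $\bar\phi\restriction_{4B - 4B}$ being injective, i.e.\ $(4B - 4B) \cap \Lambda = \{0\}$, together with the matching cardinality identity $|(P'+P') + H'| = |P' + P'|\cdot |H'|$. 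So it suffices to exhibit a new basis $v_1,\ldots,v_d$ of $\Z^d$ and side-lengths $\ell_1',\ldots,\ell_d'$ giving a box $B'$ with $B \subseteq B'$, $(4B' - 4B') \cap \Lambda = \{0\}$, and $|B'| \le d^{C'd^2}|B|$; then $P' := a_0 + \phi(B')$ together with $H' := H$ would complete the proof.

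The main technical input I would use is a discrete John's theorem in the style of Tao--Vu (see for instance Proposition~3.40 of their \emph{Additive Combinatorics}): given a centrally symmetric convex body $K \subseteq \R^d$ and a sublattice $\Lambda \subseteq \Z^d$, there is a symmetric generalised arithmetic progression $Q$ with $K \cap \Z^d \subseteq Q \subseteq d^{O(d)}\cdot K$ and such that $Q$ interacts with $\Lambda$ only at the origin. I would apply this to a slight inflation of $4B$ combined with a basis of $\Z^d$ adapted to the successive minima of $\Lambda$ (Minkowski's second theorem): the short independent vectors of $\Lambda$ give a diagonalising basis in which the "forbidden directions" are aligned with the coordinate axes, and in the complementary directions $4B$ can be dilated freely to absorb $B$.

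The principal obstacle is quantitative. A single application of John's theorem approximates a symmetric convex body by a parallelepiped within a factor of $d$, but packaging the lattice constraint $(4B' - 4B') \cap \Lambda = \{0\}$ and then transferring the resulting parallelepiped back into a progression inside $\Z^d$ costs an extra $d^{O(d)}$ per coordinate. Iterating across the $d$ chosen basis directions therefore accumulates to the claimed $d^{O(d^2)}$ blow-up in $|P' + H'|/|P + H|$. A further delicate point arises when $\Lambda$ contains very short non-zero vectors: along such a direction $4B$ cannot be enlarged at all without colliding with $\Lambda$, and one must instead absorb that direction into an enlarged subgroup $H' \supseteq H$. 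This reduces the effective dimension of the progression component (keeping $\dim(P' + H') \le d$) and is what allows $H'$ to differ from $H$ in the conclusion. Handling cleanly this dichotomy between "free" directions (where one dilates $B$) and "short" directions (where one enlarges $H$) is, in my view, the hardest part of the argument.
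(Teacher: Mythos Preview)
The paper does not prove this lemma: it is quoted verbatim as Lemma~6 of Cwalina--Schoen~\cite{CS13} and used as a black box in the proof of \Cref{stmt:green-ruzsa-with-dim(P)<=dimF(X)}. There is therefore no ``paper's own proof'' to compare your proposal against.

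That said, your outline is a faithful sketch of the standard argument (which is essentially what Cwalina--Schoen, and before them Green--Ruzsa and Tao--Vu, carry out). The reformulation via $\Lambda = \ker\bar\phi$ and the $2$-properness condition $(4B - 4B)\cap\Lambda = \{0\}$ is correct, and the dichotomy you identify --- dilating in directions where $\Lambda$ is far away, and absorbing short lattice vectors into an enlarged $H'$ --- is exactly the mechanism that makes $H'$ potentially larger than $H$ while keeping the dimension bounded by $d$. Your accounting for the $d^{O(d^2)}$ loss is also in the right ballpark: each change of basis and rounding step costs $d^{O(d)}$, and one pays this across at most $d$ directions. If you wanted to turn this into a full proof you would need to be more careful about the order of operations (first reduce the basis of $\Z^d$ relative to $\Lambda$, then apply the discrete John's theorem once rather than iterating), but the skeleton is sound.
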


These two results enable us to give an overview of the (somewhat standard) proof of \Cref{stmt:green-ruzsa-with-dim(P)<=dimF(X)}.
First, we apply \Cref{stmt:green-ruzsa-linear-bound-dim(P)} to $X$.
Using \eqref{eq:assumptions-in-green-ruzsa} and \Cref{stmt:2-proper-from-proper}, we will show that this application yields a small $2$-proper generalised arithmetic progression $P \subseteq \Zmodn$ such that $X \subseteq P$.
However, the bound on $d = \dim(P)$ is not strong enough for our purposes, so we use it instead to define a \Freiman isomorphism $\phi : P \to P_d \subseteq \Z^d$ in the natural way.

Restricting $P_d$ to an $r = \dimF(X)$ dimensional subspace, then, yields an $r$-dimensional \emph{convex} progression, which is a set of the form $\cC = K \cap \Z^d$.
In this definition, $K \subseteq \R^d$ is a convex set of rank $r \le d$.
At this point, we use \citeauthor{TV08}'s discrete John's theorem~\cite{TV08} (\Cref{stmt:discrete-john} below) to obtain a small generalised arithmetic progression $P' \subseteq \Z^d$ such that $\cC \subseteq P'$ and $\dim(P') \le r$.
We complete the proof by taking the progression $\phi^{-1}(P') \subseteq \Zmodn$.

The above sketch is not quite right for two reasons.
The first is that $\phi^{-1}$ may not be defined for every element in $P'$, but the linearity of the mapping allows us to extend it to all of $\Z^d$.
The second reason is that the statement of \Cref{stmt:discrete-john} requires the convex progression to be (centrally) symmetric, \ie $\cC = -\cC$.
We amend this by applying it instead to $\cC - \cC$, bounding its size by $|P - P| \le 2^d |P|$, and its dimension by that of the same $d'$-dimensional subspace that contains $\cC$.

\begin{thm}[{\citet[Theorem 1.6, Lemma 3.3]{TV08}}]\label{stmt:discrete-john}
  There exists $C' > 0$ such that the following holds.
  Let $d, r \in \N$ with $r \le d$.
  For any $r$-dimensional symmetric convex progression $\cC \subseteq \Z^d$, there exists a generalised arithmetic progression $P \subseteq \Z^d$ such that
  \begin{equation*}
    \cC \subseteq P, \qquad |P| \le r^{C' r^2} |\cC| \qquad \text{ and } \qquad \dim(P) \le r = \dim(\cC).
  \end{equation*}
\end{thm}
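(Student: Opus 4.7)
The plan is to combine two classical ingredients from the geometry of numbers: John's theorem, which approximates any symmetric convex body by an inscribed ellipsoid, and Minkowski's second theorem, which controls the successive minima of a lattice with respect to a convex body. Write $\cC = K \cap \Z^d$ for a symmetric convex body $K \subseteq \R^d$ of affine rank $r$, let $V = \Span(\cC) \subseteq \R^d$ be the real span, and let $\Lambda = \Z^d \cap V$ be the induced rank-$r$ lattice containing $\cC$. Replacing $K$ by $K \cap V$, we may work inside $V \cong \R^r$ and apply John's theorem to find a symmetric ellipsoid $E \subseteq V$ with $E \subseteq K \subseteq \sqrt{r}\cdot E$.

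Next, I would compute the successive minima $\lambda_1 \le \cdots \le \lambda_r$ of $\Lambda$ with respect to $E$, and invoke a standard strengthening of Minkowski's second theorem (due to Mahler) to obtain a basis $u_1, \ldots, u_r$ of $\Lambda$ with $\|u_i\|_E \le i \cdot \lambda_i$, losing at most a factor of $r!$ relative to the successive minima themselves. The candidate progression is then
\begin{equation*}
  P = \{a_1 u_1 + \cdots + a_r u_r : a_i \in \Z,\ |a_i| \le N_i\}, \qquad N_i = \lceil C_r / \lambda_i \rceil,
\end{equation*}
for an appropriate $C_r = r^{O(1)}$, and $\dim(P) \le r$ is immediate by construction. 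To check that $\cC \subseteq P$, note that any $x \in K \cap \Lambda$ lies in $\sqrt{r}\cdot E$; writing $x = \sum a_i u_i$ uniquely, the integer coefficients $a_i$ can be recovered via the Gram--Schmidt dual of $(u_i)$ in the inner product defined by $E$, and this recovery procedure bounds $|a_i|$ in terms of $\|x\|_E$ and the $\lambda_i$, yielding $|a_i| \le N_i$ for $C_r$ chosen large enough.

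For the size bound, I would estimate
\begin{equation*}
  |P| = \prod_{i = 1}^r (2 N_i + 1) \lesssim r^{O(r)} \prod_{i = 1}^r \lambda_i^{-1} \asymp r^{O(r)} \frac{\mathrm{vol}(E)}{\det(\Lambda)},
\end{equation*}
where the last relation is Minkowski's second theorem. A standard covering argument comparing $|E \cap \Lambda|$ to $\mathrm{vol}(E)/\det(\Lambda)$ then yields $|P| \le r^{C' r^2} |\cC|$, matching the claim. The degenerate case when some $\lambda_i > 1$ (so $E$ itself contains few lattice points in those directions) requires a minor adjustment: those indices contribute a factor of $O(r)$ per dimension instead of $\lambda_i^{-1}$, but the argument is otherwise unchanged.

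The main obstacle is controlling the quantitative loss at each step: the exponent $C' r^2$ arises because each of the $r$ directions accumulates a separate loss of order $r^{O(r)}$. Specifically, we lose a factor of $\sqrt{r}$ from John's theorem, up to $r!$ from passing from the vectors realising the successive minima to a genuine lattice basis, and additional polynomial factors when converting the geometric bound $\|x\|_E \le \sqrt{r}$ into the coordinate-wise bound $|a_i| \le N_i$ via the dual basis. Ensuring that the final constant $C'$ is absolute (independent of both $d$ and $r$), and that $\dim(P) \le r$ is preserved exactly (rather than merely up to bounded additive slack), are the most delicate points, but both can be verified by careful bookkeeping once the adapted basis from Mahler's lemma is in hand.
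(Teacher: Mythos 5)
This theorem is cited in the paper from Tao and Vu (2008); the only argument the paper itself supplies is the reduction from rank $r < d$ to the full-rank lattice case, carried out by passing to the rank-$r$ lattice $\cH \cap \Z^d$ inside the $r$-dimensional subspace $\cH$ spanned by $\cC$, applying a linear isomorphism onto $\R^r$, and then invoking Tao--Vu's lattice-based formulation as a black box. Your first paragraph performs precisely this reduction (your $V$ and $\Lambda$ are the paper's $\cH$ and $\cH \cap \Z^d$), and the remainder re-derives the full-rank statement from first principles via John's theorem, Minkowski's second theorem and Mahler's refinement --- which is essentially the route Tao and Vu themselves take. So the plan is sound and consistent with the source, just at a lower level of abstraction than the paper chose.

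Two steps you leave in outline are where the content of the lemma actually lives, and I would press on both. First, deducing $|a_i| \le N_i$ from the Mahler bounds $\|u_i\|_E \le i\lambda_i$ is not formal: it requires an upper bound on the dual norms $\|u_i^*\|_{E^*}$ (equivalently a lower bound on the Gram--Schmidt lengths $\|u_i^\perp\|_E$), which one extracts from the Gram determinant together with Minkowski II and the upper bounds on the other $\|u_j\|_E$. This is where the per-coordinate loss of order $r^{O(r)}$ actually appears, so your $C_r$ must be $r^{O(r)}$ rather than $r^{O(1)}$ if the final exponent is to come out as $r^{C'r^2}$ --- as written, the two ends of your estimate are not consistent. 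Second, the lattice-count comparison $|\cC| \ge r^{-O(r)}\,\mathrm{vol}(E)/\det(\Lambda)$ is false for an arbitrary symmetric ellipsoid; you need the hypothesis that $\cC$ has rank $r$, which gives $\lambda_r(E, \Lambda) \le \sqrt{r}$ (via $K \subseteq \sqrt{r}E$), so that every index satisfies $\lambda_i \le \sqrt{r}$ and the ``degenerate'' directions contribute only a bounded factor each. Both points are fillable, but they are the crux of Tao--Vu's Lemma 3.3 and need to be made explicit for the argument to close.
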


The bound for the size of the generalised arithmetic progression in \Cref{stmt:discrete-john} has since been improved \cite{BH19, vHK23+}, but the one by \citet{TV08} suffices for our purposes.
Their statement crucially defines the convex progression on an arbitrary lattice $\Lambda$ over $\R^d$, rather than $\Z^d$.
This allows us to avoid the full rank requirement $r = d$ in their original result, and take $r \le d$ as in \Cref{stmt:discrete-john} by first setting $\Lambda = \cH \cap \Z^d$, where $\cH$ is an $r$-dimensional subspace containing $\cC$.
With this lattice, we then take a suitable linear isomorphism $f$ and apply their original result to $\cC' = f(\cC)$ with $\Lambda' = f(\Lambda)$ and $\R^r = f(\cH)$.

Using these results, we can now prove \Cref{stmt:green-ruzsa-with-dim(P)<=dimF(X)}.

\begin{proof}[Proof of \Cref{stmt:green-ruzsa-with-dim(P)<=dimF(X)}]
  As all the subgroups of $\Zmodn$ are trivial when $n$ is a prime, we claim that we cannot obtain the second case when applying \Cref{stmt:green-ruzsa-linear-bound-dim(P)} to $X$.
  Observe that none of the cosets obtained in the second outcome can be $\Zmodn$ itself.
  The reason for that is that a single one of those cosets would, by \eqref{eq:assumptions-in-green-ruzsa}, exceed the total cardinality bound given in \Cref{stmt:green-ruzsa-linear-bound-dim(P)} if $C > C'$ is sufficiently large: $$|\Zmodn| = n > \exp(C' \kappa^4 (\log (\kappa + 2))^2)|X|.$$
  The claim follows by noting that the cosets also cannot all be of the form $a \in \Zmodn$, since their total number is bounded by $C' \kappa^3 (\log \kappa)^2$, and, by \eqref{eq:assumptions-in-green-ruzsa}, this is insufficient to cover $X$.

  We have shown that we always get the first outcome of \Cref{stmt:green-ruzsa-linear-bound-dim(P)} under our assumptions, so applying it to $X$ yields a proper coset progression $P'' + H \subseteq \Zmodn$ such that
  \begin{equation*}
    X \subseteq P'' + H, \qquad \dim(P'') \le 2 \kappa + 1 \qquad \text{ and } \qquad |P'' + H| \le \exp(C \kappa^4 (\log \kappa)^2) |X|,
  \end{equation*}
  where we used that $\kappa \ge 2$ and that $C > C'$ is appropriately large.
  Applying now \Cref{stmt:2-proper-from-proper} to $P'' + H$ gives a $2$-proper coset progression $P' + H' \subseteq \Zmodn$ such that
  \begin{equation}\label{eq:properties-of-P'-in-green-ruzsa}
    X \subseteq P' + H', \quad d = \dim(P') \le 2 \kappa + 1 \quad \text{ and } \quad |P' + H'| \le \exp(C \kappa^4 (\log \kappa)^2) |X|,
  \end{equation}
  again by $C > C'$.
  Moreover, $H' = \{0\}$ because $H' = \Zmodn$ is impossible by \eqref{eq:assumptions-in-green-ruzsa}: $$\exp(C \kappa^4 (\log \kappa)^2) |X| < n = |P' + \Zmodn|,$$
  so we will write simply $P'$ for $P' + H'$.

  Let $\phi : P' \to P'_d \subseteq \Z^d$ be the function defined by
  \begin{equation*}
    \phi(y) = (w^{(y)}_1, \ldots, w^{(y)}_d) \quad \text{ where } \quad y = a_0 + \sum_{i = 1}^d w_i^{(y)} a_i
  \end{equation*}
  and
  \begin{equation*}
    P' = \bigg\{ a_0 + \sum_{i = 1}^d w_i a_i : w_i \in \Z, \, 0 \le w_i < \ell_i \bigg\}.
  \end{equation*}
  Since $P'$ is $2$-proper, we know that $\phi$ is a \Freiman isomorphism; this and $X \subseteq P'$ imply that so is $\phi|_X : X \to \phi(X)$.
  Let $X_d = \phi(X)$ and assume without loss of generality (by translating $\phi$ if necessary) that $0 \in X_d$.
  If we define $\cH$ to be the minimal subspace that contains $X_d$, then $\cC' = \cH \cap P'_d$ is a convex progression and $\cC = \cC' - \cC'$ is a symmetric convex progression satisfying $\cC' \subseteq \cC$.
  We can therefore apply \Cref{stmt:discrete-john} to $\cC$ and obtain a generalised arithmetic progression $P_d \subseteq \Z^d$ which, we claim, satisfies
  \begin{equation}\label{eq:properties-of-P_d-in-green-ruzsa}
    X_d \subseteq P_d, \qquad \dim(P_d) \le \dimF(X) \qquad \text{ and } \qquad |P_d| \le \exp(C \kappa^4 (\log \kappa)^2) |X|.
  \end{equation}

  The containment $X_d \subseteq P_d$ follows from $X_d \subseteq \cC' \subseteq \cC \subseteq P_d$, where the last step is given by \Cref{stmt:discrete-john}.
  In order to bound the dimension of $P_d$, let $d_\cC = \dim(\cC)$ and observe that
  \begin{equation}\label{eq:dimension-of-P_d}
    \dim(P_d) \le d_\cC \le \dim(\cH) \le \dimF(X)
  \end{equation}
  by \Cref{stmt:discrete-john}, $\cC \subseteq \cH$ and the definition of $\cH$ as the minimal subspace containing $X_d$.
  Finally, to obtain the claimed bound on the size of $P_d$, we first use that, by \Cref{stmt:discrete-john},
  \begin{equation}\label{eq:first-step-in-bounding-size-of-P_d}
    |P_d| \le d_\cC^{C' d_\cC^2} |\cC| \le \exp(C \kappa^2 \log \kappa) |\cC|
  \end{equation}
  where we used that $d_\cC \le d \le 2\kappa + 1$ by \eqref{eq:properties-of-P'-in-green-ruzsa} and that $C > C'$ is sufficiently large.
  The second step is showing that the size of $\cC$ is at most
  \begin{equation}\label{eq:second-step-in-bounding-size-of-P_d}
    |\cC| = |\cC' - \cC'| \le |P_d' - P_d'| \le 2^d |P_d'| \le \exp(C \kappa^4 (\log \kappa)^2) |X|,
  \end{equation}
  which is due first to $\cC' \subseteq P_d'$, and then by taking a slightly larger $C$ than previous occurrences and using \eqref{eq:properties-of-P'-in-green-ruzsa}.
  Combining \eqref{eq:first-step-in-bounding-size-of-P_d} and \eqref{eq:second-step-in-bounding-size-of-P_d} yields the desired bound on $|P_d|$ for a yet slightly larger value of $C$, and completes the proof of \eqref{eq:properties-of-P_d-in-green-ruzsa}.

  It remains to define a generalised arithmetic progression $P \subseteq \Zmodn$ from $P_d \subseteq \Z^d$.
  Observe that $\phi^{-1}$ is linear (or affine), so we can extend it to obtain a $\psi : \Z^d \to \Zmodn$ which satisfies
  \begin{equation}\label{eq:properties-of-psi-in-green-ruzsa}
    \psi|_{X_d} = (\phi|_X)^{-1}.
  \end{equation}
  Hence, $P = \psi(P_d)$ is a (not necessarily proper) generalised arithmetic progression such that $$X \subseteq P, \qquad |P| \le \exp(C \kappa^4 (\log \kappa)^2) |X| \qquad \text{ and } \qquad \dim(P) \le \dimF(X),$$ by \eqref{eq:properties-of-P_d-in-green-ruzsa}, the linearity of $\psi$ and \eqref{eq:properties-of-psi-in-green-ruzsa}.
  This is exactly \eqref{eq:green-ruzsa-properties-of-P}, so the proof is complete.
\end{proof}

\section{Counting sets with large doubling}\label{sec:appendix}

Recall that $n$ is a large prime and that we are dealing with sets $X \subseteq \Zmodn$ of size $k$.
We restate the result which we aim to prove for the reader's convenience:
\appendixprop*

The proof of \Cref{stmt:appendix} is essentially identical to that of~\cite[Proposition 5.1]{GM16}, and to obtain the statement above we only need to optimize the dependencies between the parameters.
Before proceeding, we recall a key definition from \cite{GM16}:
\begin{defi}
  A set $\{x_1, \ldots, x_d\} \subseteq \Zmodn$ is $M$-dissociated if $\sum_{i = 1}^d \lambda_i x_i \neq 0$ for every $(\lambda_1, \ldots, \lambda_d) \in \Z^d \setminus \{0\}$ such that $\sum_{i = 1}^d |\lambda_i| \le M$.
\end{defi}

It is useful to understand this notion as an analogue of linear independence for elements of $\Zmodn$ taking coefficients in $\Z$, with a restriction on the sum of their magnitudes.
The first result that we need to optimize is \Cref{stmt:bound-on-M-dimension}, a slight strengthening of \cite[Lemma 5.3]{GM16}.
\begin{lem}\label{stmt:bound-on-M-dimension}
  For every sufficiently small $\delta > 0$ and $\eta > 0$, the following holds.
  Fix a large prime $n$ and $M = (\log n) / (\log \log n)^4$.
  If $k \le (\log n)^{2 - \eta}$ and $m \ge \delta k^2 / 10$, then any $M$-dissociated subset of $X \in \cX_3^{(m)}$ has size at most $\big(2 + \delta + o(1)\big)m / k$, as $k \to \infty$.
\end{lem}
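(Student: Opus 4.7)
Let $D = \{y_1, \ldots, y_d\} \subseteq X$ be an $M$-dissociated subset of size $d$; the aim is to show $d \le (2 + \delta + o(1)) m/k$. The approach combines the elementary Sidon-type bound, which handles the extreme ranges of $m$, with a refinement using higher-order sumsets and Pl\"unnecke--Ruzsa in the intermediate regime.

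First I would dispense with two easy cases. Since $D$ is $M$-dissociated with $M \ge 2$, the restricted sumset $D \rsum D$ has size exactly $\binom{d}{2}$, and the containment $D \rsum D \subseteq X \rsum X$ forces $\binom{d}{2} \le m$, whence $d \le (1 + o(1))\sqrt{2m}$. This Sidon bound already matches the target $(2 + \delta + o(1))\,m/k$ in the ``high doubling'' regime $m \ge 2k^2/(2 + \delta)^2$, since $\sqrt{2m} \le (2 + \delta + o(1))\,m/k$ there. Moreover, the claim is trivially satisfied when $(2 + \delta)m/k \ge k$, that is, when $m \ge k^2/(2 + \delta)$, because $d \le |X| = k$. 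Thus I may restrict attention to the intermediate range $\delta k^2/10 \le m < 2k^2/(2 + \delta)^2$, where the Sidon bound alone falls short and the trivial bound $d \le k$ is too weak.

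In this intermediate regime the plan is to exploit the $M$-dissociation at higher order via Pl\"unnecke--Ruzsa. For each integer $h$ with $h \le M/2$, the $M$-dissociation of $D$ (specifically, $2h$-dissociation) ensures that the $h$-fold sumset (with repetitions) $hD$ contains exactly the $\binom{d + h - 1}{h}$ distinct elements $\sum_i \lambda_i y_i$ with $\lambda_i \ge 0$ and $\sum_i \lambda_i = h$. On the other hand, $hD \subseteq hX$, and Pl\"unnecke--Ruzsa applied to $|X + X| \le m + k$ gives $|hX| \le ((m + k)/k)^h \, k$. Combining,
\begin{equation*}
  \binom{d + h - 1}{h} \le \Big(\frac{m}{k} + 1\Big)^h k \qquad \text{for each } 1 \le h \le M/2.
\end{equation*}
The hypothesis $k \le (\log n)^{2 - \eta}$ together with $M = (\log n)/(\log\log n)^4$ guarantees $M/2 \gg \log k$, giving ample flexibility in the choice of $h$. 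Combining this family of inequalities with the Sidon bound $\binom{d}{2} \le m$ and optimizing the choice of $h$ should yield the target $d \le (2 + \delta + o(1))\,m/k$.

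\emph{Main obstacle.} The principal difficulty is extracting the \emph{sharp} leading constant $2 + \delta$, rather than a larger constant or a bound carrying an extraneous $\log k$ factor. A naive application of Pl\"unnecke--Ruzsa with a single choice of $h$ (e.g.\ $h = \lceil \log k \rceil$) yields only $d \le O((m/k) \log k)$, which is too weak by a logarithmic factor. Obtaining the sharp constant is likely to require a more delicate argument: balancing the Pl\"unnecke estimates across multiple values of $h$, or running an inductive peeling argument that trims $D$ to a sub-dissociated set where the estimates can be combined with additional structural information coming from $X \in \cX_3^{(m)}$ (in particular, the fact that $X \setminus D$ must be ``approximately generated'' by $D$ whenever $D$ is near-maximal). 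The large dissociation parameter $M$ furnishes the slack needed for such iteration, and the $o(1)$ error in the statement absorbs the lower-order contributions both from the $+k$ in $|X + X| \le m + k$ and from the Pl\"unnecke iteration itself.
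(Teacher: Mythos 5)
Your treatment of the two easy regimes (the Sidon bound $\binom{d}{2}\le m$ when $m\gtrsim k^2$, and the trivial bound $d\le k$) is correct, but the core of the lemma is the intermediate range where $m/k=\Theta_\delta(k)$, and there your argument does not close and, I believe, cannot close along the proposed lines.

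The problem is quantitative and fatal. Writing $\sigma = m/k + 1$ and $c = d/\sigma$, the inequality $\binom{d+h-1}{h}\le\sigma^h k$ gives at best $(d/\sigma)^h/h! \le k$, i.e.\ $h\log\bigl(ec/h\bigr)\le\log k$ after Stirling. In the intermediate regime $\sigma\ge\delta k/10$ and $d\le k$, so $c\le 10/\delta$ is bounded by an absolute constant, and the left-hand side is maximized (over all real $h>0$) at $h\approx c$, where its value is only $c = O_\delta(1)$. So the inequality is trivially satisfied for \emph{every} choice of $h$, regardless of whether $c = 2+\delta$ or $c=100$, and no optimization over $h\le M/2$ can distinguish them. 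This is not a $\log k$ loss you can iterate away; Pl\"unnecke--Ruzsa applied to higher sumsets simply carries no information at the scale of a multiplicative constant in $d/\sigma$. The ``peeling'' and ``balancing across $h$'' ideas you sketch would need a genuinely new input to break this barrier, and you do not supply one.

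The paper's proof uses an entirely different mechanism, and the contrast is instructive. Rather than comparing $|hD|$ to $|hX|$, it builds a graph $G'$ on $\Zmodn$ with edges given by differences of the form $\pm x_i\pm x_j$, applies a weak regularity lemma to decompose $X$ into pieces $X_1,\dots,X_t$ of small diameter (plus a negligible remainder), and then uses the boundedness of the diameter together with $M$-dissociativity to exhibit a \Freiman isomorphism $\phi_i:X_i\to X_i'\subseteq\Z^d$ carrying a translate of $D$ onto $\{e_1,\dots,e_d\}$. The crucial quantitative input is then the isoperimetric inequality in $\Z^d$ (Wang--Wang), $\big|X_i'+\{e_1,\dots,e_d\}\big|\ge(\tfrac12-\gamma)d|X_i'|$, which is what produces the sharp constant $2$. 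Summing over the components and using that the $X_i+D$ are pairwise disjoint gives $m\ge(\tfrac12-o(1))\,d\,k$. So the leading constant comes from discrete isoperimetry, not from sumset growth; your approach never accesses this kind of geometric information, which is exactly why it stalls at a constant factor.
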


We will begin by giving an overview of the proof from \cite{GM16}, alongside some useful definitions of our own.
Whenever there is $(\lambda_1, \ldots, \lambda_d) \in \Z^d$ such that $x = \sum_{i = 1}^d \lambda_i x_i$ and $\sum_{i = 1}^d |\lambda_i| \le M$, we will say that $x$ is in the $M$-span of $\{x_1, \dots, x_d\}$.
In that same situation, we will say that $x$ can be written as an $M$-bounded combination of $\{x_1, \ldots, x_d\}$.

The proof starts by fixing the value of $M = (\log n)/(\log \log n)^4$ and any $M$-dissociated set $D = \{x_1, \ldots, x_d\} \subseteq X$.
We define $G'$ to be the graph with vertex set $\Zmodn$ and edges between a pair of vertices if their difference can be written as $\pm x_i \pm x_j$ for some $i, j \in \{1, \ldots, d\}$.
The graph $G'$ is useful because of the following \namecref{stmt:connected-components-of-G-are-spans}:

\begin{obs}\label{stmt:connected-components-of-G-are-spans}
  For each $a \in \Zmodn$, let $V_a \subseteq V(G')$ denote its connected component in $G'$.
  Every element $b \in V_a$ can be expressed as $b = a + \sum_{j = 1}^d \lambda_j x_j$ with $\sum_{j = 1}^d |\lambda_j| \le 2 \diam(V_a)$.
\end{obs}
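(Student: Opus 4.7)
The proof is a straightforward telescoping argument using the path structure in $G'$, so I would just execute it directly rather than doing anything clever.

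The plan is as follows. Fix $b \in V_a$ and take a shortest path $a = v_0, v_1, \ldots, v_\ell = b$ from $a$ to $b$ in $G'$; by definition of the diameter, $\ell \le \diam(V_a)$. Each edge $v_{i}v_{i+1}$ of $G'$ corresponds, by the definition of the graph, to an expression
\begin{equation*}
  v_{i+1} - v_{i} = \eta_i^{(1)} x_{j_i^{(1)}} + \eta_i^{(2)} x_{j_i^{(2)}}
\end{equation*}
with $\eta_i^{(1)}, \eta_i^{(2)} \in \{-1, +1\}$ and $j_i^{(1)}, j_i^{(2)} \in \{1, \ldots, d\}$ (allowing $j_i^{(1)} = j_i^{(2)}$, in which case the two coefficients combine into a single coefficient of absolute value at most $2$). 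In either case, the sum of the absolute values of the coefficients used to express $v_{i+1} - v_i$ in terms of $x_1, \ldots, x_d$ is exactly $2$.

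Then I would simply telescope:
\begin{equation*}
  b - a = \sum_{i = 0}^{\ell - 1} (v_{i+1} - v_i) = \sum_{j = 1}^{d} \lambda_j x_j,
\end{equation*}
where each $\lambda_j \in \Z$ is obtained by collecting the contributions of $x_j$ across all edges of the path. By the triangle inequality applied to the coefficient sums, $\sum_{j = 1}^d |\lambda_j| \le 2\ell \le 2\diam(V_a)$, which is exactly the claimed bound.

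There is no real obstacle: the only thing to be careful about is the case where the same $x_i$ is used twice on a single edge (giving a $\pm 2 x_i$ contribution), but this is subsumed by recording the absolute coefficient sum per edge as $2$ regardless. Rearranging as $b = a + \sum_j \lambda_j x_j$ finishes the proof.
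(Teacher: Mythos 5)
Your proof is essentially identical to the paper's: both take a shortest path from $a$ to $b$, telescope the edge differences, and bound $\sum_j |\lambda_j|$ by noting each edge contributes coefficients with absolute value summing to $2$, giving $\sum_j |\lambda_j| \le 2\ell \le 2\diam(V_a)$. The only difference is presentational — you write out the per-edge decomposition explicitly, while the paper compresses it by letting $\lambda_j^\pm$ count signed occurrences of $x_j$ along the path.
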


\begin{proof}
  Take $b \in V_a$, and consider any shortest path $P$ connecting $a$ to $b$ in $G'$.
  Observe that
  \begin{equation*}
    b - a = \sum_{j = 1}^d (\lambda_j^+ - \lambda_j^-) x_j
  \end{equation*}
  where $\lambda_j^+, \lambda_j^-$ count respectively the number of times $x_j, -x_j$ appear as terms in $P$.
  Then,
  \begin{equation*}
    \sum_{j = 1}^d \big(|\lambda_j^+| + |\lambda_j^-|\big) \le 2 |P| \le 2 \diam(V_a)
  \end{equation*}
  where the last step follows from $P$ being a shortest path in $V_a$.
\end{proof}

Having thus related the span of $D$ with the diameter of connected components in $G'$, we will decompose $G = G'[X]$ using the following \namecref{stmt:weak-regularity-appendix}, also from \cite{GM16}, with $\Delta = k^{1/2} \log k$:

\begin{lem}[{\cite[Lemma 5.4]{GM16}}]\label{stmt:weak-regularity-appendix}
  Let $G$ be a graph with vertex set $V(G)$ and let $\Delta > 1$.
  There exists a partition $V(G) = X_* \cup X_1 \cup \cdots \cup X_t$ such that
  \begin{enumerate}
    \item $|X_*| \le 32 (v(G)/\Delta)^2$.
    \item $e(X_i, X_j) = 0$ for every $i \neq j$.
    \item The diameter of $G[X_i]$ is at most $\Delta$ for every $i \in \{1, \ldots, t\}$.
  \end{enumerate}
\end{lem}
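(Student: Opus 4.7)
The plan is to construct $X_*$ by iteratively removing thin BFS layers from components of large diameter. Initialize $X_* = \emptyset$, and while some connected component $C$ of $G[V(G) \setminus X_*]$ satisfies $\diam_{G[C]}(C) > \Delta$, I would pick any vertex $v \in C$ and compute its BFS layers $L_0 = \{v\}, L_1, \ldots, L_h$ in $G[C]$. For any two vertices $u, w \in C$ witnessing the large diameter, the triangle inequality gives $d_{G[C]}(v, u) + d_{G[C]}(v, w) \ge d_{G[C]}(u, w) > \Delta$, so $v$ has eccentricity exceeding $\Delta/2$, hence $h > \Delta/2$. By averaging over the non-trivial layers $L_1, \ldots, L_{\lfloor \Delta/2 \rfloor + 1}$, at least one of them has size at most $2|C|/\Delta$; pick such a layer $L_{i^*}$ and move it into $X_*$. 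The algorithm terminates once no component has diameter exceeding $\Delta$, at which point the remaining connected components of $G[V(G) \setminus X_*]$ are declared $X_1, \ldots, X_t$.

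Conditions (2) and (3) follow immediately from the construction: the $X_i$ are distinct connected components of $G[V(G) \setminus X_*]$, so there are no edges across them, and the algorithm stops precisely when every remaining component has diameter at most $\Delta$. The substantive task is to establish condition (1).

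For the bound on $|X_*|$, I would treat the easy case first: when $\Delta^2 \le 32\, v(G)$, the trivial inequality $|X_*| \le v(G) \le 32 (v(G)/\Delta)^2$ already suffices. Otherwise, I would use a potential function $\Phi(\cP) = \sum_{C \in \cP} |C|^2$ on the current partition $\cP$, noting that $\Phi$ starts at most at $v(G)^2$ and is non-negative. When we process a component of size $m$, remove a layer of size $s \le 2m/\Delta$, and split it into parts of sizes $m_1, \ldots, m_p$ with $\sum_j m_j = m - s$, the elementary inequality $\sum_j m_j^2 \le (m - s)^2$ shows that $\Phi$ decreases by at least $m^2 - (m - s)^2 \ge ms$. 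Summing over iterations and using that each processed component has size at least $\Delta + 1$ (forced by its diameter exceeding $\Delta$), this first yields $\sum_t s_t \le v(G)^2/\Delta$.

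The main obstacle is refining this to the quadratically sharper $|X_*| \le 32(v(G)/\Delta)^2$. The key ingredient I would exploit is that removing $L_{i^*}$ with $i^* \le \lfloor \Delta/2 \rfloor + 1$ simultaneously ``retires'' the inner piece $A = L_0 \cup \ldots \cup L_{i^* - 1}$: since the BFS from $v$ in $G[A]$ agrees with the one in $G[C]$ (every shortest path from $v$ to a vertex at BFS depth less than $i^*$ stays inside $A$), we get $\diam(G[A]) \le 2(i^* - 1) \le \Delta$, and $A$ is never processed again. Thus each iteration both contributes $|L_{i^*}|$ vertices to $X_*$ \emph{and} permanently removes $|A|$ vertices from the pool of active components. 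Accounting for both in the potential drop supplies the additional factor of $\Delta$ required to replace $\sum_t m_t s_t \le v(G)^2$ by the stronger $\sum_t m_t (|L_{i^*_t}| + |A_t|) \lesssim v(G)^2$, and then dividing by $m_t \ge \Delta$ twice (once for $s_t$ and once for the retired inner mass that frees future processing) yields the bound $|X_*| \le 32 (v(G)/\Delta)^2$. The routine bookkeeping that turns this into the explicit constant $32$ follows the argument in \cite[Lemma~5.4]{GM16}.
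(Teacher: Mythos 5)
Your construction (iterated BFS layer removal plus retiring the inner ball $A = L_0 \cup \cdots \cup L_{i^*-1}$) is the right kind of argument, and your observations about condition (2), condition (3), and the diameter bound $\diam(G[A]) \le 2(i^*-1) \le \Delta$ are all correct. But there is a genuine gap in how you choose $i^*$, and it is exactly the gap that separates the linear bound $|X_*| \lesssim v(G)^2/\Delta$ (which you correctly derive first) from the quadratic bound $|X_*| \le 32\,(v(G)/\Delta)^2$ that you need.

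You select $i^*$ to be \emph{any} index in $\{1,\ldots,\lfloor\Delta/2\rfloor+1\}$ with $|L_{i^*}|\le 2|C|/\Delta$. This permits $i^*=1$, in which case $A = L_0 = \{v\}$ is a single vertex and $|A_t|$ could be $1$ at every step. The whole engine of your improvement is the claim that each iteration ``permanently removes $|A|$ vertices from the pool,'' but if $|A_t| = 1$ that only gives at most $v(G)$ iterations, and combining with $s_t \le 2v(G)/\Delta$ yields $|X_*| \lesssim v(G)^2/\Delta$ — the bound you already had. The phrase ``dividing by $m_t \ge \Delta$ twice'' does not point to a second source of a $\Delta$: from $\sum_t m_t(s_t+a_t)\le v(G)^2$ and $m_t>\Delta$ you get one factor of $\Delta$, and the second factor has to come from a lower bound $a_t \gtrsim \Delta$ (or $s_t+a_t\gtrsim\Delta$), which your layer choice does not supply. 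Note also that you cannot fix this by simply taking the \emph{largest} admissible $i^*$: it is consistent with your averaging that $L_1$ is the unique small layer, forcing $i^*=1$.

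The repair is to average over a range of indices bounded away from $0$: take $i^*\in\{\lfloor\Delta/4\rfloor+1,\ldots,\lfloor\Delta/2\rfloor\}$ (all nonempty since the BFS has depth $>\Delta/2$). This window has at least $\Delta/8$ indices (for $\Delta\ge 8$; for smaller $\Delta$ the trivial case you already handled applies), so some layer in it has size at most $8|C|/\Delta$, and crucially $|A_t|\ge i^*\ge\Delta/4$ since the inner layers are all nonempty. The $A_t$'s are pairwise disjoint, so there are at most $4\,v(G)/\Delta$ iterations, each contributing at most $8\,v(G)/\Delta$ vertices to $X_*$, giving $|X_*|\le 32\,(v(G)/\Delta)^2$. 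Once the lower bound on $|A_t|$ is in place you do not need the potential function at all — a direct count of iterations is both simpler and gives the same constant — so the potential argument in your draft is doing no essential work beyond the linear bound.
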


Applying \Cref{stmt:weak-regularity-appendix} to $G$ yields a partition $X = X_* \cup X_1 \cup \cdots \cup X_t$ such that
\begin{enumerate}[(i)]
  \item if $X_i \neq X_j$, then $X_i + D$ and $X_j + D$ are disjoint, and
  \item every $X_i$ has a $y_i \in X_i$ such that $X_i - y_i$ is contained in the $2\Delta$-span of $D$.
\end{enumerate}
To see that property (i) holds, notice that if $X_i + D$ and $X_j + D$ intersect, then there is an edge of $G$ connecting $X_i$ and $X_j$, contradicting item (2) of \Cref{stmt:weak-regularity-appendix}.
Property (ii), on the other hand, follows directly from an application of \Cref{stmt:connected-components-of-G-are-spans} to $X_i$, the diameter of which is bounded by item (3) in \Cref{stmt:weak-regularity-appendix}.

To combine these two properties into a proof of \Cref{stmt:bound-on-M-dimension}, notice that the disjointness given by property (i) yields a lower bound for $X + X$ in terms of each $X_i$: $$|X + X| \ge \sum_i |X_i + D|.$$
We now want a lower bound for each $|X_i + D|$ in terms of $d$.
Towards that goal, we will define, for every $i \in \{1, \ldots, t\}$, a \Freiman isomorphism $\phi_i : X_i \to X_i' \subseteq \Z^d$.
Each will map (a translate of) $D$ to a structured set, where structured only means that we have a lower bound for its sumset with any sufficiently small set.
These isomorphisms are naturally defined by the $2\Delta$-bounded decomposition of $X_i - y_i$ given by property (ii).

\begin{obs}\label{stmt:appendix-freiman-isomorphism}
  The function $\phi_i : X_i \to X_i' \subseteq \Z^d$ defined by $\phi_i(x) = (\lambda_1, \ldots, \lambda_d)$, where $x - y_i = \sum_{j = 1}^d \lambda_j x_j$ is a $2\Delta$-bounded decomposition of $x - y_i$, is a \Freiman isomorphism.
\end{obs}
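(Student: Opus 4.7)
The plan is to verify, in order, (i) that $\phi_i$ is well-defined, (ii) that it is a bijection onto its image $X_i' := \phi_i(X_i)$, and (iii) that both $\phi_i$ and $\phi_i^{-1}$ are \Freiman homomorphisms. The workhorse throughout is the $M$-dissociation of $D = \{x_1, \ldots, x_d\}$, which converts every identity of the form $\sum_j \mu_j x_j = 0$ with $\sum_j |\mu_j| \le M$ into the conclusion that all $\mu_j$ vanish. The whole argument then reduces to a bookkeeping check that the sum of magnitudes of the coefficients that appear never exceeds $M$.

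For well-definedness, property (ii) guarantees at least one $2\Delta$-bounded decomposition $x - y_i = \sum_j \lambda_j x_j$ of every $x \in X_i$; if there were two such decompositions, their difference would exhibit a nontrivial $\Z$-linear combination $\sum_j (\lambda_j - \lambda_j') x_j = 0$ with $\sum_j |\lambda_j - \lambda_j'| \le 4\Delta$, which contradicts $M$-dissociation provided $4\Delta \le M$. For injectivity, $\phi_i(x) = \phi_i(x')$ means the two elements have the same decomposition, hence $x - y_i = x' - y_i$. Surjectivity onto $X_i'$ is tautological.

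For the \Freiman homomorphism property, suppose $x^{(1)}, x^{(2)}, x^{(3)}, x^{(4)} \in X_i$ satisfy $x^{(1)} + x^{(2)} = x^{(3)} + x^{(4)}$ in $\Zmodn$. Write $x^{(s)} - y_i = \sum_j \lambda_j^{(s)} x_j$; subtracting $2 y_i$ from both sides yields
\begin{equation*}
\sum_{j = 1}^d \bigl(\lambda_j^{(1)} + \lambda_j^{(2)} - \lambda_j^{(3)} - \lambda_j^{(4)}\bigr) x_j = 0,
\end{equation*}
and the sum of absolute values of these coefficients is at most $8\Delta$. Once we check $8\Delta \le M$, $M$-dissociation forces $\lambda_j^{(1)} + \lambda_j^{(2)} = \lambda_j^{(3)} + \lambda_j^{(4)}$ for every $j$, i.e.\ $\phi_i(x^{(1)}) + \phi_i(x^{(2)}) = \phi_i(x^{(3)}) + \phi_i(x^{(4)})$ in $\Z^d$. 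The converse, needed for $\phi_i^{-1}$ to be a \Freiman homomorphism, requires no dissociation: equality of the coefficient vectors can simply be multiplied componentwise by the $x_j$ and summed, recovering $x^{(1)} + x^{(2)} = x^{(3)} + x^{(4)}$ in $\Zmodn$.

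The only quantitative point to settle is therefore $8\Delta \le M$, which will follow from the parameter choices in \Cref{stmt:bound-on-M-dimension}. With $\Delta = k^{1/2} \log k$ and the hypothesis $k \le (\log n)^{2 - \eta}$, we have $\Delta \le (\log n)^{1 - \eta/2} \log \log n$ up to an absolute constant, whereas $M = (\log n) / (\log \log n)^4$, so $8\Delta / M \to 0$ as $n \to \infty$ and the inequality holds for all sufficiently large $n$. No real obstacle arises here; the ``hard'' content of the lemma lies in the earlier graph-theoretic decomposition via \Cref{stmt:weak-regularity-appendix} and in the choice of $M$, while \Cref{stmt:appendix-freiman-isomorphism} itself is the clean algebraic payoff of $M$-dissociation.
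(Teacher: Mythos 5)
Your proposal is correct and follows essentially the same route as the paper: both prove the forward direction by subtracting the $2\Delta$-bounded decompositions to get a combination $\sum_j(\lambda_j^{(1)}+\lambda_j^{(2)}-\lambda_j^{(3)}-\lambda_j^{(4)})x_j=0$ with coefficient sum at most $8\Delta\le M$, invoke $M$-dissociation, and observe that the inverse direction is immediate from linearity. The only difference is that you additionally spell out well-definedness and injectivity of $\phi_i$ (using $4\Delta\le M$), which the paper leaves implicit; this is a harmless, slightly more careful presentation of the identical argument.
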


\begin{proof}
  The fact that $\phi^{-1} : X_i' \to X_i$ is a \Freiman homomorphism follows from its linearity, so we focus on the other direction of the implication.
  Take $a_1, a_2, a_3, a_4 \in X_i$ such that
  \begin{equation}\label{eq:freiman-isomorphism}
    a_1 + a_2 = a_3 + a_4,
  \end{equation}
  and we want to show that $\phi_i(a_1) + \phi_i(a_2) = \phi_i(a_3) + \phi_i(a_4)$.
  For $\ell \in \{1, \ldots, 4\}$, write
  \begin{equation}\label{eq:decomposition-of-ai}
    a_\ell - y_i = \sum_{i = 1}^d x_i \lambda^{(\ell)}_i,
  \end{equation}
  a $2 \Delta$-bounded combination of $D$.
  We now replace \eqref{eq:decomposition-of-ai} in \eqref{eq:freiman-isomorphism} and rearrange to obtain
  \begin{equation*}
    \sum_{j = 1}^d x_j (\lambda_j^{(1)} + \lambda_j^{(2)} - \lambda_j^{(3)} - \lambda_j^{(4)}) = 0.
  \end{equation*}
  Using the fact that each \eqref{eq:decomposition-of-ai} is $2\Delta$-bounded yields, as $k \to \infty$,
  \begin{equation*}
    \sum_{\ell = 1}^4 \sum_{j = 1}^d |\lambda_j^{(\ell)}| \le 4 (2\Delta) \le 8 k^{1/2} \log k \le (\log n)^{1 - \eta / 3} \le M,
  \end{equation*}
  where the inequalities follow by the definitions/assumptions which we now recall
  \begin{equation*}
    \Delta = k^{1/2} \log k, \qquad k \le (\log n)^{2 - \eta} \qquad \text{ and } \qquad M = (\log n)/(\log \log n)^4.
  \end{equation*}
  However, $D$ is $M$-dissociated, so we conclude that, for all $j \in \{1, \ldots, d\}$,
  \begin{equation*}
    (\lambda_j^{(1)} + \lambda_j^{(2)}) - (\lambda_j^{(3)} + \lambda_j^{(4)}) = 0,
  \end{equation*}
  which, by the definition of $\phi_i$, is just another way to write $\phi_i(a_1) + \phi_i(a_2) = \phi_i(a_3) + \phi_i(a_4)$.
\end{proof}

Observe that $\phi_i(D + y_i) = \{e_1, \ldots, e_d\}$, where $\{e_1, \ldots, e_d\}$ is the canonical basis of $\Z^d$.
Hence, to obtain a lower bound for the size of $X_i + D$ in terms of $d$ and complete the proof of \Cref{stmt:bound-on-M-dimension}, we will bound $|\phi_i(X_i) + \phi_i(D + y_i)|$ using the isoperimetric inequality of \citet{WW77}, as stated in~\cite{GM16} (see also~\cite{BL91}).

\begin{prop}[{\cite{WW77}, see~{\cite[``The Isoperimetric Inequality'']{GM16}}}]\label{stmt:isoperimetric-inequality}
  For every $\gamma > 0$ and $C > 0$, there exists $d_0 = d_0(\gamma, C)$ such that the following holds for every $d \ge d_0$.
  If $S \subseteq \Z^d$ is a set of size at most $C d$, then
  \begin{equation*}
    \big|S + \{e_1,\ldots,e_d\}\big| \ge \left(\frac{1}{2} - \gamma\right) d |S|.
  \end{equation*}
\end{prop}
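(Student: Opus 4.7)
The plan is to prove this classical Wang--Wang vertex-isoperimetric inequality on $\Z^d$ via the method of compressions, followed by a combinatorial analysis of the resulting down-set. First, for each coordinate $i$, I would define the compression $C_i$ which, within each column $\{x + t e_i : t \in \Z\} \cap S$, pushes the elements down to an initial segment of $\Z$ in the $i$-th coordinate. A column-by-column case analysis shows $|C_i(S) + \{e_1,\ldots,e_d\}| \le |S + \{e_1,\ldots,e_d\}|$ (in each column, pushing to the bottom cannot enlarge either the direction-$i$ ``roof'' or the orthogonal sumsets). Iterating until stability reduces the problem to the case that $S$ is a down-set in $\Z_{\ge 0}^d$ containing the origin.

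Second, for such a down-set $S$, I would observe that $(S + e_i) \setminus S$ consists of the $e_i$-shifts of the ``roof'' of $S$ in direction $i$ (one point per column), so $|(S + e_i) \setminus S| = |\pi_i(S)|$, where $\pi_i : \Z^d \to \Z^{d-1}$ drops the $i$-th coordinate. Consequently
\begin{equation*}
  |S + \{e_1,\ldots,e_d\}| \ge (|S| - 1) + \Big|\bigcup_{i = 1}^d (S + e_i) \setminus S\Big|.
\end{equation*}

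Third, introduce the ``new multiplicity'' $r'(y) = \#\{i : y - e_i \in S \text{ and } y \notin S\}$; then $\sum_y r'(y) = \sum_i |\pi_i(S)|$ and the union above has size $\#\{y : r'(y) \ge 1\}$. By Cauchy--Schwarz, the theorem reduces to two estimates: (a) $\sum_i |\pi_i(S)| \ge (1 - o(1))\, d|S|$, because $d|S| - \sum_i |\pi_i(S)|$ counts ``vertical'' edge-pairs $(s, s+e_i) \in S^2$, of which there are $o(d|S|)$ when $|S| \le Cd$ constrains column heights across directions; and (b) $\sum_y r'(y)^2 \le (2 + o(1)) \sum_y r'(y)$, which exploits the key down-set feature: if $r'(y) = r$ with $\{i_1, \ldots, i_r\}$ the relevant indices, then the whole hypercube $\{y - \sum_{i \in I} e_i : I \subseteq \{i_1,\ldots,i_r\}\}$ of $2^r$ points lies in $S \cup \{y\}$ (by coordinatewise comparison with the points $y - e_{i_j} \in S$), forcing $2^r - 1 \le |S| \le Cd$ and hence $r = O(\log d)$. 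A convexity argument on the degree sequence $(r'(y))$ then promotes this pointwise bound to the required second-moment estimate.

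The main obstacle is obtaining the sharp constant $1/2$ rather than a weaker $1/(C+1)$. A naive application of Cauchy--Schwarz directly to the full bipartite ``edge graph'' between $S$ and $S + \{e_1,\ldots,e_d\}$ gives only $|S + \{e_1,\ldots,e_d\}| \ge d|S|/(C+1)$, since collisions can, \emph{a priori}, be as dense as $|S|^2 \le Cd|S|$. Circumventing this requires first peeling off the $|S| - 1$ ``old'' elements (those $s + e_i$ that already lie in $S$ --- in a down-set with $0 \in S$, every element except the origin qualifies), and then exploiting the hypercube substructure above to control collisions among the ``new'' elements. That $1/2$ is best possible is witnessed by the ``bouquet'' $S = \{0, e_1, \ldots, e_{d-1}\}$, for which a direct calculation yields $|S + \{e_1,\ldots,e_d\}| = (d^2 + 3d - 2)/2 = (d/2 + O(1))\,|S|$; any down-set of comparable size but different shape produces a strictly larger sumset, and ruling this out cleanly is the delicate step of the argument.
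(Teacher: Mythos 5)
The paper does not prove this proposition; it cites Wang--Wang \cite{WW77} (see also \cite{GM16}), so there is no in-paper proof to compare against. Your attempt is a from-scratch derivation of the cited result, and it has one genuine error and one genuine gap.

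\textbf{The compression step is false as stated.} The inequality $|C_i(S) + \{e_1,\ldots,e_d\}| \le |S + \{e_1,\ldots,e_d\}|$ can fail. Take $S = \{0,\, -e_1 + e_2\} \subseteq \Z^d$. Then $S + \{e_1,\ldots,e_d\}$ consists of the $d$ vectors $e_j$ together with $-e_1 + 2e_2$ and $-e_1 + e_2 + e_j$ for $j \ge 3$ (the point $e_2$ is hit twice), so $|S + \{e_1,\ldots,e_d\}| = 2d - 1$. But $C_1(S) = \{0, e_2\}$, and $C_1(S) + \{e_1,\ldots,e_d\} = \{e_j\}_{j} \cup \{e_2 + e_j\}_{j}$ consists of $2d$ distinct points. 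The culprit is that $S + \{e_1,\ldots,e_d\}$ is a one-sided sumset that may overlap $S$, and compressing a column downward can destroy collisions between the shifted copies. The fix is to work with the closed shift $S + \{0, e_1, \ldots, e_d\}$: there the column-by-column comparison does go through, because within a column the uncompressed set $A \cup (A + 1)$ always has at least $|A| + 1$ elements (for nonempty finite $A \subseteq \Z$), which supplies the ``$+1$'' that the compressed side attains. One then uses that $|S' + \{0, e_1,\ldots,e_d\}| = 1 + |S' + \{e_1,\ldots,e_d\}|$ for a down-set $S' \ni 0$, while $|S + \{0, e_1,\ldots,e_d\}| \le |S| + |S + \{e_1,\ldots,e_d\}|$ in general; the resulting loss of $|S| - 1 \le Cd$ is $o(d|S|)$ and is absorbed into $\gamma$.

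\textbf{Step (5b) is not justified.} The hypercube argument correctly yields the pointwise bound $r'(y) = O(\log d)$. But a convexity (majorization) argument on a nonnegative sequence with prescribed sum $T$ and pointwise cap $R$ can only give $\sum_y r'(y)^2 \le RT$ --- convexity pushes the sum of squares \emph{up} to the extremes, so this produces $O(\log d)\sum_y r'(y)$, not $(2 + o(1))\sum_y r'(y)$. What is actually required is the estimate
\begin{equation*}
  B := \sum_y \binom{r'(y)}{2} = \#\big\{(z, \{i,j\}) : z,\, z+e_i,\, z+e_j \in S,\ z + e_i + e_j \notin S\big\} \le \Big(\tfrac12 + o(1)\Big)\, d\,|S|,
\end{equation*}
and this is tight for the bouquet, where $B = \binom{d-1}{2}$ and $d|S| = d^2$, so $B/(d|S|) \to \tfrac12$. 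A correct proof must recover this constant exactly; the claimed convexity shortcut provides no mechanism for doing so. I believe the bound is true (all your worked examples conform to it), but it requires an argument specific to down-sets --- some double-counting of the broken corners $(z, \{i,j\})$ that exploits more than the crude $O(\log d)$ cap on $r'(y)$.
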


\begin{proof}[Proof of \Cref{stmt:bound-on-M-dimension}]
  Fix $X \in \cX_3^{(m)}$ and let $D = \{x_1, \ldots, x_d\}$ be any $M$-dissociated subset of $X$.
  Towards our aim of showing that $|D| = d \le \big(2 + \delta + o(1)\big) m / k$, recall the definition of the graph $G'$: its vertices are $\Zmodn$ and there are edges between vertices $a, b \in \Zmodn$ only when some of $\pm (a - b)$ can be written as either $x_i - x_j$ or $x_i + x_j$ for $i, j \in \{1, \ldots, d\}$.

  We apply \Cref{stmt:weak-regularity-appendix} to $G = G'[X]$ with $\Delta = k^{1/2} \log k$ and obtain $X_* \cup X_1 \cup \cdots \cup X_t$, a partition of $X$ such that $e(X_i, X_j) = 0$ for all $i \neq j$,
  \begin{equation}\label{eq:bound-on-diam-Xi}
    \diam(G[X_i]) \le k^{1/2} \log k
  \end{equation}
  and
  \begin{equation}\label{eq:bound-on-X-trash-set}
    |X_*| \le 32 \left(\frac{k}{k^{1/2} \log k}\right)^2 = o(k).
  \end{equation}

  In order to obtain a lower bound for $X + X$, we use property (i) of $G$ to show that
  \begin{equation}\label{eq:bound-on-X+X-by-parts}
    |X + X| \ge \sum_{i = 1}^t |X_i + D|.
  \end{equation}

  Now, we define, for each $i \in \{1, \ldots, t\}$, the function $\phi_i$; by property (ii) and \Cref{stmt:appendix-freiman-isomorphism}, we have that $\phi_i$ is a \Freiman isomorphism from $X_i$ to $X_i' \subseteq \Z^d$.
  By the definition of \Freiman isomorphisms, we obtain
  \begin{equation}\label{eq:freiman-iso-for-Xi}
    |X_i + D| = |X_i + (D + y_i)| = |X_i' + \phi_i(D + y_i)|.
  \end{equation}

  Recall that the definition of $\phi_i$ implies that $\phi_i(D + y_i) = \{e_1, \ldots, e_d\}$.
  Hence, to apply \Cref{stmt:isoperimetric-inequality} to the right-hand side of \eqref{eq:freiman-iso-for-Xi}, we need to bound $|X_i'|$ in terms of $d$ and show that $d$ is sufficiently large.
  We will show both of these by observing that if $k > 10d/\delta$, then we are already done, since $$d \le \frac{\delta k}{10} \le \frac{m}{k} \le \big(2 + \delta + o(1)\big)\frac{m}{k},$$ which is what we wanted to show.
  We can therefore assume that $d \ge \delta k / 10$, which simultaneously implies that $d$ is large when $k \to \infty$, and that $|X_i'| \le k \le 10d/\delta$.
  Applying \Cref{stmt:isoperimetric-inequality} to $X_i'$ with $\gamma = \delta^2$ and $C = 10/\delta$, we obtain
  \begin{equation}\label{eq:bound-on-Xi'-sumset}
    \big|X_i' + \{e_1, \ldots, e_d\}\big| \ge \left(\frac{1}{2} - \delta^2 \right) d |X_i'|
  \end{equation}
  for each $i \in \{1, \ldots, t\}$.

  Replacing \eqref{eq:bound-on-Xi'-sumset} in \eqref{eq:bound-on-X+X-by-parts}, and using \eqref{eq:freiman-iso-for-Xi}, we conclude that
  \begin{equation*}
    |X + X| \ge d \left( \frac{1}{2} - \delta^2 \right ) \sum_{i = 1}^t |X_i'|.
  \end{equation*}
  Thus, using the fact that $|X_i'| = |X_i|$, which follows from each $\phi_i$ being a \Freiman isomorphism, and that $X = X_* \cup X_1 \cup \cdots X_t$ is a partition, we have
  \begin{equation*}
    d \left( \frac{1}{2} - \delta^2 \right ) \sum_{i = 1}^t |X_i'| = d \left( \frac{1}{2} - \delta^2 \right ) \big(|X| - |X_*|\big) = d \left( \frac{1}{2} - \delta^2 \right) (1 - o(1)) k
   \end{equation*}
   where we used \eqref{eq:bound-on-X-trash-set} to bound $|X_*|$.
   Finally, it follows from $|X + X| \le m$ and $\delta$ being sufficiently small that $d \le \big(2 + \delta + o(1)\big)m / k$ as $k \to \infty$.
\end{proof}

With \Cref{stmt:bound-on-M-dimension}, we need only one more piece, the following \namecref{stmt:count-of-dissociated-coefficients}.
It is a simple count of the number of choices for coefficients in a dissociated set, and we will use it to repeat the proof of Proposition~5.1 in \citet{GM16} and obtain \Cref{stmt:appendix}.

\begin{lem}[{\cite[Lemma 5.5]{GM16}}]\label{stmt:count-of-dissociated-coefficients}
  For every $M, d \in \N$, the number of choices for $\lambda_1, \ldots, \lambda_d \in
  \Z$ such that $\sum_{i = 1}^d |\lambda_i| \le M$ is at most $(4d)^M$.
\end{lem}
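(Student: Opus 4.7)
The plan is to translate the count of integer tuples $(\lambda_1, \ldots, \lambda_d) \in \Z^d$ with $\sum_{i = 1}^d |\lambda_i| \le M$ into a count of words of length $M$ over a small alphabet. This converts a slightly awkward counting problem into the trivial observation that an alphabet of size $K$ supports exactly $K^M$ words of length $M$.

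The key step is to construct an injection $\Phi$ from the set of valid tuples into $\Sigma^M$, where $\Sigma = \{\star\} \cup \big(\{1, \ldots, d\} \times \{+, -\}\big)$ is an alphabet of size $|\Sigma| = 2d + 1$. Given a valid tuple with $s := \sum_i |\lambda_i| \le M$, I would define $\Phi(\lambda_1, \ldots, \lambda_d)$ to be the word obtained by concatenating, for $i = 1, \ldots, d$ in that order, exactly $|\lambda_i|$ copies of the symbol $\big(i, \mathrm{sign}(\lambda_i)\big)$ (and no symbol at all if $\lambda_i = 0$), followed by $M - s$ copies of the null symbol $\star$ to pad the word to length $M$. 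The tuple $(\lambda_1, \ldots, \lambda_d)$ can be recovered from $\Phi(\lambda_1, \ldots, \lambda_d)$ simply by counting the occurrences of each letter $(i, +)$ and $(i, -)$, so $\Phi$ is injective.

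The proof then concludes by observing that the number of valid tuples is at most $|\Sigma|^M = (2d + 1)^M \le (4d)^M$, which holds for every $d \ge 1$. There is no real obstacle here: this is a purely combinatorial bookkeeping lemma whose entire content lies in choosing a clean encoding, and I expect the proof in the paper (if given) to follow essentially this outline or a direct summation of $\binom{d + k}{d}$-type binomial coefficients.
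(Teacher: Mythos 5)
Your proof is correct. Note, however, that the paper does not actually give a proof of this lemma -- it is simply cited as Lemma~5.5 of Green and Morris \cite{GM16}, so there is no in-paper argument to compare against. Your encoding via the injection $\Phi$ into $\Sigma^M$ with $\Sigma = \{\star\} \cup (\{1,\ldots,d\} \times \{+,-\})$ is a clean, self-contained derivation; it even gives the slightly sharper bound $(2d+1)^M$, which implies $(4d)^M$ since $2d + 1 \le 4d$ for all $d \ge 1$. The injectivity check is sound: from a word in the image one recovers each $\lambda_i$ as the count of $(i,+)$ symbols minus the count of $(i,-)$ symbols, so distinct tuples yield distinct words.
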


\begin{proof}[Proof of \Cref{stmt:appendix}]
  Fix $M = (\log n) / (\log \log n)^4$ and take $X \in \cX_3^{(m)}$.
  We will count the choices for $X$ by first choosing a maximal $M$-dissociated subset $D = \{x_1, \ldots, x_d\} \subseteq X$ and then choosing the remaining elements of $X \setminus D$ using the properties of $D$.

  First, we count the choices for $D$ naively, and rely on \Cref{stmt:bound-on-M-dimension} to bound its size.
  That is, we apply \Cref{stmt:bound-on-M-dimension} to $D$, obtain ${|D| \le d \le \big(2 + \delta + o(1)\big)m / k}$, and thus deduce that the number of choices for $D$ is at most:
  \begin{equation}\label{eq:count-of-D}
    \sum_{t = 1}^d \binom{n}{t} \le (n + 1)^d \le n^{(2 + \delta + o(1)) m / k}.
  \end{equation}

  The second step is counting the choices for $X \setminus D$, and we do so by counting the possible ways to write each of its elements as an $M$-bounded combination of $D$.
  Fix $x' \in X \setminus D$, and note that the maximality of $D$ implies that there is $\Lambda = \{\lambda_0, \lambda_1, \ldots, \lambda_d\} \subseteq \Z$ such that $$\lambda_0 x' + \sum_{j = 1}^d \lambda_j x_j = 0$$ and the elements of $\Lambda$ satisfy $$\sum_{j = 0}^d |\lambda_j| \le M, \qquad \lambda_0 \neq 0 \qquad \text{ and } \qquad \lambda_i \neq 0 \text{ for some } i > 0.$$
  We can therefore use \Cref{stmt:count-of-dissociated-coefficients} to count the number of choices for $\Lambda$ and observe that
  \begin{equation*}
    (4d + 4)^M \le \exp(3 M \log k) \le \exp\left(\frac{\log n}{(\log \log n)^2} \right) \le n^{1/\log k}
  \end{equation*}
  where the first inequality follows from the (trivial) observation that $d \le k$, and the rest is a consequence of our choice of $M = (\log n) / (\log \log n)^4$ and our assumption that $k \le (\log n)^{2 - \eta}$.

  We can now choose each of the $k - d$ elements of $X \setminus D$ by the above procedure, and obtain that there are at most
  \begin{equation}\label{eq:count-of-X-minus-D}
    n^{(k - d)/(\log k)} = n^{o(k)}
  \end{equation}
  such elements.
  Combining \eqref{eq:count-of-D} and \eqref{eq:count-of-X-minus-D} with $m \ge \delta k^2 / 10$ thus yields
  \begin{equation*}
    \big| \cX_3^{(m)} \big| \le n^{(2 + \delta + o(1)) m / k + o(k)} = n^{(2 + \delta + o(1)) m / k},
  \end{equation*}
  as required.
\end{proof}

\end{document}